\title{Coarse obstructions to cocompact cubulation}
\author{Zachary Munro}
\address{Israel Institute of Technology, Department of Mathematics, Haifa 3200003, Israel}
\email{munrozachary@campus.technion.ac.il}
\author{Harry Petyt}
\address{Mathematical Institute, University of Oxford, UK}
\email{petyt@maths.ox.ac.uk}
\setlist{nosep} % Change enumeration labelling with \begin{enumerate}[a)] etc.
    \renewcommand*{\backrefalt}[4]{\ifcase #1 (Not cited).\or (Cited p.~#2).\else (Cited pp.~#2).\fi} % Cited on...
\let\OLDthebibliography\thebibliography \renewcommand\thebibliography[1]{   % Reduce bibliography spacing
    \OLDthebibliography{#1}\setlength{\parskip}{0pt}\setlength{\itemsep}{0pt plus 0.3ex}} 
\def\subsection{\@startsection{subsection}{1}\z@{.7\linespacing\@plus\linespacing}
    {.5\linespacing}{\normalfont\scshape\centering}}\makeatother % centrally aligned subsections
\newcounter{shcount}
\newcounter{thmcount}
\newcounter{enumlabelcount}
\newcounter{claimcount}
\newcommand*{\bsh}[1]{\theoremstyle{definition}\newtheorem{subhead\theshcount}[theorem]{#1}
    \begin{subhead\theshcount}} 
\newcommand*{\esh}{\end{subhead\theshcount}\stepcounter{shcount}} % On-the-fly environments
\newcommand*{\ubsh}[1]{\theoremstyle{definition}\newtheorem*{subhead\theshcount}{#1}
    \begin{subhead\theshcount}} 
\newcommand*{\uesh}{\end{subhead\theshcount}\stepcounter{shcount}} % Unnumbered on-the-fly environments
\newcommand*{\numberedtheorem}[3]{\theoremstyle{plain}\newtheorem*{makethm\thethmcount}{#1}
    \ifthenelse{\equal{#2}{}}{\begin{makethm\thethmcount}#3\end{makethm\thethmcount}\stepcounter{thmcount}}
    {\begin{makethm\thethmcount}[#2]#3\end{makethm\thethmcount}\stepcounter{thmcount}}} % Fixed-number thms
\newcommand\enumlabel[1][]{\item[#1]
    \refstepcounter{enumlabelcount}\def\@currentlabel{#1}}\makeatother
\newenvironment{claim*}{\medskip\noindent\textbf{Claim:}\hspace{0.5mm}}{}
\newenvironment{claim*proof}{\medskip\noindent\emph{Proof of Claim.}\hspace{0.5mm}}
    {\leavevmode\unskip\penalty9999\hbox{}\nobreak\hfill\quad\hbox{$\diamondsuit$}\medskip}
\renewcommand{\th}{\ensuremath{^\mathrm{th}}}
\renewcommand{\bf}{\mathbf}
\newcommand*{\eps}{\varepsilon}
\newcommand*{\R}{\mathbf{R}}
\newcommand*{\Z}{\mathbf{Z}}
\newcommand*{\cal}{\mathcal}
\renewcommand*{\i}{\mathbf{i}}
\newcommand*{\ssm}{\smallsetminus}
\renewcommand*{\t}{\mathbf{t}}
\DeclareMathOperator{\Aut}{Aut}
\DeclareMathOperator{\dist}{\mathsf{d}}
\DeclareMathOperator{\gd}{gd}
\DeclareMathOperator{\qfrk}{qf.rk}
\DeclareMathOperator{\rk}{rk}
\DeclareMathOperator{\sepdim}{sepdim}
\DeclareMathOperator{\stab}{Stab}
\DeclareMathOperator{\supp}{Supp}
\DeclareMathOperator{\vcd}{vcd}
\DeclareMathOperator{\vgd}{vgd}
\newcommand{\ignore}[2]{\left\{\kern-.7ex\left\{#1\right\}\kern-.7ex\right\}_{#2}}
\newcommand*{\sgen}[1]{\langle#1\rangle}
\newcommand*{\mk}{\medskip}
\definecolor{harrycomment}{rgb}{0.6,0,0.4}
\definecolor{zachcomment}{rgb}{0.55,0.71,0}
\newtheorem{mthm}{Theorem} % Lettered theorems
\newtheorem{question}{Question}
\newtheorem{theorem}{Theorem}[section]
\newtheorem{lemma}[theorem]{Lemma} 
\newtheorem{corollary}[theorem]{Corollary}
\newtheorem{proposition}[theorem]{Proposition}
\theoremstyle{definition}
\newtheorem{definition}[theorem]{Definition}
\newtheorem{remark}[theorem]{Remark}
\newtheorem{example}[theorem]{Example}
\definecolor{markcomment}{rgb}{0,0.6,0.4}
\begin{document}

\begin{abstract}
We provide geometric methods to give bounds on the large-scale dimension of CAT(0) cube complexes quasiisometric to a given group $G$. In situations where these bounds conflict we obtain obstructions to $G$ being cocompactly cubulated. More strongly, the obstructions prevent $G$ from being a coarse median space.

As applications, we show that many free-by-cyclic groups cannot be cocompactly cubulated, even virtually, and prove that any tubular group with a coarse median is virtually compact special. We also exhibit a group that is CAT(0), $C(6)$, and virtually special, yet is not quasiisometric to any CAT(0) cube complex. This is the first example of a $C(6)$ group that cannot be cocompactly cubulated, resolving a question of Jankiewicz and partially answering a question of Wise. 
\end{abstract}

\maketitle

%%%%%%%%%%%%%%%%%%%%%%%%%%%%%%%%%%%%%%%%%%%%%%%%%%
\section{Introduction} \label{sec:intro}

Cubulation has proved to be an important tool in the study of finitely generated groups. That is, when studying a group $G$, if one can find an action of $G$ on a CAT(0) cube complex, then one gains access to powerful combinatorial machinery controlling the geometry of $G$. 

The unqualified term \emph{cubulation} generally refers to a proper action of $G$ on a (possibly infinite-dimensional) CAT(0) cube complex; i.e. an action where each ball contains only finitely many orbit points (with multiplicity). Stronger conclusions can be drawn from a \emph{cocompact cubulation}, namely a proper cocompact action of $G$ on a (necessarily finite-dimensional) CAT(0) cube complex. And strongest of all is for $G$ to be \emph{virtually compact special}, meaning that a finite-index subgroup of $G$ is the fundamental group of a finite cube complex that is \emph{special} in the sense of \cite{haglundwise:special}.

Many groups have been successfully (cocompactly) cubulated, and many of these are even virtually special; indeed, Agol's theorem \cite{agol:virtual} states that every hyperbolic group that can be cocompactly cubulated is virtually compact special. However, there are many groups of interest for which cocompact cubulability is unknown. For many, it is expected to be impossible; we simply lack refined criteria to verify this suspicion. In fact, all general methods for obstructing cocompact cubulation known to the authors are simply negations of properties of cocompactly cubulated groups. For instance, if $G$ has super-quadratic Dehn function, property~(T), distorted elements, or elements whose centralisers do not virtually split, then $G$ is not cocompactly cubulated \cite{nibloreeves:groups,eberlein:canonical}. 

Our goal in this article is to provide the first general obstructions to cocompact cubulation that are not a negation of some group-theoretic property. These obstructions are of a coarse-geometric nature, and in fact they provide a strong negation to the possibility of cocompact cubulation, by ruling out the existence of a \emph{coarse median}.

If $G$ acts properly cocompactly on a CAT(0) cube complex $X$, then $G$ is quasiisometric to $X$. Merely being quasiisometric to a CAT(0) cube complex (\emph{quasicubical}) is much weaker than being cocompactly cubulated; for instance, all hyperbolic groups are quasicubical \cite{haglundwise:combination}, even though some have property~(T) and hence cannot act on CAT(0) cube complexes without global fixed-points \cite{nibloreeves:groups}. If $G$ is quasicubical then one can pass the median of the cube complex along the quasiisometry to equip $G$ with a ternary operator that behaves coarsely the same. This makes $G$ a \emph{coarse median space} in the sense of \cite{bowditch:coarse}. Thus proving that a space does not admit any coarse median is much stronger than proving it cannot be cocompactly cubulated.

Roughly speaking, for each $n\ge 2$ we describe geometric configurations whose ``geometric rank'' is $n$ but that cannot appear in any space of ``median rank'' less than $n+1$; see Figure~\ref{fig:RBF}. Showing that $G$ cannot have a coarse median then amounts to finding such a configuration in $G$ ``at top rank''. This can be summarised as follows (see Theorem~\ref{thm:rbf}). By the \emph{quasiflat rank} of a metric space $X$, we mean the supremal integer $\qfrk X$ for which there is a quasiisometric embedding $\R^{\qfrk X}\to X$.

\begin{mthm} \label{mthm:obstruction}
Let $G$ be a finitely generated group with $\qfrk G\le n$. If $G$ contains a quasiisometrically embedded \emph{richly branching flat} of dimension $n$, then $G$ cannot be a coarse median space. In particular, $G$ cannot be virtually cocompactly cubulated.
\end{mthm}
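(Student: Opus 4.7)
The plan is to argue by contradiction: suppose $G$ is a coarse median space with $\qfrk G \le n$ that contains a quasi-isometrically embedded richly branching flat $F$ of dimension $n$. The overarching strategy is to show that the presence of $F$ forces the coarse median rank of $G$ to exceed $n$, contradicting the quasiflat rank hypothesis via the Bowditch correspondence between coarse median rank and quasiflat rank in coarse median spaces. Since the theorem itself says ``see Theorem~\ref{thm:rbf}'', I expect the bulk of the work to live in that sharper statement, and for the present result to be a short corollary.

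My first step would be to translate the rich branching data into finite point configurations inside $G$. One picks finitely many branches along $F$, together with sufficiently deep samples of points along each branch and along $n$ independent directions inside $F$ itself. Because coarse median structures pull back up to bounded error along quasi-isometries, the putative coarse median on $G$ assigns medians to these samples that obey the usual axioms on their median hulls. The heart of the argument is to show that such a hull must realise an $(n+1)$-cube pattern, forcing the coarse median rank to be at least $n+1$: roughly, the $n$ independent directions of $F$ must combine with a branching direction to produce $n+1$ pairwise transverse coarse walls. This is the content I expect from Theorem~\ref{thm:rbf}.

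Once one has the rank lower bound $\ge n+1$, the hypothesis $\qfrk G \le n$ is violated, because a coarse median space of rank $r$ admits a quasi-isometrically embedded $\R^r$. This contradiction establishes the coarse median claim. Virtual cocompact cubulability follows automatically: if a finite-index subgroup of $G$ acted properly cocompactly on a CAT(0) cube complex $X$, then that subgroup, and hence $G$, would be quasi-isometric to $X$ and so inherit a coarse median from the canonical median on $X$, contradicting what we have just shown.

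The main obstacle, and where the substantive technical work presumably lies, is the combinatorial-geometric step that pins down precisely how rich branching witnesses extra rank. The definition of \emph{richly branching flat} must be delicate enough that the branches genuinely produce fresh median-rank beyond that of the underlying flat, while remaining robust under the quasi-isometries used to transport $G$ onto any candidate coarse median model. Formalising this ascent from a geometric picture (branches fanning off a flat) to a median-theoretic conclusion (an extra coarse wall transverse to $n$ walls already supported by $F$) is the key difficulty, and will likely rely on careful control of the gate/projection machinery available in coarse median spaces together with a quantitative version of Bowditch's quasiflat structure theorem.
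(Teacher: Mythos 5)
Your outer reduction is sound: by Proposition~\ref{prop:rank} a finitely generated coarse median group satisfies $\rk G=\qfrk G$, so $\qfrk G\le n$ forces $\rk G\le n$, and the theorem then follows from the assertion that a coarse median space of rank at most $n$ admits no quasiisometrically embedded $n$--RBF (Theorem~\ref{thm:rbf}); your handling of the cubulation corollary is also fine. The gap is in the mechanism you propose for that key assertion. You claim the $n$ directions of the base flat ``combine with a branching direction to produce $n+1$ pairwise transverse coarse walls'', i.e.\ an $(n+1)$--cube forcing $\rk G\ge n+1$. This is false as stated: every piece of an $n$--RBF is $n$--dimensional, and a half-flat glued to a flat along a wall-parallel hyperplane is perfectly compatible with rank $n$ (for instance $T\times\R^{n-1}$ with $T$ a tripod is a rank--$n$ median space containing exactly such a configuration, and a $2$--dimensional CAT(0) cube complex can contain a plane with half-planes branching in both coordinate directions). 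No single branch, nor any number of branches, yields an $(n+1)$--cube or $n+1$ pairwise crossing walls; the obstruction cannot be a rank count.

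What actually fails is the \emph{direction} of the gluing locus relative to the median coordinates. The paper passes to an asymptotic cone, which by Lemma~\ref{lem:cone_median} is a genuine median metric space of rank at most $n$; Bowditch's quasiflat theorem shows the image of the base flat is a finite union of median-embedded $\ell^1$--panels; and Lemma~\ref{lem:diagonal_five_point} (a five-point-condition computation, not a wall count) shows that a half-flat can only branch off a median $(\R^n,\ell^1)$ along a hyperplane orthogonal to a coordinate axis. Since a panel has only $n$ coordinate directions but the RBF branches in $n+1$ pairwise independent directions, some branch must attach diagonally somewhere --- but making ``somewhere'' precise is exactly the robustness issue you flag and do not resolve: a fixed global direction can be coordinate in one panel and diagonal in another (cf.\ the cyclic union of six quarter-planes), so the paper needs Rademacher differentiability of the bilipschitz identification to locate a point where some branching direction is genuinely diagonal, and then a tangent cone (Lemma~\ref{lem:tangent_median}) to linearise the picture before applying Lemma~\ref{lem:diagonal_five_point}. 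None of this is reachable from finite point configurations and a putative extra crossing wall, so as written the proposal does not prove the theorem.
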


The motivating observation for defining richly branching flats (Definition~\ref{def:rbf}) is that in a, say, $2$--dimensional CAT(0) cube complex, half-flats can only branch off a flat in two directions, i.e. those parallel to the coordinate axes. This picture becomes muddled when considering spaces only up to quasiisometry. For example, a cyclic gluing of six quarter-planes is quasiisometric to a flat, and half-flats can branch off such an object in more than two directions. However, in a sense, this is the as complicated as it gets: quasiflats in a coarse median space are well-approximated by unions of orthants \cite{bowditch:quasiflats}. This quasiflat rigidity generalises (and strengthens) several previous theorems \cite{bestvinakleinersageev:quasiflats,huang:top,behrstockhagensisto:quasiflats}. 

As the name suggests, then, richly branching flats should be thought of as being flats with ``too much'' branching ``all over''. Similar configurations were considered in work of Haettel \cite{haettel:higher}, who characterised which symmetric spaces and affine buildings admit coarse medians (see Theorem~\ref{mthm:thomas}). However, his arguments rely on work specific to that setting, such as \cite{kleinerleeb:rigidity}.

\begin{figure}[ht] 
\includegraphics[width=6.5cm, trim = 0 3mm 0 3mm]{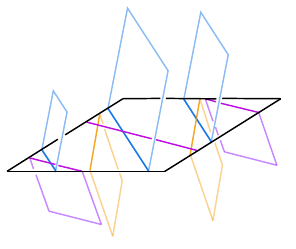} 
\caption{A 2--dimensional richly branching flat. Note that the only intersection between half-planes occurs in the base flat.} \label{fig:RBF} 
\end{figure}

Part of the proof of Theorem~\ref{mthm:obstruction} involves relating the quasiflat rank of a coarse median space $X$ to its \emph{coarse-median rank}, $\rk X$. Proposition~\ref{prop:rank} in particular shows that these two quantities agree when $X$ is a finitely generated group. In general it is not easy to ascertain the quasiflat rank of a group, but we nevertheless have the following as a consequence of Proposition~\ref{prop:rank}, which will be used in our applications of Theorem~\ref{mthm:obstruction} below. % and the general Lemma~\ref{lem:qfrk_vgd}. Recall that the \emph{virtual geometric dimension} $\vgd G$ of a group $G$ is the infimal dimension among classifying spaces of finite-index subgroups of $G$.

\begin{mthm} \label{mthm:vgd}
Let $G$ be a finitely generated group. If $G$ admits a coarse median, then $\rk G\le\vcd G$.
\end{mthm}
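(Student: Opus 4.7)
The plan is to combine Proposition~\ref{prop:rank}, which identifies $\rk G$ with the quasiflat rank $\qfrk G$ for a finitely generated coarse median space, with a standard dimension-theoretic bound on $\qfrk G$. In particular the coarse median hypothesis enters only through this identification: it then remains to prove that $\qfrk G \le \vcd G$ for any finitely generated group, and both sides of this are quasi-isometry invariants.

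Assume $\vcd G = d < \infty$, since otherwise the inequality is trivial. Pass to a torsion-free, finite-index subgroup $H \le G$ with $\cohomdim H = d$; as $H$ and $G$ are quasi-isometric, $\qfrk H = \qfrk G =: n$, and the task becomes: if $\R^n$ quasi-isometrically embeds into $H$, then $n \le d$. Since $\cohomdim H = d$, the group $H$ admits a finite-dimensional $K(H,1)$, whose universal cover $Y$ is a contractible bounded-geometry simplicial complex (essentially) of dimension $d$, on which $H$ acts freely, cocompactly, and isometrically. By the \v Svarc--Milnor lemma, $H$ is quasi-isometric to $Y$, so the quasi-isometric embedding $\R^n \hookrightarrow H$ transports to a quasi-isometric embedding $\R^n \hookrightarrow Y$.

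The last step is an asymptotic dimension comparison: quasi-isometric embeddings do not decrease $\asdim$, so $n = \asdim \R^n \le \asdim Y$, and a bounded-geometry simplicial complex of dimension $d$ satisfies $\asdim \le d$. This yields $n \le d$, as required.

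The main obstacle is the Eilenberg--Ganea gap at $\cohomdim H = 2$, where only $\gd H \le 3$ is guaranteed in general and the above argument nominally gives $n \le 3$ rather than $n \le 2$. I would handle the low-dimensional cases directly: $d = 0$ forces $H$ finite, so $\qfrk H = 0$; $d = 1$ forces $H$ virtually free, so $\qfrk H \le 1$; and $d = 2$ is handled separately by using that a group of cohomological dimension two cannot quasi-isometrically contain $\R^3$, which follows from asymptotic dimension considerations together with the structure of two-dimensional groups. For $d \ge 3$ the asymptotic dimension argument applies without modification.
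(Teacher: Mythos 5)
The reduction in your first step is exactly the paper's: Proposition~\ref{prop:rank} gives $\rk G=\qfrk G$, so everything comes down to $\qfrk G\le\vcd G$. The gap is in your final step. The claim that ``a bounded-geometry simplicial complex of dimension $d$ satisfies $\asdim\le d$'' is false: the Cayley graph of $\Z^n$ is a bounded-geometry $1$--dimensional complex with $\asdim=n$. Topological dimension simply does not bound asymptotic dimension from above. Contractibility of the universal cover does not rescue the argument either: for $Y$ the universal cover of a finite aspherical complex, the inequality $\asdim Y\le\dim Y$ is equivalent to $\asdim H\le\gd H$, which is not a standard fact but a well-known open problem (Dranishnikov's question comparing asymptotic and cohomological dimension); it is known only for special classes such as hyperbolic groups, polycyclic groups, and cocompactly cubulated groups. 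So the inequality $n\le d$ does not follow from ``asymptotic dimension considerations,'' and your treatment of the $d=2$ case inherits the same unproved input. A secondary issue: finite cohomological dimension does not give a \emph{cocompact} free action on a finite-dimensional contractible complex --- that requires $H$ to be of type $F$, a finiteness property you never establish (the paper derives type $F_\infty$ from the coarse median hypothesis via connectivity of asymptotic cones, and then works with $G\times\Z$ to get type $F$).

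The paper closes this step entirely differently: it quotes Sauer's theorem that a group containing a quasi-isometrically embedded $\R^n$ has $\cohomdim\ge n$, a genuinely nontrivial result about quasi-isometry invariance of homological dimension. It also offers a self-contained substitute (Proposition~\ref{lem:qfrk_vgd}) bounding $\qfrk$ by geometric dimension via higher-order homological Dehn functions: a quasiflat $\R^{d+1}$ in a $d$--dimensional contractible complex yields an equatorial $(d-1)$--cycle whose two hemispherical fillings must agree as chains (there being no $(d+1)$--cells), contradicting the fact that one filling contains cells far from the other. To repair your proof you would need to replace the asymptotic-dimension step with an input of this kind.
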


Although this bound is not optimal in general, as can be seen from hyperbolic manifolds, it can be useful in concrete situations where cohomological dimensions are easily computable.

\mk

Let us now discuss the applications of Theorem~\ref{mthm:obstruction} considered in this paper. The two main classes of groups that we consider are \emph{free-by-cyclic} groups and \emph{tubular} groups.

Free-by-cyclic groups form a heavily-studied class, with rich behaviours in accordance with the theory of free-group automorphisms. (In this paper, all free group kernels are assumed to be finitely generated.) Among the numerous known properties of free-by-cyclic groups, we find that they pass the ``obvious'' tests for cocompact cubulability: they have quadratic isoperimetric functions \cite{bridsongroves:quadratic} and their abelian subgroups are undistorted \cite{button:aspects}, for instance.

It is therefore natural to ask which free-by-cyclic groups can be cocompactly cubulated. As it turns out, all hyperbolic free-by-cyclic groups are cocompactly cubulated, by work of Hagen--Wise \cite{hagenwise:cubulating:irreducible,hagenwise:cubulating:general}. (More generally, all hyperbolic \emph{hyperbolic}-by-cyclic groups are \cite{dahmanikrishnamutanguha:hyperbolic}.) Also, Hagen--Przytycki characterised which graph manifold groups are cocompactly cubulated \cite{hagenprzytycki:cocompactly}, and some of those are free-by-cyclic. Gersten's group \cite{gersten:automorphism} (and tweaks thereof) does not act properly on a $\mathrm{CAT}(0)$ space, for reasons involving translation-length, and so cannot be cocompactly cubulated. There are also examples that can be cubulated, but not cocompactly \cite{wuye:some}. Though cocompact cubulability is expected to be rare amongst non-hyperbolic free-by-cyclic groups, little seems to be known in general. For example, the following question is open. 

\begin{question}
Are toral relatively hyperbolic free-by-cyclic groups cocompactly cubulated? 
\end{question}

Using Theorem~\ref{mthm:obstruction}, we show that many free-by-cyclic groups cannot be cocompactly cubulated; indeed they cannot even admit a coarse median. The following is Theorem~\ref{thm:two_linear}.

\begin{mthm} \label{mthm:fbz_no_median}
If a free-by-cyclic group has \emph{rich linearity},
% If a free-by-cyclic group $G$ (virtually) is represented by an improved relative train track such that some Nielsen cycle either
% \begin{itemize}
% \item   supports at least three internal linear strata, or
% \item   supports two internal linear strata and has a nearby source,
% \end{itemize} 
then it  cannot admit a coarse median.
\end{mthm}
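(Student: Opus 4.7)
The strategy is to argue by contradiction using Theorem~\ref{mthm:obstruction}. Suppose $G = F \rtimes_\phi \integers$ admits a coarse median. The plan is to produce inside $G$ a quasiisometrically embedded richly branching flat whose dimension matches $\qfrk G$, forcing a contradiction.

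First, we bound the rank from above. Since $G$ acts freely cocompactly on the mapping torus of a graph automorphism representing $\phi$, an aspherical $2$-complex, we have $\vcd G \le 2$. Theorem~\ref{mthm:vgd} then gives $\rk G \le 2$, and Proposition~\ref{prop:rank} upgrades this to $\qfrk G \le 2$. It therefore suffices to embed a $2$-dimensional richly branching flat into $G$.

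For the construction, the hypothesis of rich linearity should supply a large family of pairwise-transverse elements $w_0, w_1, \ldots, w_k \in F$ that grow linearly under iteration of $\phi$, each of which yields (after passing to a suitable power and conjugate) a quasiisometrically embedded $\integers^2$ of the form $\langle w_i, t\rangle$ in $G$, all sharing the common $\langle t\rangle$-axis. Taking $\langle w_0, t\rangle$ as the base flat and, for each $i \ge 1$, attaching the positive half of $\langle w_i, t\rangle$ along the $t$-axis produces a candidate richly branching flat in the sense of Definition~\ref{def:rbf}.

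The main obstacle lies in verifying the combinatorial hypotheses of that definition: that the assembled object quasiisometrically embeds in $G$, that half-planes indexed by distinct $w_i$'s meet only coarsely in the base flat, and that there are enough branching directions to qualify as \emph{rich}. The first two points should follow from a train-track or Bass--Serre description of $G$, where pairwise non-commensurable linear elements have divergent orbits in a free splitting of $F$, so their associated flats diverge away from $\langle t\rangle$; the third is precisely what rich linearity is designed to guarantee. Once the configuration is in place, Theorem~\ref{mthm:obstruction} delivers the contradiction.
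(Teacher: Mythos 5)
Your reduction is the right frame and matches the paper's: geometric dimension two plus Theorem~\ref{mthm:vgd} (in the body, Corollary~\ref{thm:dimension}) gives rank at most two, so it suffices to exhibit a quasiisometrically embedded $2$--RBF and invoke Theorem~\ref{thm:rbf}. The gap is in the construction of the RBF, and it is a genuine one. First, an element $w\in F$ that grows \emph{linearly} under $\phi$ does not commute with the stable letter, so $\sgen{w,t}$ is not $\Z^2$ and gives no flat at all. The flats come from \emph{Nielsen cycles} $p$, i.e.\ loops with $f_\#(p)=p$: the base flat is the mapping-torus quasiflat $Q$ of such a $p$ (Item~\ref{sh:nielsen_tori}), while the linearly growing strata play a completely different role --- they supply the branching half-flats, not the flats themselves.

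Second, and more seriously, in your candidate configuration every half-plane is attached along the single line $\sgen{t}$. That is not a richly branching flat: Definition~\ref{def:rbf} requires half-flats glued along \emph{coarsely dense families of parallel lines} in at least $n+1=3$ pairwise non-parallel directions. A bouquet of half-planes hanging off one common axis is entirely compatible with having a coarse median (compare $F_2\times\Z$, which is cocompactly cubulated), so no contradiction would follow from it. The paper obtains the three directions as follows: an internal linear stratum with suffix $p^{n}$ attaches half-quasiflats to $Q$ along lines of slope $\tfrac1n$ relative to the $(p,t)$--axes (Items~\ref{sh:linear_gluings} and~\ref{sh:branching}); distinct linear strata have distinct suffixes, hence non-parallel gluing lines; and a nearby source contributes branching along fibre lines. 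Rich linearity is exactly the hypothesis that three such pairwise non-parallel families exist, coarse density of each family comes from the cocompact $\Z^2$--action on $Q$, and quasiisometric embeddedness is Proposition~\ref{prop:undistorted} (undistortion of free-by-cyclic subgroups), not a transversality argument in a free splitting. You would need to rebuild your construction along these lines for the argument to close.
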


The rough idea of the rich linearity condition (see Definition~\ref{def:rich}) in Theorem~\ref{mthm:fbz_no_median} is as follows. The free-by-cyclic group $G=F\rtimes_\phi\Z$ virtually has an \emph{improved relative train track} structure (from \cite{bestvinafeighnhandel:tits:1}; see Item~\ref{sh:irtt}), which provides a systematic way to build up $G$ in layers known as \emph{strata}. The simplest strata are those that are fixed by $\phi$; these can give rise to $\Z^2$ subgroups of $G$. The translates of other strata by powers of $\phi$ grow in complexity. When this growth is linear, this can give rise to quasiflats that branch off the above $\Z^2$ subgroups. The rich linearity condition enforces that this happens enough to find a richly branching flat and employ Theorem~\ref{mthm:obstruction}.

% The way to think about Theorem~\ref{mthm:fbz_no_median} is as follows. The \emph{improved relative train track} structure (from \cite{bestvinafeighnhandel:tits:1}; see Item~\ref{sh:irtt}) provides a systematic way to build up $G=F\rtimes_\phi\Z$ in layers known as \emph{strata}. The simplest strata are those that are fixed by $\phi$; these can give rise to $\Z^2$ subgroups of $G$. More generally, \emph{Nielsen cycles} are loops in the train track that are fixed. Next simplest are the \emph{linear} strata: roughly, the translates of a linear stratum by powers of $\phi$ grow in complexity at a linear rate. These can give rise to quasiflats that branch off the above $\Z^2$ subgroups. The terms ``internal'' and ``source'' ensure that this happens (Definition~\ref{def:internal}). The conditions of Theorem~\ref{mthm:fbz_no_median} can therefore be understood as saying that there is some quasiflat in $G$ with three directions of branching, which allows the obstructions of Theorem~\ref{mthm:obstruction} to kick in.

The class of free-by-cyclic groups with rich linearity is larger than those for which Gersten's ideas rule out being CAT(0); a simple example that can be shown to be CAT(0) is provided in Example~\ref{eg:more_than_gersten}. 

In the positive direction, one could more leniently ask which free-by-cyclic groups are quasicubical. For instance, it turns out (either by \cite{kapovichleeb:3manifold} and \cite{hagenprzytycki:cocompactly}, or by \cite{hagenrussellsistospriano:equivariant} and \cite{petyt:mapping}) that all graph manifolds are quasicubical, even those that cannot be cocompactly cubulated. More generally, we show the following. See Proposition~\ref{prop:few_linear} for a more precise statement.

\begin{mthm} \label{mthm:at_most_one}
If a free-by-cyclic group $G$ is represented by an improved relative train track with no quadratically growing strata and such that each Nielsen cycle supports at most one linear stratum, then $G$ is virtually a colourable hierarchically hyperbolic group, and in particular is quasicubical.
\end{mthm}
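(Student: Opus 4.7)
\emph{Proof proposal.} The plan is to construct a colourable hierarchically hyperbolic structure on a finite-index subgroup of $G$ and then invoke the result of Hagen--Russell--Sisto--Spriano \cite{hagenrussellsistospriano:equivariant} that colourable HHGs are quasicubical. After passing to finite index, assume $G=F\rtimes_\phi\Z$ is represented by an IRTT $f\colon\Theta\to\Theta$ with no quadratic strata and at most one linear stratum per Nielsen cycle. Enumerate the Nielsen cycles supporting a linear stratum as $c_1,\dots,c_k$, and, with $t$ a generator of the $\Z$-factor, set $P_i=\sgen{c_i,t}$. Passing to a further finite-index subgroup so that each $c_i$ is genuinely $\phi$-fixed, each $P_i$ is isomorphic to $\Z^2$.

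The heart of the argument is to establish that $G$ is hyperbolic relative to $\{P_1,\dots,P_k\}$. The no-quadratic hypothesis guarantees that all polynomial geometry in $G$ is concentrated in linear strata, while the at-most-one-per-Nielsen-cycle hypothesis prevents multiple linear strata from accumulating on a common cycle to produce flats of dimension greater than two; exponentially growing strata then contribute the ``hyperbolic'' directions via train-track contraction dynamics. Together these should yield a Gromov-hyperbolic coned-off Cayley graph satisfying the bounded coset penetration property, in the spirit of existing rel-hyp results for mapping tori of free-group automorphisms, e.g.~\cite{button:aspects}.

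With the relative hyperbolicity in hand, the combination theorem for groups hyperbolic relative to HHG peripherals equips $G$ with an HHG structure whose $\nest$-maximal domain is the coned-off Cayley graph and whose remaining domains arise from the cosets of the peripheral $\Z^2$s (each of which carries the trivial HHG structure). Colourability of the resulting structure is a feature of HHGs built from relatively hyperbolic groups with well-behaved peripherals and is verified within the framework of \cite{hagenrussellsistospriano:equivariant}, from which the quasicubicality conclusion then follows.

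The main obstacle is the relative-hyperbolicity step: translating the IRTT bookkeeping -- in particular, the way linear Nielsen cycles interact with the exponential strata above them -- into a rigorous verification of bounded coset penetration. The remaining ingredients, namely the HHG combination theorem, the colourability verification, and the invocation of HRSS, are essentially standard once relative hyperbolicity has been secured.
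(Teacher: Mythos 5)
Your central step---that $G$ is hyperbolic relative to the $\Z^2$ subgroups $P_i=\sgen{c_i,t}$---is false in general, not merely unverified. The correct relative hyperbolicity statement (Dahmani--Li, Ghosh) is that $G$ is hyperbolic relative to its \emph{maximal polynomially-growing sub-mapping-tori}, and under your hypotheses these peripherals are mapping tori of linearly growing restrictions of $\phi$: groups of the form $F'\rtimes\Z$ with $F'$ free of rank at least two, not $\Z^2$'s. Indeed, a linear stratum $e$ with suffix a power of a Nielsen cycle $p$ already generates, together with $p$ and the stable letter, a non-elementary linearly growing sub-mapping-torus, and such groups (like the vertex groups $F'\times\Z$ that arise) are thick/wide and cannot be hyperbolic relative to any collection of $\Z^2$'s. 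In the extreme case where $\phi$ itself is linearly growing, your proposed peripheral structure would assert that a unipotent mapping torus is hyperbolic relative to finitely many $\Z^2$'s, which fails (these groups have quadratic divergence and are thick of order one). So no amount of work on bounded coset penetration will close this gap; the coned-off space you describe is not hyperbolic.

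The paper's route is genuinely different after the first reduction. It does use Dahmani--Li/Ghosh plus the fact that colourable HHG is preserved under relative hyperbolicity to reduce to the case where $\phi$ is linearly growing---but at that point there is no further relative hyperbolicity available. Instead, the parabolic orbits theorem of Cohen--Lustig gives a canonical splitting of $F$ whose mapping torus expresses (a finite-index subgroup of) $G$ as a graph of groups with $F'\times\Z$ vertex groups and $\Z^2$ edge groups, and the real content is verifying that this graph of groups is \emph{admissible} in the sense of Croke--Kleiner. This is precisely where the ``at most one linear stratum per Nielsen cycle'' hypothesis enters: it forces adjacent edge groups to be non-commensurable and ensures the centres of the two incident vertex groups generate a finite-index subgroup of each edge group. (Your heuristic that the hypothesis prevents ``flats of dimension greater than two'' is off the mark---$G$ has geometric dimension two regardless; the hypothesis rules out the branching needed for admissibility to fail, cf.\ the richly branching flats of Theorem~\ref{mthm:fbz_no_median}.) One then applies the Hagen--Russell--Sisto--Spriano theorem for admissible graphs of groups to get a colourable HHG, and quasicubicality comes from \cite[Thm~B]{petyt:mapping}, not from \cite{hagenrussellsistospriano:equivariant}.
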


For free-by-cyclic groups of linear growth, the combination of Theorems~\ref{mthm:fbz_no_median} and~\ref{mthm:at_most_one} leaves quite a restricted set of possible train tracks. The following is related to a possible converse to Theorem~\ref{mthm:fbz_no_median}; see Remark~\ref{sh:converse} for more discussion and related questions. Note that such a converse would imply that rich linearity is witnessed by all fibrations.

\begin{question} \label{qn:fbz_qq}
Is it true that a free-by-cyclic group is virtually hierarchically hyperbolic if and only if it does not have rich linearity? Less strictly, if a free-by-cyclic group has no quasiisometrically embedded richly branching flats, must it be quasiisometric to a finite-dimensional CAT(0) cube complex?
\end{question}

Our other main application of Theorem~\ref{mthm:obstruction} is to tubular groups. A group is \emph{tubular} if it can be written as a graph of groups with $\Z^2$ vertices and $\Z$ edges. This simple description belies the remarkably varied behaviour that tubular groups display. Indeed, the class includes: Gersten's group that is not a subgroup of any CAT(0) group \cite{gersten:automorphism}; CAT(0) groups that have quadratically diverging rays but no super-quadratically diverging rays \cite{gersten:quadratic}; non-Hopfian CAT(0) groups \cite{wise:nonhopfian}; Croke--Kleiner groups, which have CAT(0) structures with differing visual boundaries \cite{crokekleiner:spaces}; and Brady--Bridson groups, which fill the isoperimetric spectrum \cite{bradybridson:there}. 

%In \cite{wise:cubular}, Wise gives an exact characterisation of which tubular groups admit free actions on CAT(0) cube complexes. These cubulations, which were further understood by Woodhouse \cite{woodhouse:classifying:finite,woodhouse:classifying:virtually}, are rarely cocompact. We use richly branching flats to prove a kind of rigidity result for cubulations of tubular groups. 

% \begin{mthm} \label{mthm:tubular_special}
% If a tubular group admits a coarse median, then it is virtually compact special.
% \end{mthm}

We use richly branching flats to prove a rigidity result for cubulations of tubular groups (Theorem~\ref{thm:tubular_special}). 

\begin{mthm} \label{mthm:tubular_special}
A tubular group admits a coarse median if and only if it is cocompactly cubulated and virtually compact special.
\end{mthm}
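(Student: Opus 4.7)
The ``if'' direction is immediate: a cocompact cubulation yields a quasiisometry to a CAT(0) cube complex, whose median transfers along the quasiisometry to a coarse median on $G$, so virtual specialness is not needed here. For ``only if'', suppose $G = \pi_1\mathcal{G}$ is a tubular group with $\Z^2$ vertex groups and $\Z$ edge groups, admitting a coarse median. Discard the trivial case where $G$ is virtually abelian, so that $\vcd G = 2$. Theorem~\ref{mthm:vgd} gives $\rk G \le 2$; combined with Proposition~\ref{prop:rank} and the fact that the vertex $\Z^2$s are quasiisometrically embedded, this forces $\qfrk G = 2$, and Theorem~\ref{mthm:obstruction} then forbids quasiisometrically embedded $2$--dimensional richly branching flats in $G$.

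The heart of the proof is to convert this no-RBF hypothesis into Wise's equitable-sets condition for cubulating tubular groups. Let $T$ be the Bass-Serre tree of $\mathcal{G}$. At each vertex $v \in T$ with stabilizer $V_v \cong \Z^2$, each incident edge $e$ contributes a $\Z$-subgroup whose image in $V_v$ determines a slope, and the tree of spaces contains a half-plane branching from the $V_v$-flat across $e$ into the neighbouring vertex space. Translates of $e$ by $V_v$ give infinitely many parallel copies of this half-plane along lines of the given slope. I would show that if the finite set of slopes occurring at $v$ cannot be equitably separated by a finite family of lines in $V_v$, then selecting branches in at least two distinct slope directions assembles a $2$--dimensional richly branching flat quasiisometrically embedded in $G$, contradicting Theorem~\ref{mthm:obstruction}. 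Applied uniformly across $T$, this establishes the equitable-sets condition at every vertex.

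With equitable sets in hand, Wise's construction gives a proper action of $G$ on a finite-dimensional CAT(0) cube complex, and the same no-RBF control promotes this to a cocompact action. Virtual compact specialness then follows from the known specialness results for cocompactly cubulated tubular groups (ultimately resting on Agol-style separability in the relatively hyperbolic setting, via Woodhouse). The main obstacle I anticipate is the translation step: extracting a genuine quasiisometrically embedded richly branching flat from a local failure of equitable sets, with rich branching across distinct slope directions, and doing so uniformly at every vertex of $T$.
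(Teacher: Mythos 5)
Your ``if'' direction and the opening reductions ($\vcd G=2$, hence $\rk G\le2$ by Theorem~\ref{mthm:vgd}, hence no quasiisometrically embedded $2$--RBF) match the paper. But the rest has two genuine gaps. First, you assert as a ``fact'' that the vertex $\Z^2$s are quasiisometrically embedded. This is false for tubular groups in general (Brady--Bridson groups are tubular with badly distorted vertex groups), and establishing undistortedness is a substantive part of the paper's argument: one uses Bowditch's theorem that coarse median spaces have quadratic isoperimetric functions together with Theorem~\ref{thm:distortion} (distorted tubular groups have super-quadratic Dehn function), whose proof occupies Section~\ref{subsec:disorted}. Undistortedness is needed twice downstream: to make the half-flats branching off a vertex flat genuinely quasiisometrically embedded (otherwise no RBF is produced), and to exclude Baumslag--Solitar subgroups $\operatorname{BS}(m,n)$ with $m\ne\pm n$, which is part of the hypothesis of Wise's cocompact cubulation criterion.

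Second, your target condition is the wrong one. Wise's equitable-sets condition characterises \emph{free} actions on CAT(0) cube complexes, not proper cocompact ones, and a failure of equitable sets is not what an RBF detects. The paper's Example~\ref{eg:c6tubular} is a direct counterexample to your strategy: that group has an equitable set (it acts freely on a CAT(0) cube complex), yet it has a $2$--RBF and admits no coarse median, so your plan of ``establish equitable sets, then promote to a cocompact action using no-RBF control'' cannot work as stated. The condition one actually extracts from the absence of RBFs is that each vertex group has at most two commensurability classes of incident edge groups (Lemma~\ref{lem:three_edges}): three pairwise non-commensurable incident edge lines give three pairwise non-parallel branching directions in the flat, which is exactly the threshold for a $2$--RBF --- note that two distinct slope directions, as in your sketch, are not enough, since half-flats legitimately branch off a flat in two directions inside a $2$--dimensional cube complex. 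With ``at most two commensurability classes per vertex'' and ``no $\operatorname{BS}(m,n)$, $m\ne\pm n$'' in hand, Corollaries~5.9 and~5.10 of Wise's paper give cocompact cubulation and virtual compact specialness in one step; no separate promotion argument is needed.
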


In \cite{wise:cubular}, Wise characterised which tubular groups admit free actions on CAT(0) cube complexes. These cubulations were further investigated by Woodhouse \cite{woodhouse:classifying:finite,woodhouse:classifying:virtually}. Wise also showed that all cocompactly cubulated tubular groups are virtually compact special.

As an instance of this characterisation, we construct a tubular group that has a $C(6)$ structure but does not admit any coarse median (Example~\ref{eg:c6tubular}). This is the first example of a $C(6)$ group that does not act properly cocompactly on any $\mathrm{CAT}(0)$ cube complex, answering a question asked by Jankiewicz \cite[Q.~6.6.4]{jankiewicz:cubical} and suggested by Wise \cite{wise:cubulating}. 

\begin{mthm} \label{mthm:example}
There exists a group $G$ that is $C(6)$, $\mathrm{CAT}(0)$, and virtually special, yet does not admit any coarse median. In particular, $G$ is not virtually cocompactly cubulated. 
\end{mthm}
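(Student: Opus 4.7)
The plan is to realise such a $G$ as the fundamental group of an explicit tubular graph of groups and then invoke Theorem~\ref{mthm:tubular_special} to upgrade failure of cocompact cubulation to the stronger statement that no coarse median exists. The work therefore splits into two parts: exhibiting a concrete candidate $G$ and checking the four properties on it.

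First, I would construct $G$ as the fundamental group of a small, carefully chosen graph of groups with $\Z^2$ vertex groups and $\Z$ edge groups. The edge inclusions should be picked so that: the standard presentation (free bases of each $\Z^2$ together with stable letters whose relators encode commutation and the edge amalgamations) satisfies $C(6)$ by direct inspection of piece lengths; the presentation complex carries a piecewise-Euclidean metric for which vertex links have girth at least $2\pi$, giving the $\mathrm{CAT}(0)$ structure (the vertex spaces are flat tori glued along geodesic loops); and the criteria of \cite{wise:cubular} for virtual specialness apply, producing an action on a (possibly non-cocompact) special cube complex. All of this is relatively soft and combinatorial once the graph of groups is fixed.

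The substantive step is arranging that the \emph{same} example fails to be cocompactly cubulated. For this, I would make sure the edge subgroups in the various $\Z^2$ vertex groups generate a configuration to which Wise's equitable-set criterion from \cite{wise:cubular} cannot apply; equivalently, one identifies a pattern of lines in the associated graph of spaces for which no finite collection of walls can simultaneously cross every tube the requisite number of times. Concretely this is done by choosing slopes in the various $\Z^2$ vertex groups that interact incompatibly around a cycle in the underlying graph. Given this, Theorem~\ref{mthm:tubular_special} immediately forces $G$ to admit no coarse median, and a fortiori no virtual cocompact cubulation.

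The main obstacle will be the simultaneous satisfaction of all four conditions on one example: tightening edge inclusions to defeat Wise's equitable-set condition tends to introduce short overlaps that spoil $C(6)$, while thinning them out to restore small cancellation tends to re-permit an equitable set. The art lies in picking slopes and multiplicities so that the obstruction to cubulation sits in the global structure of the graph of groups (an incompatibility around a cycle) rather than in the local geometry of individual edge spaces, since only the former is invisible to the $C(6)$, $\mathrm{CAT}(0)$, and virtual specialness checks. Once such an example is pinned down, no further rank calculation is needed on our end: the heavy lifting of converting non-cubulability into the non-existence of a coarse median is carried out entirely by Theorem~\ref{mthm:tubular_special}, which itself ultimately rests on the richly branching flat machinery of Theorem~\ref{mthm:obstruction}.
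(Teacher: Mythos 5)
Your high-level frame (realise $G$ as a tubular group and lean on the tubular-group theorem) matches the paper, but the key step is misidentified, and as written the argument has a gap. The mechanism the paper actually uses is \emph{local} and goes in the opposite direction to yours: Lemma~\ref{lem:three_edges} shows that an \emph{undistorted} tubular group with a single vertex group carrying three pairwise non-commensurable incident edge groups contains a quasiisometrically embedded $2$--RBF; combined with $\rk G\le \vcd G=2$ and Theorem~\ref{thm:rbf}, this directly rules out a coarse median, and non-cocompact-cubulability is then the \emph{corollary}. You instead propose to first prove that $G$ is not cocompactly cubulated by defeating ``Wise's equitable-set criterion'' and then apply Theorem~\ref{mthm:tubular_special} in the contrapositive. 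That contrapositive is logically available, but (i) you give no actual route to non-cocompact-cubulability --- indeed the point of the paper is that no previously known obstruction applies to this example, so this step cannot be waved through; (ii) saying the equitable-set criterion ``cannot apply'' is inconsistent with your other requirements, since the example must \emph{have} an equitable set (it acts freely on a CAT(0) cube complex, which is how virtual specialness is obtained via Wise and Woodhouse); and (iii) your proposed source of incompatibility, ``slopes that interact incompatibly around a cycle,'' is the mechanism for \emph{distortion} (an unbalanced cycle in the distortion graph, Lemma~\ref{lem:balanced_undistortion}), which would give a super-quadratic Dehn function and destroy the CAT(0) property you also need. The example must be undistorted, with the obstruction concentrated at one vertex.

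For comparison, the paper's example (Example~\ref{eg:c6tubular}) takes the $1$--skeleton of a tetrahedron, $3$--edge-coloured, with edge groups $\langle(1,0)\rangle$, $\langle(0,1)\rangle$, $\langle(1,-1)\rangle$ according to colour: every vertex then has three pairwise non-commensurable incident edge groups and the group is undistorted, so Lemma~\ref{lem:three_edges} applies; free cubulability and virtual specialness come from Wise and Woodhouse; CAT(0) comes from the choice of slopes; and the $C(6)$ structure is exhibited explicitly by tiling each vertex torus with hexagons (subdivided to dodecagons) and building each edge annulus from four hexagons. Your proposal correctly anticipates the tension between the small-cancellation check and the cubulation obstruction, but deferring both the $C(6)$ verification and, more importantly, the entire non-coarse-median mechanism leaves the substantive content of the theorem unproved.
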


Theorem~\ref{mthm:example} contrasts with the situation for $C'(\frac16)$ groups, which are cocompactly cubulated \cite{wise:cubulating}. It is still open whether there is some $n$ for which all $C(n)$ groups are cocompactly cubulated. It also contrasts with the \emph{strict $C(6)$} condition introduced in \cite{munrowise:srict}: the authors show that strict $C(6)$ groups are hyperbolic relative to virtually-$\Z^2$ subgroups, and they are therefore quasicubical.

In their work on the isoperimetric spectrum \cite{bradybridson:there}, Brady--Bridson established a relationship between distortion and isoperimetry in a parametrised family of tubular groups with one vertex group and two edge groups. Along the way to Theorem~\ref{mthm:tubular_special}, we show the following (Theorem~\ref{thm:distortion}).

\begin{mthm} \label{mthm:tubular_distortion}
A tubular group contains a distorted element if and only if it has super-quadratic Dehn function.
\end{mthm}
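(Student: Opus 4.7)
I would prove the two implications separately. For the direction $(\Rightarrow)$, suppose $G$ contains a distorted element $g$. Since loxodromic elements of the Bass-Serre tree have positive translation length and so are undistorted, $g$ must fix a vertex of the Bass-Serre tree; after conjugation we may assume $g\in V_0$ for some vertex group $V_0\cong\Z^2$. Choose $b\in V_0$ so that $\{g,b\}$ is a basis of $V_0$, whence $[g,b]=1$. Write $f(n)\defeq |g^n|_G$, so $f(n)=o(n)$ by distortion. Fix words $w_n$ of length $f(n)$ over the generators of $G$ with $w_n=_G g^n$, and form the null-homotopic loops
\[
  \gamma_{n,m}\defeq w_n\, b^m\, w_n^{-1}\, b^{-m}
\]
of length $2(f(n)+m)$. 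The plan is to prove $\Area_G(\gamma_{n,m})\succeq nm$; then taking $m=f(n)$ gives loops of length $\asymp f(n)$ of area $\succeq nf(n)=f(n)^2\cdot(n/f(n))$, which forces $\delta_G$ to be super-quadratic since $n/f(n)\to\infty$.

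Establishing $\Area_G(\gamma_{n,m})\succeq nm$ is the heart of the forward direction. In $V_0$ alone the commutator $[g^n,b^m]$ has area exactly $nm$, so one must show that the graph-of-groups relators of $G$ do not provide a shortcut. Assuming $\langle b\rangle$ is not commensurable with any edge-group image in $V_0$---which can be arranged by choice of basis, since the distortion of $g$ singles out at most one edge-slope as ``active''---no edge relator shrinks $b$-powers, so the $b$-direction is isolated. I would formalise this either by a Bridson--Gersten style signed-area invariant on van Kampen diagrams, or more geometrically by projecting to the Bass-Serre tree of $G$ and reading off a $b$-height from a compatible family of sections.

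For the direction $(\Leftarrow)$ I would prove the contrapositive: if $G$ has no distorted element then $\delta_G$ is quadratic. Every edge subgroup of $G$ is cyclic and by hypothesis undistorted, so each edge inclusion is a quasi-isometric embedding into its ambient vertex group and into $G$. Since the vertex groups $\Z^2$ have quadratic Dehn function, a standard combination theorem for Dehn functions over graphs of groups with quasi-isometrically embedded edge subgroups---proved by the tree-of-spaces and disc-replacement technique---yields that $G$ itself has quadratic Dehn function.

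The main technical obstacle is the lower bound $\Area_G(\gamma_{n,m})\succeq nm$: one needs a signed-area invariant on van Kampen diagrams over $G$ that vanishes on all edge relators yet detects the $V_0$-commutator, and some bookkeeping is required when every primitive choice of $b$ ends up commensurable with an edge image (in which case one passes to a different distorted element or a related commuting pair inside $V_0$). The backward combination argument, while routine in spirit, also requires care to verify that the cyclic undistorted inclusions into $\Z^2$ suffice to upgrade the Dehn functions across the full graph of groups.
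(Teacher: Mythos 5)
Your reduction to the bound $\Area_G(w_n b^m w_n^{-1}b^{-m})\gtrsim nm$ is a reasonable strategy, and you rightly flag that bound as the heart of the matter, but neither of your proposed routes to it works as stated, and the bound is not a formality: the general principle ``a distorted element of a $\Z^2$ subgroup forces a super-quadratic Dehn function'' is false (for instance $\mathrm{SL}_5(\Z)$ has quadratic Dehn function yet exponentially distorted unipotents), so the tubular structure must do real work precisely at this step. The signed-area/cocycle route requires a $2$--cocycle, bounded on the cells of a fixed presentation complex, evaluating to about $nm$ on every filling of your loop; the natural candidate is a cup product of characters dual to $g$ and $b$, but the distortion of $g$ in a tubular group comes from an unbalanced cycle of edge identifications, which forces $g$ to die in $H_1(G;\mathbf Q)$ --- the very character you need does not exist. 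The ``$b$-height/corridor'' route is closer to viable, but since the edge subgroups of $V_0$ need not be powers of your basis elements, $b$ occurs in the edge relators, the corridors leak out of the vertex space, and controlling them amounts to redoing the geodesic analysis the shortcut was meant to avoid. A further structural worry: your loop has two long sides ($b^{\pm m}$) lying \emph{inside} the flat, so the fillings of the two distortion bigons can re-enter the flat and a priori cancel against the parallelogram.

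The paper proceeds quite differently. It works with excursion decompositions of geodesics relative to a vertex flat, shows that a distorted flat forces a distorted edge-line, and encodes the edge identifications in a labelled \emph{distortion graph}, which must contain an unbalanced cycle. It then splits into cases: if each flat has at most one parallelism class of distorted lines, the unbalanced cycle produces conjugate edge-strips glued along a common line and hence embedded Baumslag--Solitar-type discs, giving an \emph{exponential} lower bound; otherwise some flat carries two non-parallel distorted lines, and the loop is assembled from four excursions bounding an embedded rhombus. In both cases the area lower bound comes not from a cocycle but from the fact that $X$ is a contractible $2$--complex, so the $2$--chain filling a loop is unique and must contain the exhibited embedded disc --- and the loops are chosen to meet the flat only in finitely many points precisely so that this argument isolates the disc. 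Your backward direction is at the same level of detail as the paper's one-line remark, but note that the combination theorem you invoke is not off the shelf: one needs the vertex flats, not merely the cyclic edge groups, to be undistorted, and it is the excursion lemma that upgrades the one to the other.
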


\mk

Finally, as noted above, the configurations considered in \cite{haettel:higher} are instances of richly branching flats. One can therefore use the arguments of this paper to recover the following result.

\begin{mthm}[{\cite[Thm~C]{haettel:higher}}] \label{mthm:thomas}
Let $X$ be a symmetric space of non-compact type, or a thick affine building. There exists a coarse median on $X$ if and only if the spherical type of $X$ is $A_1^n$.
\end{mthm}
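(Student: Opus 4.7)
I would prove the two directions separately, with the non-trivial content being necessity.

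For the \emph{if} direction, spherical type $A_1^n$ means $X$ splits as a metric product of $n$ rank-one factors. Each factor is either a Gromov hyperbolic rank-one symmetric space of non-compact type or a thick tree, and each such space admits a coarse median (trees have a genuine median, and rank-one symmetric spaces carry Bowditch's coarse hyperbolic centroid). The coordinatewise ternary operation then gives a coarse median on $X$.

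For the \emph{only if} direction I would prove the contrapositive. After splitting off the rank-one factors, one may assume $X$ is irreducible of rank $n \geq 2$ with spherical type $\neq A_1$. The plan is to produce a quasiisometrically embedded richly branching flat of dimension $n$ in $X$ and then invoke (the metric-space statement underlying) Theorem~\ref{mthm:obstruction}, i.e.\ Theorem~\ref{thm:rbf}. The equality $\qfrk X = n$ follows from classical quasiflat rigidity for symmetric spaces and Euclidean buildings \cite{kleinerleeb:rigidity}.

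To build the richly branching flat, fix a maximal flat $F \subset X$ and a special point $p \in F$, and let $\Sigma$ be the set of walls of $F$ through $p$. Because the spherical Weyl group is irreducible of rank $\geq 2$ and not of type $A_1$, the arrangement $\Sigma$ is not an orthogonal frame of $n$ hyperplanes, so its walls realise strictly more than $n$ distinct directions and cannot be aligned to any single orthant decomposition of $F$. For each $H \in \Sigma$, thickness of the building (respectively, the semisimple isometry group in the symmetric-space case) yields a half-flat $K_H \subset X$ bounded by $H$ and meeting $F$ exactly along $H$. The pieces $F$ and $\{K_H : H \in \Sigma\}$ can be chosen so that distinct $K_H, K_{H'}$ meet only inside $F$ --- for buildings by the convex intersection property of apartments together with thickness, for symmetric spaces by the flat strip theorem applied to geodesic rays transverse to the walls inside $K_H$ and $K_{H'}$. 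The resulting union is a richly branching flat in the sense of Definition~\ref{def:rbf} and embeds quasiisometrically in $X$ because each constituent piece is isometrically embedded with controlled pairwise intersections.

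The main obstacle is executing the intersection control in the symmetric-space case, where branching is continuous rather than combinatorial. A clean workaround is to build the richly branching flat combinatorially in the asymptotic cone of $X$, which by \cite{kleinerleeb:rigidity} is a discrete Euclidean building of the same rank and spherical type; a standard rescaling argument, routine in coarse-median theory, then produces the required configuration inside $X$ itself at sufficient scale and allows one to apply Theorem~\ref{thm:rbf} to conclude that $X$ admits no coarse median.
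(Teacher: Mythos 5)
The overall route---produce a richly branching flat at top rank and apply Theorem~\ref{thm:rbf}---is exactly how the paper intends Haettel's theorem to be recovered, and your ``if'' direction and your construction in the irreducible case are essentially right. However, the step ``after splitting off the rank-one factors, one may assume $X$ is irreducible of rank $n\ge2$'' is a genuine gap. Theorem~\ref{thm:rbf} obstructs a coarse median on $X$ only when the RBF has dimension at least $\rk X$, and $\rk X=\qfrk X=n$ is the rank of the \emph{whole} space. If $X=X_1\times X_2$ with $X_1$ irreducible of rank $k\ge2$ and $k<n$, a $k$--RBF in $X_1$ gets you nothing directly: you cannot apply Theorem~\ref{thm:rbf} to $X_1$ alone, since it is not known that a coarse median on a metric product restricts to a coarse median on a factor; and crossing the $k$--RBF with a maximal flat of $X_2$ yields an $n$--dimensional base flat with only $k+1$ branching directions, short of the $n+1$ pairwise independent directions that Definition~\ref{def:rbf} requires and that the pigeonhole step in the proof of Theorem~\ref{thm:rbf} actually uses.

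The fix is to not reduce at all: work with a maximal flat $F\cong\R^n$ of $X$ itself. The parallelism classes of walls of $F$ correspond to the root hyperplanes, and their number exceeds $n$ precisely when the spherical type is not $A_1^n$, i.e.\ when some irreducible factor has rank at least $2$. Each class is realised along a coarsely dense family of singular hyperplanes in $F$, and each such hyperplane bounds a half-flat leaving $F$ (by thickness in the building case, and by the existence of distinct maximal flats through a singular hyperplane in the symmetric-space case), so $F$ supports an $n$--RBF directly. Two further cautions. Your closing ``workaround'' is not viable: the asymptotic cone of a higher-rank symmetric space is a \emph{non-discrete} affine building, and there is no rescaling argument transporting a configuration from the cone back into $X$---the logic of Theorem~\ref{thm:rbf} runs in the opposite direction; fortunately it is also unnecessary, since the singular half-flats exist concretely in $X$. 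And the hypothesis $\rk X\le n$ of Theorem~\ref{thm:rbf} is obtained from Proposition~\ref{prop:rank}, which requires a cocompact isometry group; this is automatic for symmetric spaces but needs to be checked or replaced for exotic thick affine buildings.
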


We observe that this result of Haettel answers negatively a question asked by Wise in the 2014 ICM proceedings \cite[Prob.~13.40]{wise:cubical}, because it provides CAT(0) spaces with proper cocompact group actions that are not quasiisometric to CAT(0) cube complexes. Theorem~\ref{mthm:example} provides a new example, and indeed one can use Theorem~\ref{mthm:tubular_special} to produce many such examples. 

\ubsh{Acknowledgements}
We are indebted to Naomi Andrew for several very useful discussions about free-by-cyclic groups. We are very grateful to Mark Hagen and Monika Kudlinska for helpful conversations about train tracks. We thank Jason Behrstock, Pritam Ghosh, Rob Kropholler, Jean Pierre Mutanguha, and Xiaolei Wu for comments on an earlier version. We also thank the organisers of the 2023 thematic program on geometric group theory in Montreal, where this work began.
\uesh

%%%%%%%%%%%%%%%%%%%%%%%%%%%%%%%%%%%%%%%%%%%%%%%%%%
\section{Preliminaries} \label{sec:prelims}

%%%%%%%%%%%%%%%%%%%%%%%%%%%%%%
\subsection{Ultralimits}

\begin{definition}[Ultrafilter]
An \emph{ultrafilter} $\omega$ on $\mathbf N$ is a set of subsets of $\mathbf N$ satisfying:
\begin{itemize}
    \item If $A\in \omega$ and $A\subset B$, then $B\in \omega$.
    \item If $A,B\in \omega$, then $A\cap B\in \omega$.
    \item For all $A\subset S$, either $A\in \omega$ or $S\ssm A\in \omega$. 
\end{itemize}
The ultrafilter $\omega$ is \emph{non-principal} if it contains no finite sets.
\end{definition}

One can think of sets in $\omega$ as having measure 1, and those not in $\omega$ as having measure 0. We shall work only with non-principal ultrafilters, so we refer to them simply as ``ultrafilters''.

\begin{definition}[Ultralimit]
Given a sequence $(x_n)\in \mathbf R$, if there exists $x\in \mathbf R$ such that $\{n : |x_n-x|<\epsilon\}\in \omega$ for every $\epsilon>0$, then we call $x$ the \emph{ultralimit} of $(x_n)$ and write $x=\lim_\omega x_n$. Every sequence has at most one ultralimit. Let $(X_n,\dist_n)$ be a sequence of metric spaces with basepoints $b_n\in X_n$. The \emph{ultralimit} $\lim_\omega(X_n,b_n)$ can be defined as the metric quotient of the pseudometric space whose elements are sequences $(x_n)$, with $x_n\in X_n$, such that $\lim_\omega\dist_n(x_n,b_n)$ exists, and whose pseudometric is $\hat\dist_\omega((x_n),(y_n))=\lim_\omega\dist_n(x_n,y_n)$.
\end{definition}

Every ultralimit of metric spaces is complete \cite[Lem.~I.5.53]{bridsonhaefliger:metric}. One often considers ultralimits where the terms in the sequence are all derived from the same metric space. For instance, if $X$ is a proper metric space, then $X$ can be written as the ultralimit of a sequence of nested balls; $X=\lim_\omega(B_X(b,n),b)$. Two other important cases are \emph{asymptotic cones} and \emph{tangent cones}.

\begin{definition}[Cones]
Let $X$ be a metric space, and let $b\in X$. Let $(\lambda_n)$ be a sequence of real numbers. If $\lambda_n\to 0$, then we call $\hat X=\lim_\omega(X,\lambda_n\dist,b)$ an \emph{asymptotic cone} of $X$; it is independent of the choice of $b$. If $\lambda_n\to\infty$, then we call $T_bX=\lim_\omega(X,\lambda_n\dist,b)$ a \emph{tangent cone} of $X$ at $b$.
% Let $X$ be a metric space with basepoint $b$, and let $(\lambda_n)$ be a sequence of positive numbers converging to 0. The \emph{asymptotic cone} $\hat X$ of $X$ relative to $\omega$ and $(\lambda_n)$ is the ultralimit $\lim_\omega(X,\lambda_n\dist,b)$. This is independent of the choice of $b$, but not of $\omega$ or $(\lambda_n)$, although we suppress the notation. If the same construction is carried out with $\lambda_n\to\infty$, then the ultralimit is the \emph{tangent cone} $T_bX$ of $X$ at $b$. This depends on $b$.
\end{definition}

\begin{lemma} \label{lem:qi_induces_bilipschitz}
Let $X$ and $Y$ be metric spaces and let $(\lambda_n)$ be a sequence converging to 0. Any quasiisometric embedding $f:X\to Y$ induces a bilipschitz embedding of asymptotic cones $\hat f:\hat X\to\hat Y$.
\end{lemma}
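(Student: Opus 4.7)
The plan is to define $\hat f$ on representatives by $\hat f([(x_n)]) = [(f(x_n))]$, using $f(b)$ as the basepoint of $\hat Y$ (possible since $\hat Y$ does not depend on the basepoint). First I would fix constants $L \geq 1$, $C \geq 0$ for which
$$\tfrac{1}{L} \dist_X(x,x') - C \leq \dist_Y(f(x), f(x')) \leq L \dist_X(x,x') + C$$
holds for all $x,x' \in X$. Then I would unwind the three things that need checking: that the sequence $(f(x_n))$ lies in the pseudometric space defining $\hat Y$, that the assignment descends to the metric quotient, and that the resulting map is bilipschitz.

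For the first point, whenever $\lim_\omega \lambda_n \dist_X(x_n, b)$ exists (and is finite), we have
$$\lambda_n \dist_Y(f(x_n), f(b)) \leq L \lambda_n \dist_X(x_n, b) + \lambda_n C,$$
and since $\lambda_n \to 0$ the additive term vanishes in the ultralimit, so $\lim_\omega \lambda_n \dist_Y(f(x_n), f(b))$ exists. For the second, if $(x_n)$ and $(x_n')$ represent the same point of $\hat X$, i.e. $\lim_\omega \lambda_n \dist_X(x_n, x_n') = 0$, the same inequality applied to $(x_n, x_n')$ shows $\lim_\omega \lambda_n \dist_Y(f(x_n), f(x_n')) = 0$, so the image sequences also represent the same point.

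For the bilipschitz property, I would apply the two sides of the quasiisometry inequality to pairs $(x_n, y_n)$, multiply by $\lambda_n$, and take $\lim_\omega$; the constant terms $\lambda_n C$ vanish, leaving
$$\tfrac{1}{L}\, \hat\dist_\omega^X((x_n),(y_n)) \; \leq \; \hat\dist_\omega^Y(\hat f(x_n), \hat f(y_n)) \; \leq \; L\, \hat\dist_\omega^X((x_n),(y_n)).$$
This simultaneously proves injectivity and the bilipschitz bound with constant $L$.

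There is no real obstacle here; the only subtlety is the choice of basepoint in $Y$. If one prefers an arbitrary basepoint $b' \in Y$ rather than $f(b)$, one needs to observe that translation by $f(b)$ versus $b'$ changes the cone by an isometry, which is standard from the basepoint-independence already cited from \cite{bridsonhaefliger:metric}. With that in hand the construction is routine.
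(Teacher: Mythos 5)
Your proposal is correct and follows the same route as the paper's own (much terser) proof: define $\hat f$ on representatives by $[(x_n)]\mapsto[(f(x_n))]$, and pass the quasiisometry inequalities to the ultralimit, where the additive constants are killed by the factors $\lambda_n\to0$. The paper leaves the existence-of-limit and bilipschitz checks as "essentially the same argument"; you have simply written them out.
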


\begin{proof}
Given $x=(x_n)\in\hat X$, let $\hat f(x)=(f(x_n))$. The map $\hat f$ is well defined, because if $(x_n)=(z_n)$, then $\lim_\omega\lambda_n\dist_X(x_n,z_n)=0$, and hence $\lim_\omega\lambda_n\dist_Y(f(x_n),f(z_n))=0$ because $f$ is a quasiisometry. Essentially the same argument shows that $\hat f$ is bilipschitz.
\end{proof}

%%%%%%%%%%%%%%%%%%%%%%%%%%%%%%
\subsection{Medians}

\begin{definition}[Median algebra] \label{def:median}
A \emph{median algebra} is a set $M$ with a ternary operation $\mu$ satisfying:
\[
\mu(a,a,x)=a, \quad \mu(a,b,x)=\mu(a,x,b)=\mu(x,a,b), \quad \mu(a,b,\mu(x,y,z))=\mu(\mu(a,b,x),\mu(a,b,y),z)
\]
for all $a,b,x,y,z\in M$. The latter equality is called the \emph{five-point condition}.
\end{definition}

It can be useful to interchangeably think of  $\mu$ both as a median operator and as giving ``projection'' maps $\mu(a,b,\cdot)$ from $M$ to the ``hull'' of $\{a,b\}$. For instance, the five-point condition can (almost) be described by the slogan ``the projection of the median is the median of the projections''. 

For a subset $A$ of a median algebra $M$, let $J(A)=\{\mu(a,a',x)\,:\,a,a'\in A,\, x\in M\}$. The subset $A$ is \emph{median-convex} if $J(A)=A$. The median-convex hull of a subset $A$ is the intersection of all median-convex subsets containing $A$. If $M$ has rank $n$, then the median-convex hull of $A$ can be obtained as $J^n(A)$ (see \cite[Prop.~8.2.3]{bowditch:median:book}, for instance).

\begin{definition}[Median morphism, rank]
If $(M,\mu)$ and $(N,\nu)$ are median algebras, then a map $f:M\to N$ is a \emph{median morphism} if $f\mu(x,y,z)=\nu(fx,fy,fz)$ for all $x,y,z\in M$. The \emph{rank} of a median algebra $M$, denoted $\rk M$, is the supremal $n$ such that there is a median monomorphism $\{0,1\}^n\to M$.
\end{definition}

A \emph{wall} in a median algebra $M$ is a partition of $M$ into two nonempty, median-convex subsets, called \emph{halfspaces}. Two walls are said to \emph{cross} if all four \emph{quarterspaces} (intersections of halfspaces) are nonempty. According to \cite[Prop.~6.2]{bowditch:coarse}, the rank of a median algebra is equal to the supremal cardinality of a set of pairwise crossing walls.

\begin{definition}[Median metric space]
A metric space $(X,\dist)$ is a \emph{median metric space} if for every $x_1,x_2,x_3\in X$ there is a unique point $\mu$ such that $\dist(x_i,x_j)=\dist(x_i,\mu)+\dist(\mu,x_j)$ for all $i\ne j$.
\end{definition}

One basic example of a median metric space is a \emph{panel}, i.e. a direct product of a finite number of nontrivial closed intervals in $[0,\infty)$, equipped with the $\ell^1$-metric and thus the component-wise median.

It can be shown that the map $(x_1,x_2,x_3)\mapsto\mu$ makes $X$ into a median algebra \cite{sholander:medians}. Moreover, this map is 1--Lipschitz in each factor: we have $\dist(\mu(x,y,z),\mu(x,y,z'))\le\dist(z,z')$, and similarly for the other factors by symmetry of $\mu$. One simple consequence is the following, rather crude, estimate.

\begin{lemma} \label{lem:ball_hull}
Let $X$ be a median metric space of rank $n$. For every $x\in X$ and every $r$, the median-convex hull of the ball $B_X(x,r)$ is contained in the ball $B_X(x,2^nr)$.
\end{lemma}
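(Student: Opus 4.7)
The plan is to argue by induction on the iterates of the hull operator $J$. Recall that the paper has already stated (citing \cite[Prop.~8.2.3]{bowditch:median:book}) that because $X$ has rank $n$, the median-convex hull of any subset $A$ is obtained as $J^n(A)$, where $J(A)=\{\mu(a,a',y):a,a'\in A,\,y\in X\}$. So it suffices to prove by induction on $k\geq 0$ that
\[
J^k(B_X(x,r)) \;\subseteq\; B_X(x,2^k r).
\]
The base case $k=0$ is trivial, so all the work is in the inductive step, which I would carry out as follows.

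Suppose the containment holds for $k$, and let $z\in J^{k+1}(B_X(x,r))$. By definition of $J$, there are $a,a'\in J^k(B_X(x,r))$ and $y\in X$ with $z=\mu(a,a',y)$, and by the inductive hypothesis $\dist(x,a),\dist(x,a')\leq 2^k r$. The key tools are the two facts emphasised just above the lemma: first, $\mu$ is $1$--Lipschitz in each factor; second, $\mu$ is symmetric in all three arguments, and $\mu(a,a,x)=a$ gives $\mu(x,x,y)=x$. I would then swap arguments to $x$ one at a time and use the triangle inequality:
\[
\dist(\mu(a,a',y),x)\;=\;\dist(\mu(a,a',y),\mu(x,x,y))\;\leq\;\dist(\mu(a,a',y),\mu(x,a',y))+\dist(\mu(x,a',y),\mu(x,x,y)).
\]
Applying $1$--Lipschitzness in the first and second coordinates bounds these two terms by $\dist(a,x)$ and $\dist(a',x)$ respectively, giving $\dist(z,x)\leq 2\cdot 2^k r=2^{k+1}r$. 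This closes the induction and, taking $k=n$, yields the desired containment for the hull.

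There is not really a hard step here: the only subtlety is making sure to use both the $1$--Lipschitz property and the full symmetry of $\mu$ (so that one may Lipschitz-swap in arguments other than the last), together with the identity $\mu(x,x,y)=x$. No further properties of median metric spaces are needed, and the bound $2^n$ arises naturally as the factor of $2$ incurred at each of the $n$ iterations of $J$.
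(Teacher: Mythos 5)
Your proof is correct and is essentially the same as the paper's: both arguments reduce to showing $J(B_X(x,r))\subseteq B_X(x,2r)$ via the identity $\mu(x,x,y)=x$, the triangle inequality, and the $1$--Lipschitz property of $\mu$ in each coordinate, then iterate. The paper simply states the reduction to a single application of $J$ rather than writing out the induction explicitly, but the computation is the same one you give.
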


\begin{proof}
It suffices to show that $J(B_X(x,r))\subset B_X(x,2r)$. But this holds because if $x_1,x_2\in B_X(x,r)$ and $z\in X$, then $\dist(x,\mu(x_1,x_2,z))\le\dist(x,\mu(x,x,z))+\dist(x,x_1)+\dist(x,x_2)\le2r$.
\end{proof}

Every complete, connected median metric space is geodesic (see \cite[Lem.~13.3.2]{bowditch:median:book}, for instance). Since the completion of any median metric space is also a median metric space, we shall always implicitly assume that our median metric spaces are complete.

We say that a subset $Y$ of a metric space $X$ is \emph{$r$--separated} if $\dist(y_1,y_2)\ge r$ for every $y_1,y_2\in Y$.

\begin{lemma} \label{lem:tangent_median}
If $(X,\mu)$ is a connected median metric space of rank $n$ and $p\in X$, then every tangent cone of $X$ at $p$ is a connected median metric space of rank at most $n$.
\end{lemma}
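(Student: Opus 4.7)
The plan is to verify that the tangent cone $T_pX = \lim_\omega(X,\lambda_k\dist,p)$ inherits each of the three required properties from $X$. Given three points $\xi^i = [x^i_k] \in T_pX$, I would define the candidate median coordinatewise by $\mu_\omega(\xi^1,\xi^2,\xi^3) := [\mu(x^1_k,x^2_k,x^3_k)]$. This is well defined because $\mu$ is $1$-Lipschitz in each coordinate on $X$ (observed in the excerpt following Definition~\ref{def:median}), and the resulting sequence represents a genuine element of $T_pX$ because $\dist(\mu(x^1_k,x^2_k,x^3_k),p) \le \dist(x^1_k,p) + \dist(x^1_k,x^2_k)$, using that $\mu(x^1_k,x^2_k,x^3_k)$ lies on the interval from $x^1_k$ to $x^2_k$. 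Scaling by $\lambda_k$ keeps this bounded. The median defining equalities for $\mu_\omega$ pass from $X$ to $T_pX$ by taking ultralimits.

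For the uniqueness clause of Definition~\ref{def:median}, suppose $\eta = [y_k]$ also satisfies the median equations for $(\xi^1,\xi^2,\xi^3)$. The non-negative defects $\delta^{ij}_k := \dist(x^i_k,y_k) + \dist(y_k,x^j_k) - \dist(x^i_k,x^j_k)$ satisfy $\lim_\omega \lambda_k \delta^{ij}_k = 0$. A quantitative comparison in the median metric space $X$, using that $X$ has genuine unique medians together with the $1$-Lipschitz property of $\mu$, should yield a bound $\dist(y_k, \mu(x^1_k,x^2_k,x^3_k)) \le C\sum_{ij}\delta^{ij}_k$, forcing $\eta = \mu_\omega(\xi^1,\xi^2,\xi^3)$ after scaling. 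Connectedness of $T_pX$ follows because complete median metric spaces are geodesic (cited in the excerpt before Lemma~\ref{lem:ball_hull}), and ultralimits of geodesic spaces are geodesic.

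The main obstacle is the rank bound. Suppose for contradiction that $T_pX$ admits a median monomorphism $\iota\colon\{0,1\}^{n+1}\hookrightarrow T_pX$, with vertices $\zeta_v = [z^v_k]$ obeying $\mu_\omega(\zeta_u,\zeta_v,\zeta_w) = \zeta_{m(u,v,w)}$, where $m$ denotes coordinatewise median. These relations hold in $X$ only approximately: $\lambda_k\dist(\mu(z^u_k,z^v_k,z^w_k), z^{m(u,v,w)}_k) \to 0$ along $\omega$. Since rank is an algebraic invariant, the existence of approximate cubes does not immediately contradict $\rk X \le n$. To bridge the gap, I would argue dually via the characterization of rank as the supremal cardinality of pairwise-crossing walls (Bowditch's Proposition 6.2, cited in the excerpt): the cube $\iota$ produces $n+1$ pairwise-crossing walls in $T_pX$, and I would lift these to $n+1$ pairwise-crossing walls in $X$ for $\omega$-almost every $k$, contradicting the rank hypothesis on $X$.

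The hardest step will be the wall-lifting argument. My plan is to use the cube vertices themselves as witnesses for non-emptiness of quarterspaces: for each pair of coordinate walls $W_i,W_j$ in $T_pX$, the $2^{n+1}$ cube vertices already sample all four quarterspaces, so the representative sequences $z^v_k$ in $X$ sample corresponding four regions for $\omega$-almost every $k$. To promote these into honest halfspaces in $X$, I would use the standard description of halfspaces via gate-projection maps onto intervals, which are $1$-Lipschitz and hence pass well to and from ultralimits (cf.\ Lemma~\ref{lem:qi_induces_bilipschitz}). This should certify the existence of $n+1$ pairwise-crossing walls in $X$ separating the approximate cube at $\omega$-almost every $k$, yielding the desired contradiction.
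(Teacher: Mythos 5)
Your construction of the limit median $\mu_\omega$, its well-definedness via the $1$--Lipschitz property, and the connectedness argument all match the paper's proof, and your quantitative stability estimate for uniqueness is a reasonable substitute for the paper's ``simple computation''. The problem is the rank bound, which you correctly identify as the hard step but do not actually carry out. Your plan is to find $n+1$ pairwise-crossing walls in $T_pX$ and then ``lift'' them to walls of $X$ via gate projections; but a wall of the ultralimit has no canonical preimage at a finite stage, and at stage $k$ the points $z^v_k$ form only an \emph{approximate} cube (the median relations hold up to an error that is $o(1/\lambda_k)$ but need not vanish), so nothing you have written certifies that any actual wall of $X$ separates the relevant clusters of points. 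This is precisely where the quantitative content of the lemma lives, and ``gate-projection maps onto intervals pass well to ultralimits'' does not supply it.

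The paper closes this gap as follows: fix a single index $m$ in the $\omega$-large set where the cube vertices are $\tfrac{r}{2\lambda_m}$--separated \emph{and} the median defects are at most $\tfrac{r}{2^{n+3}\lambda_m}$; around each coordinate vertex $x^{e_i}_m$ ($i=0,\dots,k$) take the median-convex hull $B_i$ of a ball of radius $\tfrac{r}{10n\lambda_m}$, which by Lemma~\ref{lem:ball_hull} stays inside a ball of radius $\tfrac{r}{8\lambda_m}$, so the $B_i$ are pairwise disjoint; then Roller's separation theorem (\cite[Thm~2.7]{roller:poc}) produces a wall $h_i$ of $X$ separating $B_0$ from $B_i$. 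Pairwise crossing of the $h_i$ is witnessed exactly by your ``cube vertices sample all four quarterspaces'' idea, made precise via the observation that $\mu(x^{e_0}_m,x^{e_i}_m,x^q_m)$ lands in $B_i$ if and only if the $i$\th{} coordinate of $q$ is $1$. So your crossing-verification heuristic is the right one, but you need to replace the wall-lifting step with a direct construction of walls in $X$ from disjoint convex sets, together with the explicit separation-versus-defect bookkeeping that makes the disjointness and the membership statements true.
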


\begin{proof}
Let $(\lambda_n)$ be a sequence with $\lambda_n\to\infty$, and let $T_pX$ be the corresponding tangent cone of $X$ at $p$. Given points $x^1=(x^1_n)$, $x^2=(x^2_n)$, and $x^3=(x^3_n)$ in $T_pX$, set $\mu'(x^1,x^2,x^3)=\lim_\omega\mu(x^1_n,x^2_n,x^3_n)$. The fact that $\mu$ is 1--Lipschitz in each factor implies that $\mu'$ is independent of the choice of representatives of the $x^i$, similarly to the proof of Lemma~\ref{lem:qi_induces_bilipschitz}. 

For each $i\ne j$ we have $\dist(x^i_n,x^j_n)=\dist(x^i_n,\mu(x^1_n,x^2_n,x^3_n))+\dist(\mu(x^1_n,x^2_n,x^3_n),x^j_n)$, so in the ultralimit we get $\dist(x^i,x^j)=\dist(x^i,\mu'(x^1,x^2,x^3))+\dist(\mu'(x^1,x^2,x^3),x^j)$. In particular, $\mu'$ produces a well-defined point of $T_pX$ satisfying the desired equalities. A simple computation shows that any point satisfying those equalities must actually be $\mu'(x^1,x^2,x^3)$, which shows that $(T_pX,\mu')$ is a median metric space.

Since $X$ is complete and connected, it is geodesic. As an ultralimit of a geodesic spaces, $T_pX$ is geodesic \cite[Prop.~3.4]{kapovichleeb:onasymptotic}. In particular, it is connected. It remains to bound~$\rk T_pX$. 

Suppose that there exists a median monomorphism $f:\{0,1\}^k\to T_pX$. Let us write $Q=\{0,1\}^k$, and $\mu_Q$ for its median operator. For each $q\in Q$, let $(x^q_n)$ be a sequence in $X$ representing $f(q)$. Since $f(Q)$ is a finite median subalgebra of $T_pX$, this must be captured by the approximating sequences. More concretely, for every $\eps>0$ we must have 
\[
M_\eps \,=\, \big\{m\in\mathbf N \,:\, 
    \lambda_m\dist\big(\mu(x^{q_1}_m,x^{q_2}_m,x^{q_3}_m),\,x^{\mu_Q(q_1,q_2,q_3)}_m\big)
    <\eps \text{ for all } q_1,q_2,q_3\in Q\big\} \,\in\,\omega.
\]
Let $r>0$ be such that $f(Q)$ is $r$--separated. For every $\eps>0$, we also have
\[
N_\eps \,=\, \{m\in\mathbf N \,:\, \{x^q_m\,:\,q\in Q\} \text{ is } 
    \frac{r-\eps}{\lambda_m}\text{-- separated}\} \,\in\,\omega.
\]
In particular, there exists $m\in N_{\frac r2}\cap M_{\frac r{2^{n+3}}}$, because $\omega$ is an ultrafilter. 

For $i\in\{1,\dots,k\}$, let $e_i$ denote the point in $Q$ with $i\th$ coordinate 1 and all other coordinates 0, and let $e_0$ denote the point $(0,\dots,0)$. For $i\in\{0,\dots,k\}$, let $B_i$ denote the median-convex hull of the ball $B_X(x^{e_i}_m,\frac r{10n\lambda_m})$. By Lemma~\ref{lem:ball_hull}, we have $B_i\subset B_X(x^{e_i}_m,\frac r{8\lambda_m})$. Since the set $\{x^q_m\,:\,q\in Q\}$ is $\frac r{2\lambda_m}$--separated, the $B_i$ are pairwise disjoint. By \cite[Thm~2.7]{roller:poc}, for each $i\in\{1,\dots,k\}$ there is a wall $h_i$ of $X$ separating $B_0$ from $B_i$. The $h_i$ must cross pairwise. Indeed, for $q\in Q$ the median $\mu(x^{e_0}_m,x^{e_i}_m,x^q_m)$ lies in $B_i$ if and only if the $i\th$ coordinate of $q$ is 1, so the crossing of the $h_i$ is witnessed by the set $\{x^q_m\,:\,q\in Q\}$. This shows that $k\le n$.
\end{proof}

In work on the asymptotic cones of mapping class groups \cite{behrstockminsky:dimension}, Behrstock--Minsky introduced a notion of dimension, later called \emph{separation dimension} by Bowditch \cite{bowditch:coarse}, that is a simple tweak on the more standard notion of \emph{inductive dimension} \cite{hurewiczwallman:dimension,engelking:theory}.

\begin{definition}[Separation dimension]
Let $Y$ be a Hausdorff topological space. The \emph{separation dimension} of $Y$ is defined inductively as follows. 
\begin{itemize}
\item   If $Y=\varnothing$, then $\sepdim Y=-1$.
\item   Otherwise, $\sepdim Y\le n$ if for each distinct $x,y\in Y$ there exist closed subsets $A,B\subset Y$ with $x\not\in B$, $y\not\in A$, and $Y=A\cup B$, such that $\sepdim(A\cap B)\le n-1$.
\end{itemize}
\end{definition}

Let $X$ be a metric space. By definition, $\sepdim X$ is always bounded above by the inductive dimension of $X$, which in turn is equal to the \emph{topological} (or \emph{covering}) dimension of $X$ by the Kat\v etov--Morita theorem \cite{katetov:ondimension,morita:normal} (see also \cite[Thm~4.1.3]{engelking:theory}). If $X$ is proper, then $\sepdim X$ is equal to the topological dimension \cite[\S III.6]{hurewiczwallman:dimension}. This is not true in general: the rational points of Hilbert space have separation dimension zero but topological dimension one \cite{erdos:dimension}. In our setting we have the following.

\begin{lemma}[{\cite[Cor.~3.7]{haettel:higher}}] \label{lem:sepdim_is_rank}
If $X$ is a connected median metric space, then $\sepdim X=\rk X$. % and [Thm~G] there is a median, bilipschitz embedding of $[0,1]^{\rk X}$ into $X$, with median-convex image.
\end{lemma}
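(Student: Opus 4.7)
The plan is to prove the two inequalities $\rk X\le\sepdim X$ and $\sepdim X\le\rk X$ separately, the second by induction on the rank via walls.

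For $\rk X\le\sepdim X$: given a median monomorphism $f\colon\{0,1\}^n\to X$, I would consider the median-convex hull $C$ of the image $f(\{0,1\}^n)$. Since $X$ is connected (hence geodesic, as noted in Section~2) and median, one expects $C$ to be isometric, up to coordinatewise reparametrisation, to an $n$--panel, i.e.\ a product of $n$ nontrivial closed intervals with the $\ell^1$ metric. Granting this, $C$ is a closed subspace of $X$ homeomorphic to $[0,1]^n$, which has separation dimension $n$ because on compact metric spaces separation dimension coincides with topological dimension. Since $\sepdim$ is monotone under closed subspaces, this gives $\sepdim X\ge n$.

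For $\sepdim X\le\rk X$: I would induct on $n=\rk X$. The base case $n=0$ is immediate, since a connected median metric space with no pair of crossing walls reduces to a point. For the inductive step, fix distinct $x,y\in X$ and apply \cite[Thm~2.7]{roller:poc} to obtain a wall $\{H_1,H_2\}$ of $X$ with $x\in H_1$ and $y\in H_2$. Set $A=\overline{H_1}$ and $B=\overline{H_2}$, so that $A\cup B=X$. Using that medians are 1--Lipschitz in each factor, one verifies that median-convex subsets of $X$ are closed, so $A\cap H_2=\varnothing$ and $B\cap H_1=\varnothing$, giving $x\in A\ssm B$ and $y\in B\ssm A$. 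The key structural claim is that $A\cap B$ is itself a connected median metric space whose rank is at most $n-1$: any collection of $k$ pairwise crossing walls in $A\cap B$ extends to a collection of $k+1$ pairwise crossing walls of $X$ by adjoining $\{H_1,H_2\}$, forcing $k+1\le n$. Invoking the inductive hypothesis on $A\cap B$ then gives $\sepdim(A\cap B)\le n-1$, completing the step.

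The main obstacle will be verifying the structural properties used in the inductive step, most notably that $A\cap B$ is connected and inherits a compatible median structure; these are the subtle geometric inputs beyond the combinatorics of walls. Connectedness of $A\cap B$ should follow by using geodesics and the gate projections associated with the halfspaces, but it is the load-bearing fact of the argument. A secondary obstacle lies in the lower bound, where one must actually exhibit the embedded panel rather than merely a median-embedded cube; this requires ``spreading'' the discrete cube to an honest Euclidean box within $X$, which relies on geodesicity together with the fact that between any two comparable vertices of $f(\{0,1\}^n)$ one can find a geodesic segment lying inside the relevant halfspace intersection, using standard facts from \cite{bowditch:median:book}.
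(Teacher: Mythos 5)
The paper does not give a proof of this lemma; it is imported wholesale as \cite[Cor.~3.7]{haettel:higher}, so there is no in-paper argument to compare against. Evaluating your proposal on its own merits, the second inequality has a fatal flaw.

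You assert that ``median-convex subsets of $X$ are closed,'' and use this to conclude $A=\overline{H_1}=H_1$, $B=\overline{H_2}=H_2$, hence $A\cap B=\varnothing$. This assertion is simply false: in $\R$ the halfspace $(-\infty,0)$ is median-convex but not closed, and more tellingly, if both halfspaces of a wall were closed then $X=H_1\sqcup H_2$ would be a partition of $X$ into two nonempty closed sets, contradicting connectedness. Your own argument then proves too much: it would yield $\sepdim X\le 0$ for \emph{every} connected median metric space with at least two points, which contradicts the lower bound you establish in the first half. The 1--Lipschitz property of $\mu$ shows that \emph{closures} of convex sets are convex, not that convex sets are closed.

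Once this is corrected, $A\cap B=\overline{H_1}\cap\overline{H_2}$ is a nonempty frontier, and several new obligations appear that the write-up does not discharge. First, you need $x\notin\overline{H_2}$ and $y\notin\overline{H_1}$, which Roller's separation of the singletons $\{x\}$, $\{y\}$ does not give you (the wall could have $x$ on the frontier); you should instead separate small median-convex hulls of balls about $x$ and $y$, as the paper itself does in the proof of Lemma~\ref{lem:tangent_median}. Second, the ``key structural claim'' that $\rk(A\cap B)\le n-1$ is justified by saying that any $k$ pairwise crossing walls of $A\cap B$ extend to walls of $X$ crossing $\{H_1,H_2\}$. This is not automatic: a wall of the subalgebra $A\cap B$ is a partition into subsets that are convex \emph{in} $A\cap B$, not in $X$, and there is no canonical extension. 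One can make this work via gate projections onto $A\cap B$ and pulling back, but that machinery has to be set up and the crossing with $\{H_1,H_2\}$ verified; as written, this is a genuine gap rather than a routine check.

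The first inequality is closer to working, but also rests on an unproven ``expectation'': that the median-convex hull of a median-embedded $n$-cube in a connected median metric space is isometric to an $n$-panel. You are right that this is where geodesicity and the median structure must be used, but flagging it as a ``secondary obstacle'' understates its role — it is the entire content of that direction, and without it you do not yet have an embedded copy of $[0,1]^n$ to hand to the dimension-theoretic machinery.
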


%%%%%%%%%%%%%%%%%%%%%%%%%%%%%%
\subsection{Coarse medians}

\begin{definition}[Quasimedian map]
Let $X$ and $Y$ be metric spaces equipped with ternary operators $\mu_X$ and $\mu_Y$, respectively. A map $f:X\to Y$ is said to be \emph{$q$--quasimedian} if $\dist_Y(f\mu_X(x,y,z),\mu_Y(fx,fy,fz))\le q$ for all $x,y,z\in X$.
\end{definition}

The following definition can be thought of as a higher-rank version of Gromov's tree approximation lemma for hyperbolic spaces \cite{gromov:hyperbolic}.

\begin{definition}[Coarse median space]
Let $X$ be a metric space. A \emph{coarse median} on $X$ is a ternary operator $\mu:X\to X$ such that there is some sequence $(h_n)$ with the following properties.
\begin{itemize}
\item   $\mu$ is $h_0$--coarsely Lipschitz in each factor.
\item   For each finite subset $A\subset X$ there is a finite median algebra $M$ with an $h_{|A|}$--quasimedian map $\iota:M\to X$ and a map $o:A\to M$ such that $\dist(\iota o(a),a)\le h_{|A|}$ for all $a\in A$.
\end{itemize}
We call $(X,\mu)$ a \emph{coarse median space}. If every $M$ can be chosen to have rank at most $n$, then we say $\mu$ has rank at most $n$, writing $\rk\mu\le n$. We write $\rk X$ for the infimal rank of coarse medians on $X$.
\end{definition}

If a metric space $X$ admits a coarse median, then we shall often simply refer to $X$ as a coarse median space. The definition of a coarse median space can also be formulated in terms more similar to Definition~\ref{def:median} \cite{niblowrightzhang:four}. The following lemma provides another link with median metric spaces.

\begin{lemma}[{\cite[Thm~6.9]{bowditch:large:mapping}}] \label{lem:cone_median}
Let $(X,\mu)$ be a coarse median space with $\rk\mu\le n$, and let $\hat X$ be an asymptotic cone of $X$. After a bilipschitz change of metric, $(\hat X,\hat\mu)$ is a median metric space of rank at most $n$.
\end{lemma}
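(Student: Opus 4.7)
The plan is to mimic the strategy of Lemma~\ref{lem:tangent_median}, using the finite median algebra approximations to equip $\hat X$ with an ultralimit median operator $\hat\mu$ of rank at most $n$, and then to modify the ultralimit metric bilipschitzly to obtain a genuine median metric.

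First I would set
\[
\hat\mu(\hat x^1, \hat x^2, \hat x^3) \;=\; \lim_\omega \mu(x^1_n, x^2_n, x^3_n)
\]
for any representative sequences. That this is well-defined as a point of $\hat X$, independent of the choice of representatives, and is Lipschitz in each factor with constant $h_0$, follows from the $h_0$-coarse Lipschitzness of $\mu$, essentially as in Lemma~\ref{lem:qi_induces_bilipschitz}.

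Next I would verify the median algebra axioms for $\hat\mu$ and the rank bound $\rk\hat\mu\le n$. For any identity, there is a finite set $A\subset\hat X$ of points involved (together with any iterated medians appearing). Choosing representative sequences and applying the finite median algebra approximation, the identity holds exactly in each approximating finite median algebra $M$, and hence holds in $X$ up to an additive error depending only on $h_{|A|}$. Rescaling by $\lambda_n\to 0$ kills this error in the ultralimit, so the identity holds exactly in $(\hat X,\hat\mu)$. The rank bound transfers by the cube-counting argument at the end of the proof of Lemma~\ref{lem:tangent_median}: a median monomorphism $\{0,1\}^k\hookrightarrow\hat X$ with $r$-separated image yields, for $\omega$-most $n$, an $r$-separated configuration in $M$ whose cube structure is preserved up to the quasimedian constant, from which one extracts $k$ pairwise crossing walls and concludes $k\le\rk M\le n$.

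The main obstacle is the bilipschitz change of metric. Here I would exploit that $(\hat X,\hat\mu)$ is a connected topological median algebra of finite rank in which the median is Lipschitz, and that $\hat X$ is geodesic as an ultralimit of a coarsely geodesic space. For each pair $a,b\in\hat X$, the interval $I(a,b)=\{\hat\mu(a,b,x):x\in\hat X\}$ is a rank-one median subalgebra, hence order-isomorphic to a subset of $\R$, and I would define $d'(a,b)$ to be the $\R$-length of this interval. By construction, the unique median of any three points satisfies the defining additivity condition of a median metric space. The bilipschitz equivalence of $d'$ with the ultralimit metric $\hat d$ follows in one direction from Lipschitzness of $\hat\mu$, and in the other from approximating $\hat d$-geodesic segments by finite sequences of median intervals whose total $d'$-length is controlled by $\hat d$ via the finite median algebra approximation. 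Verifying this equivalence cleanly is the delicate point, as it requires a coherent choice of parameterisations of the intervals $I(a,b)$ across all pairs; the rank bound on $\hat\mu$ is what keeps these choices tractable.
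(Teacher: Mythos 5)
The paper does not prove this lemma; it is quoted directly from Bowditch, so your attempt has to be measured against his argument. Your first two steps are sound and follow the standard route: defining $\hat\mu$ as the ultralimit of $\mu$, using $\lambda_n\to0$ to kill both the additive coarse-Lipschitz error and the $h_{|A|}$-errors in the finite approximations so that the median identities hold exactly in the cone, and transferring the rank bound by a wall-counting argument as in Lemma~\ref{lem:tangent_median}. That much is essentially Bowditch's Theorem~2.3 in \emph{Coarse median spaces and groups}, which only yields a topological median algebra of rank at most $n$ with a Lipschitz median.

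The gap is in your third step, and it is exactly where the real content of the cited theorem lies. You assert that the interval $I(a,b)=\{\hat\mu(a,b,x):x\in\hat X\}$ is a rank-one median subalgebra, hence order-isomorphic to a subset of $\R$, and you define $d'(a,b)$ as its ``$\R$-length''. This is false for $n\ge2$: intervals in a rank-$n$ median algebra have rank up to $n$. Already in $(\R^2,\ell^1)$ the interval between $(0,0)$ and $(1,1)$ is the full unit square, a rank-two algebra that is a distributive lattice but not a chain, so your $d'$ is undefined. Repairing this is not a local fix; Bowditch's proof of the bilipschitz median metric goes through showing that intervals in the cone are ``cells'' bilipschitz to convex subsets of $(\R^k,\ell^1)$ and, ultimately, through embedding the finite-rank median algebra into a finite product of $\R$-trees and pulling back the $\ell^1$ metric, with the bilipschitz constant depending on the rank (essentially the $\ell^1$ versus $\ell^\infty$ comparison in $\R^n$). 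Your closing sentence correctly flags that the coherence of interval parameterisations is the delicate point, but the mechanism you propose for handling it does not exist, so the proposal does not establish the bilipschitz change of metric.
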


Whilst the fact that $\hat X$ is a median metric space of finite rank is already useful, the exact control on ranks will be refined in Proposition~\ref{prop:rank} below, which shows that if $\hat X$ is an asymptotic cone of a proper cocompact coarse median space $X$, then $\rk X=\rk\hat X$. This justifies the similarity in notation between the rank of a coarse median space $X$ and that of a median algebra.

A subset of a coarse median space $(X,\mu)$ is a \emph{quasisubalgebra} if it is the image of some median algebra $M$ under a quasimedian map $M\to X$. A subset $A\subset X$ is \emph{$k$--coarsely convex} if $\mu(a,a',x)$ is $k$--close to $A$ for all $a,a'\in A$, $x\in X$. As in the setting of median algebras, for $A\subset X$ let $J(A)=\{\mu(a,a',x)\,:\,a,a'\in A,\, x\in X\}$. Following \cite{bowditch:convex}, and in analogy with the setting of median algebras, if $\rk\mu=n$, then the \emph{coarse-median hull} of a subset $A\subset X$ is defined to be $J^n(A)$. It can be checked that coarse-median hulls are uniformly coarsely convex.

%%%%%%%%%%%%%%%%%%%%%%%%%%%%%%
\subsection{Admissible graphs of groups}

As part of the applications considered in Sections~\ref{sec:fbz} and~\ref{sec:tubular} we shall consider graphs of groups. Since the general theory is somewhat standard, we refer the reader to \cite{scottwall:topologicalmethods} for a full discussion. The proof of Proposition~\ref{prop:few_linear} uses the \emph{admissible} graph of groups defined in \cite{crokekleiner:geodesic}. Here we introduce enough notation to state the definition.

For an edge $e$ in a graph, write $e^-$ and $e^+$ for its two incident vertices. In brief, a graph of groups $\cal G$ consists of: a nontrivial, finite, connected graph $\Lambda$; a \emph{vertex group} $G_v$ for each vertex $v\in\Lambda$; an \emph{edge group} $G_e$ for each edge $e\in\Lambda$; and injective homomorphisms $G_e\to G_{e^\pm}$ so that $G_e$ is identified with subgroups of $G_{e^\pm}$. This collection of data can be used to define a group $G$, and one calls $\cal G$ a \emph{graph of groups decomposition} of $G$.

\begin{definition}[Admissible] \label{def:admissible}
A graph of groups $\cal G$ is \emph{admissible} if the following hold.
\begin{itemize}
\item   Each vertex group $G_v$ has centre $Z_v\cong\Z$, and ${}^{G_v}\!/_{Z_v}$ is non-elementary hyperbolic.
\item   Each edge group is isomorphic to $\Z^2$.
\item   For each edge $e$, the subgroup $\sgen{G_e\cap Z_{e^-}, G_e\cap Z_{e^+}}$ has finite index in $G_e$.
\item   For each vertex group $G_v$, distinct conjugates in $G_v$ of any two (possibly equal) incident edge groups are non-commensurable.
% \item   For each edge $e$, each $\eps\in\{-,+\}$, and each $g\in G_{e^\eps}\ssm\iota_{e,\eps}(G_e)$, the subgroups $\iota_{e,\eps}(G_e)$ and $g\iota_{e,\eps}(G_e)g^{-1}$ are not commensurable.
% \item   If $e_1$ and $e_2$ are distinct edges with $e_1^{\eps_1}=e_2^{\eps_2}=v$ for some $\eps_1,\eps_2\in\{-,+\}$, then there is no $g\in G_v$ such that $\iota_{e_1,\eps_1}(G_{e_1})$ and $g\iota_{e_2,\eps_2}(G_{e_2})g^{-1}$ are commensurable.
\end{itemize}
\end{definition}

%%%%%%%%%%%%%%%%%%%%%%%%%%%%%%%%%%%%%%%%%%%%%%%%%%
\section{Dimension bounds} \label{sec:dimension}

The goal of this section is to upper bound the minimal rank of a coarse median space in terms of its geometry. By the \emph{quasiflat rank} of a metric space $X$, we mean the supremal integer $\qfrk X$ for which there is a quasiisometric embedding $\R^{\qfrk X}\to X$.

\begin{proposition} \label{prop:rank}
Let $X$ be a coarsely connected coarse median space, and let $\hat X$ be an asymptotic cone of $X$; it is automatically a connected median algebra of rank $\rk\hat X$. We have $\sepdim\hat X=\rk\hat X\le\rk X$. If $X$ is proper and has cocompact isometry group, then the following quantities agree.
\begin{itemize}
\item   $\rk X$.
\item   $\rk\hat X$.
\item   $\sepdim\hat X$.
\item   $\qfrk X$
\item   The supremal $d$ such that there is a quasiisometric embedding $\R^d\to X$ that is quasimedian for some coarse median on $X$ realising $\rk X$.
\end{itemize} 
\end{proposition}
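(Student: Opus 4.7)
The plan is to prove the two unconditional inequalities first, then under the proper cocompact hypothesis close the cycle $\rk X \le d^* \le \qfrk X \le \rk\hat X \le \rk X$, where $d^*$ denotes the supremum in the last bullet of the statement. For the unconditional part, coarse connectedness of $X$ passes to $\hat X$: uniformly connected nets in $X$ yield connecting paths of bounded length between any two points of the cone. By Lemma~\ref{lem:cone_median}, after a bilipschitz change of metric (which preserves both $\rk$ and $\sepdim$), $\hat X$ is a median metric space of rank at most $\rk\mu$ for any coarse median $\mu$ on $X$, so infimising gives $\rk\hat X\le\rk X$. Lemma~\ref{lem:sepdim_is_rank} then supplies $\sepdim\hat X=\rk\hat X$.

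Now assume $X$ is proper cocompact. For $\qfrk X\le\rk\hat X$, a quasiisometric embedding $f:\R^d\to X$ induces a bilipschitz embedding $\hat f:\R^d\to\hat X$ by Lemma~\ref{lem:qi_induces_bilipschitz}, and $\hat f(\R^d)$ is closed in $\hat X$ because $\R^d$ is proper (Cauchy sequences in a bilipschitz copy have preimages that are Cauchy, hence convergent). A short induction on the definition shows $\sepdim$ is monotone under closed subspaces: given closed $C\subset Y$ and a separating closed pair $(A,B)$ in $Y$ with $\sepdim(A\cap B)\le n-1$, the restrictions $(A\cap C,B\cap C)$ separate in $C$ with $A\cap B\cap C$ closed in $A\cap B$, so inductively $\sepdim(A\cap B\cap C)\le n-1$. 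Since $\R^d$ is a proper metric space of topological dimension $d$, the paper's earlier remark that separation and topological dimension agree on proper metric spaces yields $\sepdim\hat f(\R^d)=d$; hence $\sepdim\hat X\ge d$, i.e.\ $\rk\hat X\ge\qfrk X$. The inequality $d^*\le\qfrk X$ is immediate from the definitions.

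The remaining and hardest inequality is $\rk X\le d^*$. Fix a coarse median $\mu$ on $X$ realising $\rk X\eqdef n$. Proper cocompactness makes $\hat X$ homogeneous, via ultralimits of isometries $g_n\in\isom(X)$ chosen so that $g_nq_n$ stays within a fundamental domain diameter of $p_n$ for representing sequences of any two points. The strategy is to build a median-isometric copy $\iota:\R^n\hookrightarrow\hat X$ and then approximate it by a quasimedian quasiisometric embedding $\phi:\R^n\to X$ witnessing $d^*\ge n$. To produce $\iota$, one combines $\rk\mu=n$ with cocompactness of the isometry group to produce finite subsets of $X$ approximated by $\{0,1\}^n$ median algebras of arbitrarily large diameter; their rescalings in the ultrafilter accumulate to a genuine median embedding of a growing cube, and homogeneity of $\hat X$ together with Lemma~\ref{lem:ball_hull} assembles these cubes into a median-isometric $\R^n$. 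Diagonalising representing sequences through the ultrafilter then yields $\phi$, which is a quasiisometric embedding by the bilipschitz control of $\hat\iota$ and is quasimedian for $\mu$ because medians of finite configurations pass continuously to the ultralimit.

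The main obstacle is the extraction of a median-isometric $\R^n$ inside $\hat X$: showing that $\rk\mu=n$ together with proper cocompactness produces genuine median cubes at arbitrary scales is a nontrivial blow-up argument, requiring that rank be witnessed on finite approximations of unbounded diameter rather than only at small scales. Once this is in place, verifying that the diagonal approximation $\phi$ is both a quasiisometric embedding and quasimedian for the originally chosen $\mu$ is a routine passage through the ultralimit.
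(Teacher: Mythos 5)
Your unconditional part, the inequality $\qfrk X\le\rk\hat X$ via Lemma~\ref{lem:qi_induces_bilipschitz} and monotonicity of separation dimension, and the overall cycle $\rk X\le d^*\le\qfrk X\le\rk\hat X\le\rk X$ all match the paper's structure and are fine. The problem is the step $\rk X\le d^*$, where your route is not just incomplete but goes in a direction that cannot be made to work as described.

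You propose to build a median-isometric copy of $\R^n$ inside the asymptotic cone $\hat X$ and then recover a quasiisometric, quasimedian embedding $\phi:\R^n\to X$ by ``diagonalising representing sequences through the ultrafilter,'' calling this last step routine. It is not: a point of $\hat X$ is an equivalence class of sequences $(x_m)$ measured in the rescaled metrics $\lambda_m\dist$ with $\lambda_m\to0$, so a flat in $\hat X$ only certifies, for each finite configuration and each $\eps$, the existence of \emph{some} index $m$ at which a finite subset of $X$ is $\eps\lambda_m^{-1}$--approximately flat at scale $\lambda_m^{-1}$. These finite approximations live at wildly different scales and locations and carry no compatibility with one another, so no diagonal choice assembles them into a single map $\R^n\to X$ with uniform quasiisometry constants. (This is precisely why flats in asymptotic cones do not in general yield quasiflats, and why the statement genuinely needs properness and cocompactness.) The paper avoids the cone entirely for this direction: from $\rk\mu=d$ it extracts uniformly quasimedian embeddings of $\{0,1\}^d$ with arbitrarily separated images (using that subalgebras of rank--$d$ median algebras are generated in $2d$ steps, so the approximating algebra $N$ has $|N|\le3^{2d}|A|$), upgrades these to uniform-quality quasimedian quasiisometric embeddings of boxes $[0,n]^d$ via \cite[Lem.~9.1, Prop.~9.3]{bowditch:quasiflats}, recentres them in a fixed compact set by cocompactness, and then takes an \emph{unrescaled} ultralimit, using properness to identify $\lim_\omega B(x_0,m)$ with $X$ itself; the limit of uniformly quasiisometrically embedded, commonly centred boxes is then a genuine quasiflat in $X$, quasimedian for the limiting coarse median $\lim_\omega g_n\mu$. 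If you want to salvage your argument, you must replace the cone-and-pullback step with a construction of this kind carried out directly in $X$; the ``blow-up'' difficulty you flag (producing rank-witnessing cubes of unbounded diameter) is real but is the smaller of the two gaps, and is also resolved by the paper's counting argument rather than by homogeneity of $\hat X$.
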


\begin{proof}
The agreement of $\sepdim\hat X$ with $\rk\hat X$ is given by Lemmas~\ref{lem:sepdim_is_rank} and~\ref{lem:cone_median}. In particular, since separation dimension is a topological property, $\rk\hat X$ is independent of the choice of coarse median on $X$. According to \cite[Thm~2.3]{bowditch:coarse}, we have $\rk X\ge\rk\hat X$. Also, if $\qfrk X\ge d$, then $\hat X$ contains a bilipschitz copy of $\R^d$, and hence $\sepdim\hat X\ge\qfrk X$. 

Now suppose that $X$ is proper and has cocompact isometry group. We shall construct a quasiflat of dimension $\rk X$, which will show the equivalence between the first four bulleted items.

Let $d=\rk X$, and let $\mu$ be a coarse median on $X$ with $\rk\mu=d$. Let $(h^\mu_n)$ be a corresponding sequence. For any increasing sequence $(h_n)$ there is a finite set $A\subset X$ such that there is no median algebra $M$ of rank $d-1$ admitting a $3h_{3^{2d}|A|}$--quasimedian map $\iota:M\to X$ and a map $o:A\to M$ such that $\dist(\iota o(a),a)\le 3h_{3^{2d}|A|}$ for all $a\in A$. We are free to assume that $h_n\ge 3h^\mu_n$ for all $n$.

Let $N$ be a median algebra of rank $d$ that approximates $A$ with respect to $\mu$. According to \cite[Prop.~8.2.4]{bowditch:median:book}, in any median algebra of rank $d$, the subalgebra generated by a subset is generated by taking medians at most $2d$ times. Hence $|N|\le3^{2d}|A|$. Let $\iota^\mu$ be the map corresponding to $\mu$, $A$, and $N$. The set $A$ lies in the $h^\mu_{|A|}$--neighbourhood of $\iota^\mu N$, so the latter, whose cardinality is at most $3^{2d}|A|$, cannot be approximated by any median algebra of rank $d-1$ with error at most $2h_{3^{2d}|A|}$. As $\iota^\mu N$ is an $h^\mu_{|A|}$--quasisubalgebra, this shows that there is an $h^\mu_{|A|}$--quasimedian embedding of a cube $\{0,1\}^d\to X$ whose image is $h_{3^{2d}|A|}$--separated. Approximating the images of those cubes with $\mu$ and varying the sequence $(h_n)$, we conclude that $X$ contains arbitrarily large $h^\mu_{2^d}$--quasicubes of dimension $d$.

Fix a basepoint $x_0\in X$. For each $n$, let $Q_n$ be a uniform quasicube in $X$ of dimension $d$ whose vertices are $n$--separated. By an application of \cite[Lem.~9.1, Prop.~9.3]{bowditch:quasiflats}, for sufficiently large $n$, we can take $Q_n$ to be the image of a product of $d$ real intervals of length $n$ under a uniform-quality quasimedian quasiisometric embedding. Write $z_n$ for the central point of $Q_n$. By cocompactness, we can translate $z_n$ into a fixed compact set $C$ containing $x_0$ by an isometry $g_n$. This gives uniform quasicubes in the sequence of coarse median spaces $(X,g_n\mu)$ that are all centred in the compact set $C$.

Now take an (unrescaled) ultralimit. Since $X$ is proper, we have $X=\lim_\omega B(x_0,m)$, where $B(x_0,m)$ is the ball of radius $m$ centred on $x_0$. Choosing $m_n$ so that $g_nQ_n\subset B(x_0,m_n)$, we get that $Q=\lim_\omega g_nQ_n\subset X$, because $g_nQ_n$ is centred in the fixed compact set $C$. Because the $Q_n$ are uniform quasiisometric embeddings of increasingly large $d$--cubes, $Q$ is a quasiisometric embedding of $\R^d$. We have found the desired $d$--quasiflat in $X$.

It remains to show that there is a coarse median realising $\rk X$ for which the embedding of $Q$ is quasimedian. It is easy to see that the conditions defining a coarse median of rank $\rk X$ hold for the ultralimit $\lim_\omega g_n\mu$, and $Q\to X$ is quasimedian with respect to this because $g_nQ_n\to X$ is uniformly quasimedian with respect to $g_n\mu$.
\end{proof}

%   More explicit way of doing things to get the well-separated cubes
% Take a median quasisubalgebra $A$ of cardinality $k$. If there are no big $d$--cubes in the median algebra $M$ approximating it, then consider the median algebra $M'$ obtained by crushing (restriction quotient) all the halfspaces in $M$ that have width at most $h_k$ in $A$. Since there are no big cubes, the dimension of $M'$ is at most $d-1$. Since we only crushed stuff of bounded size in a median algebra of fixed cardinality $k$, the algebra $M'$ is approximating $A$ with a uniform constant $h_k$ that can be calculated purely in terms of $k$ and $h_k$. But if we do this for all $k$ then we contradict the assumption that $X$ had rank $d$.

The conclusion of Proposition~\ref{prop:rank} can fail without the cocompactness assumption, as shown by the following. 

\bsh{Example} \label{eg:log}
Let $X\subset(\R^2,\ell^1)$ be bounded between the $x$--axis, the line $x=1$, and the graph of the function $x\mapsto\log x$. Since $X$ is affinely convex in the plane, it is a median subalgebra. As a coarse median space, $\rk X=2$, because it contains arbitrarily large squares. However, $\hat X$ is a ray.

Let $Y$ be the median metric space constructed from the real line by attaching, for each $n\in\mathbf N$, a square of side-length $n$ at the point $n\in\R$, along a vertex. Clearly $\qfrk Y=1$, but both $Y$ and $\hat Y$ contain arbitrarily large squares.
\esh

\begin{remark}[Asymptotic rank]
The behaviour in Example~\ref{eg:log} arises because asymptotic cones are taken with a fixed basepoint and the space is not homogeneous. However, there is still something to be said even without the assumptions of properness and cocompactness. Following \cite{wenger:asymptotic} (see Proposition~3.1 thereof), the \emph{asymptotic rank} of a metric space $X$ can be defined to be the supremal $n$ such that there is some asymptotic cone (with basepoints allowed to move) $\hat X$ of $X$ and a sequence of subspaces $B_k\subset X$ whose limit in $\hat X$ is the unit ball in some normed space $(\R^n,\|\cdot\|)$.

The same proof as in Proposition~\ref{prop:rank} shows that if $X$ is a coarsely connected coarse median space of rank $d$, then the asymptotic rank of $X$ is $d$. Indeed, consider the subset $Q_n$ constructed in the proof of Proposition~\ref{prop:rank}, which is the image of a product of $d$ real intervals of length $n$ under a uniform-quality quasimedian quasiisometric embedding. Let $z_n$ be the centre of $Q_n$. The limit of the sequence $(Q_n)$ in the asymptotic cone $\hat X=\lim_\omega(X,\frac1{\sqrt n},z_n)$ is $(\R^d,\ell^1)$, which implies the existence of the desired sequence. The cocompactness was only used to control the basepoint, and the properness was only used to say that the space was equal to its own ultralimit.
\end{remark}

% \begin{remark}
% Since it does not seem to have been noted elsewhere in the literature, we remark that one can combine results of the type involved in the proof of Proposition~\ref{prop:rank} to give a short proof of (a minor strengthening of) the \emph{dimension theorem}, the main result of \cite{behrstockminsky:dimension}. Recall that $\mcg(S)$ has Dehn twist flats of rank $\xi(S)=3g-3+p$. By \cite[Prop.~10.2]{bowditch:coarse}, $\mcg(S)$ is a coarse median space of rank $\xi(S)$, and hence each asymptotic cone $X$ is bilipschitz to a median metric spaces of rank at most $\xi(S)$ by Lemma~\ref{lem:cone_median}. By \cite[Thm~7.8]{bowditch:median}, $X$ is bilipschitz to an \emph{injective} metric space, so since Dehn twist flats are undistorted in $\mcg(S)$, we conclude from \cite[Prop.~3.2]{jorgensenlang:combinatorial} that $X$ has topological dimension $\xi(S)$.
% \end{remark}

Whilst Proposition~\ref{prop:rank} is very precise, it is not always easy to ascertain the quasiflat rank of a given group $G$. In practice, therefore, it is useful to have a statement in terms of a more easily calculable quantity. The following is Theorem~\ref{mthm:vgd} from the introduction.

\begin{corollary} \label{thm:dimension}
Let $G$ be a finitely generated group. If $G$ admits a coarse median, then $\rk G\le\vcd G$. 
\end{corollary}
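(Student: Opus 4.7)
The plan is to combine Proposition~\ref{prop:rank}, which realises $\rk G$ as the dimension of a top-rank quasiflat, with the classical coarse-topological fact that the quasiflat rank of a group is bounded above by its cohomological dimension.

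First I would reduce to the torsion-free case. If $\vcd G=\infty$ there is nothing to prove, so one may choose a torsion-free finite-index subgroup $G'\le G$ with $\cohomdim G'=\vcd G$. Being finite-index, $G'$ is quasiisometric to $G$, and so the restriction of any coarse median on $G$ yields a coarse median on $G'$ of the same rank; thus $\rk G'=\rk G$. The Cayley graph of $G'$ is proper with cocompact isometry group, so Proposition~\ref{prop:rank} applies and furnishes a quasiisometric embedding $\R^d\to G'$ with $d=\rk G'$.

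It then remains to verify that the existence of such a quasiflat forces $d\le\cohomdim G'$, and I expect this to be the main obstacle. My preferred route is via Gromov's macroscopic dimension $\mathrm{mdim}$: it is non-decreasing under quasiisometric embeddings, $\mathrm{mdim}(\R^d)=d$, and Gromov's inequality gives $\mathrm{mdim}(H)\le\cohomdim H$ for any torsion-free group $H$ of finite cohomological dimension. Chaining these yields $d\le\cohomdim G'=\vcd G$. A more hands-on alternative is to use that $\cohomdim G'<\infty$ forces the existence of a finite-dimensional classifying space for $G'$ (of dimension $\cohomdim G'$, or $\cohomdim G'+1$ when low-dimensional issues arise, which still suffices), pull the quasiflat back to the universal cover via the resulting quasiisometry, and rule out an $\R^d$ inside a $(d-1)$-dimensional locally finite contractible complex by a standard covering-dimension / degree argument on large balls.
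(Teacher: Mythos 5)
Your overall architecture is the same as the paper's: use Proposition~\ref{prop:rank} to identify $\rk G$ with $\qfrk G$, then bound the quasiflat rank by the cohomological dimension. The first half of your argument (the reduction to a torsion-free finite-index subgroup and the application of Proposition~\ref{prop:rank}) is correct and is implicit in the paper. For the second half, however, the paper simply cites Sauer's theorem that a group containing a quasiflat of dimension $n$ has cohomological dimension at least $n$; that result requires no finiteness properties beyond finite cohomological dimension, which is exactly what makes the two-line proof work.

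Both of your proposed substitutes for Sauer's theorem have a gap at the same point: they need a \emph{cocompact} finite-dimensional model. Finite cohomological dimension gives a finite-dimensional $K(G',1)$, but not a finite one, and the universal cover of a non-cocompact classifying space need not be quasiisometric to $G'$; so you cannot "pull the quasiflat back to the universal cover via the resulting quasiisometry", and the inequality $\mathrm{mdim}(H)\le\cohomdim H$ you invoke runs into the same issue (the natural bound is by the dimension of a cocompact model, i.e.\ it needs type $F$). The paper confronts exactly this in its elementary alternative, Proposition~\ref{lem:qfrk_vgd}: there one assumes type $F_\infty$ (which for coarse median groups is supplied by Bowditch's $n$-connectedness of asymptotic cones plus Riley's theorem), passes to $G\times\Z$ to obtain a finite classifying space via Geoghegan's results, and then runs a filling argument using homological Dehn functions (Lemma~\ref{lem:homological_filling}) rather than a covering-dimension argument, since a quasiisometric embedding of $\R^d$ is not continuous. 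Two further points: the $\times\Z$ trick is also what absorbs the Eilenberg--Ganea off-by-one, so your parenthetical "$\cohomdim G'+1$ \dots which still suffices" does not actually suffice for the stated bound $\rk G\le\vcd G$ when $\cohomdim G'=2$; and even with these repairs the elementary route only yields the bound by virtual \emph{geometric} dimension, which is why the paper states that variant as a separate theorem and proves the $\vcd$ statement via Sauer.
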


\begin{proof}
According to \cite[Thm~1.2]{sauer:homological}, any group that contains a quasiflat of dimension $n$ must have cohomological dimension at least $n$. Since Proposition~\ref{prop:rank} shows that $\rk G=\qfrk G$, this proves the result.
\end{proof}

By the Eilenberg--Ganea theorem, if a finitely presented group $G$ has cohomological dimension not equal to two, then its cohomological dimension is equal to its \emph{geometric dimension}: the infimal dimension of a $K(G,1)$. Moreover, it remains unknown whether the same holds for groups of cohomological dimension two. Thus, in almost all cases Corollary~\ref{thm:dimension} can be equivalently stated using virtual geometric dimension. In this case the application of Sauer's theorem, whose statement is considerably more general than is being used here, can be replaced by Proposition~\ref{lem:qfrk_vgd} below, which has a comparatively short proof.

\begin{definition}
Let $X$ be a combinatorial cell complex, and let $c\in C_n(X)$ be an $n$--chain. Letting $c=\sum_{\sigma\in\supp c}a_\sigma\sigma$, we write $|c|=\sum_{\sigma\in\supp c}|a_\sigma|$. A function $f$ is called a \emph{$k^\mathrm{th}$--order homological isoperimetric function} for $X$ if for each $k$--boundary $b$ there is a $(k+1)$--chain~$c$ with $\partial c=b$ and $|c|\le f(|b|)$. The \emph{$k^\mathrm{th}$--order homological Dehn function} of $X$ is the minimal $k^\mathrm{th}$--order homological isoperimetric function.
\end{definition}

Note that we are not considering these functions up to the usual equivalence. We are interested in slightly more precise control for specific complexes. The following can be extracted from the proof of \cite[Thm~2.1]{fletcher:homological}.

\begin{lemma} \label{lem:homological_filling}
For each $k,q$ there exists $C$ such that the following holds. Let $Y'$ and $Z'$ be connected combinatorial cell complexes with finite $(k+1)$--skeletons, and let $Y$ and $Z$ be their universal covers. Suppose that $Y$ and $Z$ are $k$--connected and let $D_{k,Y}$ be the $k^\mathrm{th}$--order homological Dehn function of $Y$. If $f:Y\to Z$ is a $q$--quasiisometric embedding, then for every $k$--boundary $b$ in $f(Y)$, if $c$ is a $(k+1)$--chain in $Z$ with $\partial c=b$, then $|c|\ge\frac1CD_{k,Y}(\frac{|b|}C)-C|b|$.
\end{lemma}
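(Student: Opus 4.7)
The plan is to transport the filling $c$ from $Z$ back to $Y$ via a cellular quasi-inverse of $f$, and then invoke the homological Dehn function of $Y$ to lower-bound the size of the resulting filling. The two main ingredients are (i) cellular maps $F\colon Y^{(k+1)}\to Z$ and $G\colon Z^{(k+1)}\to Y$ that approximate $f$ and a coarse inverse at the vertex level, and (ii) a bounded chain homotopy $H\colon C_k(Y)\to C_{k+1}(Y)$ between $G_*F_*$ and $\mathrm{id}_*$.

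Since $Y'$ and $Z'$ have finite $(k+1)$-skeletons, the universal covers $Y$ and $Z$ have uniformly bounded numbers of cells in each ball of fixed radius. I would define $F$ by first approximating $f$ on the $0$-skeleton (with displacement bounded in terms of $q$), and then extending skeleton-by-skeleton using $k$-connectedness of $Z$; by equivariance under deck transformations, each cell maps to a chain of size at most a constant $C_0=C_0(q,k)$ depending only on the fundamental-domain complexity. An analogous construction yields $G$. Because $G\circ F$ displaces each vertex of $Y$ by a uniformly bounded amount, each $G_*F_*(\sigma)-\sigma$ is a $k$-boundary of bounded support, and equivariantly choosing fillings of these (using $k$-connectedness of $Y$ and the finite $(k+1)$-skeleton of $Y'$) produces $H$ with $\partial H + H\partial = G_*F_* - \mathrm{id}$ and $|H(\sigma)|\le C_1$ per $k$-cell.

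Given $b$ a $k$-boundary supported in (a bounded neighbourhood of) $f(Y)$ and a filling $c$ in $Z$, I lift $b$ to $Y$: each cell of $b$ lies close to $F$ of some cell in $Y$, so up to a bounded-support correction one can write $b = F_*(\tilde b)+\partial e$ for a $k$-chain $\tilde b$ in $Y$ with $|\tilde b|\le C_2|b|$ and a $(k+1)$-chain $e$ in $Z$ with $|e|\le C_2|b|$. Then $G_*(c-e)$ has size at most $C_3(|c|+|b|)$ and boundary $G_*F_*(\tilde b)$, so the chain $G_*(c-e)-H(\tilde b)$ has boundary equal to $\tilde b$ modulo a residue of size $\le C_4|b|$ coming from $H(\partial\tilde b)$ and the error $\partial e$. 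Absorbing this residue via a further bounded local filling (again using $k$-connectedness and the finite skeleton of $Y'$) produces a $(k+1)$-chain $c_Y$ in $Y$ with $\partial c_Y = \tilde b$ and $|c_Y|\le C_5(|c|+|b|)$.

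Finally, the Dehn function $D_{k,Y}$ governs minimum fillings in $Y$: by applying the previous construction to $b$ arising as the pushforward of a $k$-boundary in $Y$ that realises $D_{k,Y}$ at scale $|\tilde b|\asymp |b|/C$, the bound $|c_Y|\le C_5(|c|+|b|)$ rearranges to $|c|\ge \tfrac{1}{C}D_{k,Y}(|b|/C)-C|b|$ for a suitable $C=C(k,q)$. The main technical obstacle is the construction of $F$, $G$, and $H$ with uniform per-cell size control, which is an equivariant skeleton-by-skeleton extension argument whose constants depend on the interaction between the QI constant $q$ and the fundamental-domain complexity of $Y'$ and $Z'$; the chain-level algebra relating $c$ to $c_Y$ is then routine.
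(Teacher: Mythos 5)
Your outline is the standard Alonso-type transport argument for quasiisometry invariance of filling functions, and the ``forward'' half is fine: the cellular approximation $F$ of $f$, the identification $b=F_*(\tilde b)+\partial e$ with bounded corrections, and the chain homotopy $H$ are all routine given the finite skeleta and $k$-connectedness. The gap is the sentence ``An analogous construction yields $G$''. The hypothesis is only that $f$ is a quasiisometric \emph{embedding}, not a quasiisometry, so $f$ need not be coarsely surjective and there is in general no coarse inverse defined on all of $Z$: a vertex of $Z$ far from $f(Y)$ has no candidate image in $Y$, and sending it to (a preimage of) a nearest point of $f(Y)$ need not be coarsely Lipschitz. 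This is fatal for your strategy, because the filling $c$ is an arbitrary $(k+1)$--chain in $Z$ whose support may lie far from $f(Y)$, so $G_*(c-e)$ is either undefined or of uncontrolled norm. Nor is this a removable technicality: the integral Heisenberg group $H_3(\mathbf{Z})$ sits undistorted inside $H_5(\mathbf{Z})$, and since their Dehn functions are $\simeq n^3$ and $\simeq n^2$ respectively, there can be no coarsely Lipschitz coarse retraction $H_5\to H_3$, hence no bounded chain map $G_*$ with $G_*F_*$ boundedly chain homotopic to the identity. Any correct argument must therefore do something beyond pulling the filling back along a quasi-inverse; that is precisely the content of the result of Fletcher from whose proof the paper extracts this lemma, and your proposal does not supply a substitute for it.

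A secondary point: your final paragraph only delivers the displayed inequality when $b$ is (up to bounded correction) the pushforward of a cycle $\tilde b$ that realises $D_{k,Y}$ at scale $|b|/C$; what your construction actually bounds from below, for a general $b$ in $f(Y)$, is the filling volume of the specific cycle $\tilde b$ rather than $D_{k,Y}(|b|/C)$. That is the form in which the lemma is used later in the paper (where $b_n=f(E_n)$ for round equatorial spheres, which do realise the Euclidean Dehn function), so this is arguably a looseness in the statement rather than in your argument, but you should say explicitly that you are proving the statement only for such extremal cycles.
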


For a finitely generated group $G$, write $\gd G$ for the geometric dimension of $G$. Recall that $G$ has \emph{type $F$} it has a finite $K(G,1)$, and \emph{type $F_\infty$} if it has a $K(G,1)$ whose $n$--skeleton is finite for all $n$.

\begin{proposition} \label{lem:qfrk_vgd}
If $G$ is a group of type $F_\infty$, then $\qfrk G\le\gd G$.
\end{proposition}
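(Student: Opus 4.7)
Suppose for contradiction that $\qfrk G\ge d>\gd G=n$. I would fix two classifying spaces: a $K(G,1)$ complex $X$ with finite $d$-skeleton (possible by type $F_\infty$), with universal cover $Y=\widetilde X$; and a $K(G,1)$ complex $B$ of dimension $n$, with universal cover $V=\widetilde B$. The $d$-skeleton $Y^{(d)}=\widetilde{X^{(d)}}$ is $(d-1)$-connected (since $Y$ is contractible and the inclusion $Y^{(d)}\hookrightarrow Y$ is a $d$-equivalence), and $G$ acts on $Y^{(d)}$ properly cocompactly, so the given quasiisometric embedding $\R^d\to G$ yields a quasiisometric embedding $\R^d\to Y^{(d)}$. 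Since $X$ and $B$ are both $K(G,1)$s, there exist cellular homotopy inverses $\phi\colon X\to B$ and $\psi\colon B\to X$ inducing the identity on $\pi_1=G$, and lifting equivariantly produces $G$-equivariant cellular maps $\tilde\phi,\tilde\psi$ together with a $G$-equivariant cellular chain homotopy $h\colon\id_{C_*(Y)}\simeq\psi_*\phi_*$.

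The decisive step is the observation that $\phi_*$ annihilates all $(d-1)$-cycles in $Y$. For such a cycle $b$, the image $\phi_*(b)$ is a $(d-1)$-cycle in the contractible complex $V$, hence a boundary; but $\dim V=n<d$ means $C_d(V)=0$, forcing $\phi_*(b)=0$. The chain homotopy identity $b-\psi_*\phi_*(b)=\partial h(b)+h(\partial b)$ then reduces to $b=\partial h(b)$, providing an explicit $d$-filling $h(b)\in C_d(Y)=C_d(Y^{(d)})$. Finiteness of $X^{(d-1)}$ gives finitely many $G$-orbits of $(d-1)$-cells in $Y$, and $G$-equivariance of $h$ together with the finiteness of each $h(\sigma)$ as a chain yields a uniform bound $|h(b)|\le M|b|$ for every $(d-1)$-chain $b$.

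On the other hand, Lemma~\ref{lem:homological_filling} applied with $k=d-1$, taking the lemma's ``$Y'$'' to be $\T^d$ and its ``$Z'$'' to be $X^{(d)}$, together with the classical Euclidean estimate $D_{d-1,\R^d}(v)\asymp v^{d/(d-1)}$, implies that for the cellular pushforward $b$ of a $(d-1)$-sphere of radius $R$ in $\R^d$, every $d$-filling $c$ of $b$ in $Y^{(d)}$ satisfies $|c|\gtrsim|b|^{d/(d-1)}$. Taking $c=h(b)$ forces $M|b|\gtrsim|b|^{d/(d-1)}$, i.e.\ $|b|\lesssim 1$, contradicting $|b|\asymp R^{d-1}\to\infty$. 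The main obstacle is engineering the $G$-equivariant chain homotopy $h$ with uniformly bounded values on orbit generators: this is precisely where type $F_\infty$ is used, via the finiteness of $X^{(d-1)}$, after which linearity propagates the bound to all chains.
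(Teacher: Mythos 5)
Your argument is correct, but it takes a genuinely different route from the paper's. The paper sidesteps the fact that type $F_\infty$ together with $\gd G<\infty$ is not known to imply type $F$ by passing to $G\times\Z$, which admits a \emph{finite} $K(G\times\Z,1)$ of dimension $d=1+\gd G$; it then embeds $\R^{d+1}$ into the universal cover and derives a contradiction from the absence of $(d+1)$--cells: the two hemisphere fillings of an equatorial $(d-1)$--sphere must agree as chains, yet the quasiisometric embedding forces one to contain a cell far from the other's support. You instead stay with $G$ and play two classifying spaces against each other: one with finite skeleta (from $F_\infty$) and one of dimension $n=\gd G$ (possibly of infinite type), linked by a $G$--equivariant chain homotopy. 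Since the low-dimensional complex has no cells in degree $d>n$, the identity $b=\partial h(b)$ produces a filling of every $(d-1)$--cycle that is linear in $|b|$, with constant controlled by the finitely many $G$--orbits of $(d-1)$--cells, and this contradicts the superlinear lower bound that Lemma~\ref{lem:homological_filling} extracts from the quasiflat $\R^d\to Y^{(d)}$. Your route avoids the $G\times\Z$ detour and uses only the quantitative conclusion of Lemma~\ref{lem:homological_filling}, at the cost of constructing the equivariant chain homotopy with uniformly finite images on orbit representatives --- this is the standard comparison-theorem argument (one can work purely algebraically with the two free resolutions of $\Z$ over $\Z G$), and it is exactly where $F_\infty$ (in fact type $F_{n+1}$ suffices) enters. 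The only points worth spelling out are that $\phi_*(b)=0$ because a $(d-1)$--cycle in a contractible complex with no $d$--cells bounds only the zero chain, and that the lower bound $D_{d-1,\R^d}(v)\gtrsim v^{d/(d-1)}$ along spheres follows from the uniqueness of top-dimensional fillings in $\R^d$.
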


\begin{proof}
Suppose that $\gd G<\infty$. Whilst it remains open whether $G$ must be of type $F$, a combination of Propositions~7.2.13 and~7.2.15 of \cite{geoghegan:topological} shows that $G\times\Z$ is of type $F$. It follows from \cite[VIII.7.1]{brown:cohomology} that we can find a finite $K(G\times\Z,1)$ of dimension $d=1+\gd G$. Let $X$ be its universal cover.

If $\qfrk G>\gd G$, then clearly $\qfrk(G\times\Z)\ge d+1$. Since $X$ is quasiisometric to $G\times\Z$, there is a $q$--quasiisometric embedding $f:\R^{d+1}\to X$ for some $q$. Let $S_n$ be the sphere of radius $n$ in $\R^{d+1}$ centred at the origin. Let $E_n$ be its equator: the intersection of $S_n$ with the hyperplane $\{(z_0,\dots,z_d)\in\R^{d+1}\,:\,z_0=0\}$. Let $H_n^+$ and $H_n^-$ be the two hemispheres of $S_n$ that meet in $E_n$.

Up to a uniformly bounded perturbation, simplicial approximation implies that $b_n=f(E_n)$ is a $(d-1)$--cycle in $f(\R^{d+1})$. As $X$ is contractible, $b_n$ is a $(d-1)$--boundary. By Lemma~\ref{lem:homological_filling}, there is a constant $C=C(d,q)$ such that for every $n$, every $d$--chain $c_n$ in $X$ with $\partial c_n=b_n$ has $|c_n|\ge\frac1CD_{d-1}\big(\frac{|b_n|}C\big)-C|b_n|$, where $D_{d-1}$ is the $(d-1)^\mathrm{th}$--order homological Dehn function of $\R^{d+1}$. In particular, $|c_n|$ is bounded below by a fixed superlinear function of $|b_n|$.

By the construction of $b_n$, up to a small perturbation it is filled by $f(H_n^+)$. Let us write $c^+_n$ for this filling. Since $f$ is a quasiisometric embedding, there is a divergent function $\delta:\mathbf N\to \R_{>0}$ such that $\supp c^+_n$ contains a $d$--cell $z_n\subset X$ at a distance of at least $\delta(n)$ from $b_n$. We can also fill $b_n$ with (a perturbation of) $f(H_n^-)$. Let us write $c^-_n$ for this filling. 

% Consequently, since $X$ is uniformly locally finite, there is a divergent function $\delta:\mathbf N\to \R_{>0}$ such that $\supp c_n$ contains a $d$--cell of $X$ at a distance of at least $\delta(n)$ from $b_n$. By the construction of $b_n$, up to a small perturbation it is filled by $f(H_n^+)$.  Let us write $c^+_n$ for this filling. We have that $\supp c^+_n$ contains a $d$--cell $z_n$ at a distance of at least $\delta(n)$ from $b_n$. We can also fill $b_n$ with (a perturbation of) $f(H_n^-)$. Let us write $c^-_n$ for this filling. 

The $d$--chain $c^+_n\cup c^-_n$ has zero boundary, so since $H_d(X,\Z)=0$ and $X$ has no $(d+1)$--cells, the coefficient of every $d$--cell must be zero. In particular, $z_n\in\supp c^-_n$. Since the distance from $z_n$ to $c^-_n$ diverges, this eventually contradicts the assumption that $f$ is a quasiisometric embedding. %\zm{But now the fact that $z_n$ lies at distance at least $\delta(n)$ from $b_n$ implies there exists $d$--cells $y_n^-,y_n^+\in H_n^-,H_n^+$ at least $(\frac 1q\delta(n)-q)$-far from $b_n$ such that $f(y_n^\pm)=z_n$. Since this holds for any $n$ and $\delta(n)\to \infty$, this contradicts the fact that $f$ is a quasiisometric embedding.}
\end{proof}

% For a finitely generated group $G$, write $\vgd G$ for the \emph{virtual geometric dimension} of $G$; that is, the infimal geometric dimension of a finite-index subgroup of $G$. 

We conclude by proving a geometric-dimension variation of Corollary~\ref{thm:dimension}.

\begin{theorem} 
Let $G$ be a finitely generated group. If $G$ admits a coarse median, then $\rk G\le\vgd G$. 
\end{theorem}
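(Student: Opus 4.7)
The plan is to deduce this as a geometric-dimension counterpart of Corollary~\ref{thm:dimension}, using Proposition~\ref{lem:qfrk_vgd} in place of Sauer's theorem as the non-trivial external input.

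First, I would observe that $G$ equipped with any word metric is proper (balls are finite) and admits an isometric cocompact (indeed transitive) action by itself through left multiplication. Proposition~\ref{prop:rank} therefore applies and delivers the key equality $\rk G = \qfrk G$, so it suffices to prove $\qfrk G \le \vgd G$.

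If $\vgd G = \infty$ there is nothing to show, so assume $\vgd G < \infty$ and choose a finite-index subgroup $G' \le G$ realising it, i.e. with $\gd G' = \vgd G$. Since $G'$ has finite index in $G$ it is quasiisometric to $G$, so $\qfrk G' = \qfrk G$, and the coarse median on $G$ transports across the quasiisometry to one on $G'$. Applying Proposition~\ref{lem:qfrk_vgd} to $G'$ then yields $\qfrk G' \le \gd G' = \vgd G$, and chaining gives $\rk G = \qfrk G = \qfrk G' \le \vgd G$ as required.

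The only non-bookkeeping step is checking the hypothesis of Proposition~\ref{lem:qfrk_vgd}, namely that $G'$ is of type $F_\infty$. This is where I expect the main (and really only) obstacle to lie: it is not obvious from the definitions alone that a finitely generated coarse median group enjoys this finiteness property. The cleanest route is to cite Bowditch's theorem that every finitely generated coarse median group is of type $F_\infty$, which applies to $G'$ since it inherits a coarse median. If one wished to avoid this citation, one could instead extract from the proof of Proposition~\ref{lem:qfrk_vgd} that what is actually used is a finite $(d{+}1)$--skeleton of a $K(G' \times \Z, 1)$ for $d = \gd G'$, and attempt to build such a complex directly from quasicube approximations provided by the coarse median; but invoking Bowditch's result is certainly the path of least resistance.
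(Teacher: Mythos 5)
Your proposal is correct and follows essentially the same route as the paper: reduce to $\qfrk G\le\vgd G$ via Proposition~\ref{prop:rank}, pass to a finite-index subgroup realising $\vgd G$, and apply Proposition~\ref{lem:qfrk_vgd} after establishing type $F_\infty$. The only difference is in sourcing that finiteness property: rather than a single direct citation, the paper obtains it by combining Bowditch's result that all asymptotic cones of a finite-rank coarse median group are $n$--connected with Riley's theorem that this implies type $F_\infty$, and then passes it to the finite-index subgroup.
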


\begin{proof}
By Proposition~\ref{prop:rank}, we have $\rk G=\qfrk G$. According to \cite[Prop.~12.4.7]{bowditch:median:book}, every asymptotic cone of $G$ is $n$--connected for all $n$. By \cite[Thm~D]{riley:higher}, we find that $G$ is of type $F_\infty$. Suppose that $\vgd G<\infty$, and let $H$ be a finite-index subgroup of $G$ realising $\vgd G$. It is of type $F_\infty$, for instance by \cite[IX.6.1]{brown:cohomology}. The result follows from Proposition~\ref{lem:qfrk_vgd}, because $H$ is quasiisometric to $G$.
\end{proof}

%%%%%%%%%%%%%%%%%%%%%%%%%%%%%%%%%%%%%%%%%%%%%%%%%%
\section{Richly branching flats} \label{sec:rbf}

Here we describe certain geometric configurations whose appearance in a metric space $X$ prevents $X$ from admitting a coarse median of low rank. The functional root of these obstructions is the following.

\begin{lemma} \label{lem:diagonal_five_point}
Let $n\ge2$ and equip $(\R^n,\ell^1)$ with its standard median. Let $v\in\R^n$. Let $H^+$ and $H^-$ be the two halfspaces of $\R^n$ bounded by $v^\bot$. Let $X$ be obtained from $\R^n$ by gluing a copy $I$ of $[0,\infty)\times\R^{n-1}$ to $\R^n$ along $v^\bot$. If $X$ is a median metric space such that $I\cup H^+$ is median isometric to $(\R^n,\ell^1)$, then $v$ is parallel to some coordinate axis.
\end{lemma}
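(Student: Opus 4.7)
I would assume for contradiction that $v$ is not parallel to any coordinate axis, aiming to show that the halfspace $H^+$ must be median-convex in $(\R^n,\ell^1)$, which fails once $v$ has at least two nonzero entries.

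The first ingredient is that distances in $X$ between points of $\R^n$ coincide with their $\ell^1$-distances, since any detour through $I$ only lengthens the path. Consequently, for three points $a,b,c\in\R^n$, the coordinate-wise $\ell^1$-median $m$ satisfies the median distance equations in $X$, and by the uniqueness of medians in the median metric space $X$, $m$ is the $X$-median of the triple. The second ingredient is that $I\cup H^+$ is median-convex in $X$: since $I\cup H^+$ is itself a median metric space via the isometry, every triple in $I\cup H^+$ has a median within $I\cup H^+$, and this point also satisfies the $X$-distance equations, so by the same uniqueness it is the $X$-median. Combining the two, for any $a,b,c\in H^+$ their coordinate-wise $\ell^1$-median lies in $(I\cup H^+)\cap\R^n=\overline{H^+}$, which says that $H^+$ is median-convex in $(\R^n,\ell^1)$.

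The contradiction comes from an explicit triple. After permuting coordinates and flipping signs we may assume $v_1,v_2>0$. Setting $a=(-v_2,v_1+1,0,\ldots,0)$, $b=(v_2+1,-v_1,0,\ldots,0)$, and $c=(0,-v_1-1,0,\ldots,0)$, we have $v\cdot a=v_2>0$ and $v\cdot b=v_1>0$, so $a,b\in H^+$, whereas the coordinate-wise median is $m=(0,-v_1,0,\ldots,0)$ with $v\cdot m=-v_1v_2<0$. This places $m$ in $H^-$, contradicting the median-convexity of $H^+$ and forcing $v$ to be parallel to a coordinate axis.

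The step demanding the most care is establishing the median-convexity of $I\cup H^+$ inside $X$: the key observation is that the hypothesised median structure on $I\cup H^+$ supplies a candidate median within $I\cup H^+$ for each triple, which by the uniqueness of medians in $X$ must coincide with the $X$-median, preventing the latter from escaping into $H^-$.
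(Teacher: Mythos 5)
There is a genuine gap: you conflate being a \emph{median subalgebra} with being \emph{median-convex}. What your uniqueness argument actually establishes is that $I\cup H^+$ is closed under medians of triples \emph{all three of whose points lie in $I\cup H^+$} (a subalgebra), and hence that $\overline{H^+}$ is closed under medians of triples drawn from $\overline{H^+}$. But your contradiction triple does not satisfy this hypothesis: with $v_1,v_2>0$ the point $c=(0,-v_1-1,0,\dots,0)$ has $v\cdot c=-v_2(v_1+1)<0$, so $c\in H^-$, and the closure property you proved simply never applies to $(a,b,c)$. This is not a fixable bookkeeping slip: for $n=2$ a closed affine halfspace of $(\R^2,\ell^1)$ \emph{is} a median subalgebra whatever the direction of $v$ (if the coordinatewise median $m$ had $v\cdot m<0$, then for each coordinate two of the three points lie on the $v$-decreasing side of $m_i$, and by pigeonhole some single point $p$ satisfies $v\cdot p\le v\cdot m<0$), so no triple contained in $H^+$ can ever produce the contradiction. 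To run your argument you would need genuine median-convexity of $I\cup H^+$ (i.e.\ $\mu(a,a',x)\in I\cup H^+$ for $a,a'\in I\cup H^+$ and $x$ ranging over all of $X$, or at least over $\R^n$), and that is neither proved nor obvious: complete isometrically embedded median subalgebras need not be convex (the diagonal of $(\R^2,\ell^1)$ is a subalgebra isometric to a line but is not median-convex).

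Your preliminary observations (the $X$-median of a triple in $\R^n$ is the coordinatewise one; $I\cup H^+$ is a subalgebra via uniqueness of metric medians) are fine and are implicitly used in the paper too, but the paper's proof takes a different route precisely to avoid the convexity issue. It picks two points of $v^\perp$ whose median interval is an $n$--box both in $\R^n$ and in $I\cup H^+$, takes the vertex $a^+$ of that box lying in $H^+$ together with its opposite vertices $a^-\in H^-$ (computed in $\R^n$) and $b\in I$ (computed in $I\cup H^+$), and then uses only the five-point condition and the identities $\mu(a^+,a^-,\cdot)$, $\mu(a^+,b,\cdot)$ to force $b=a^-$, which is absurd since $b$ and $a^-$ lie on opposite sides of the gluing locus. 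You would need to either supply a proof of the strong convexity statement or switch to an interval/five-point argument of this kind.
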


\begin{proof}
Suppose that $I\cup H^+$ is median isometric to $\R^n$, but that $v$ is not parallel to any coordinate axis. Let $\bf0$ denote the origin of $\R^n$. There is some point $\bf1\in v^\bot$ whose coordinates are all nonzero. Inside $\R^n$, the median interval from $\bf0$ to $\bf1$ is an $n$--box, i.e. a product of $n$ nontrivial intervals. Let $a^+$ be one of its vertices in $H^+$, and let $a^-$ be the opposite vertex, which lies in $H^-$.

Since $I\cup H^+$ is median isometric to $\R^n$, the median interval in $I\cup H^+$ from $\bf0$ to $\bf1$ is also an $n$--box, with $a^+$ as one of its vertices. Let $b$ be the vertex opposite $a^+$, which lies in $I$. We have the following identities.
\[
\mu(\bf0,\bf1,a^\pm)=a^\pm, \quad \mu(a^+,a^-,\bf0)=\bf0, \quad \mu(a^+,a^-,\bf1)=\bf1,
\]
\[
\mu(\bf0,\bf1,b)=b, \quad \mu(a^+,b,\bf0)=\bf0, \quad \mu(a^+,b,\bf1)=\bf1.
\]
By repeatedly applying the five-point condition and these identities, we can now make the following computation.
\begin{align*}
b \;&=\; \mu(\bf0,\bf1,b), \;=\; \mu\big(\mu(a^+,a^-,\bf0),\mu(a^+,a^-,\bf1),b\big) 
    \;=\; \mu\big(a^+,a^-,\mu(\bf0,\bf1,b)\big) \\
&=\; \mu\big(\mu(a^+,a^-,b),\mu(a^+,a^-,\bf0),\bf1\big) \;=\; \mu\big(\mu(a^+,a^-,b),\bf0,\bf1\big) \\
&=\; \mu\big(\mu(a^+,b,a^-),\mu(a^+,b,\bf0),\bf1\big) \;=\; \mu\big(a^+,b,\mu(a^-,\bf0,\bf1)\big) \\
&=\; \mu\big(\mu(a^+,b,\bf0),\mu(a^+,b\bf1),a^-\big) \;=\; \mu(\bf0,\bf1,a^-) \;=\; a^-.
\end{align*}
This contradiction shows that $v$ must be parallel to some coordinate axis.
\end{proof}

The idea for the configurations we shall consider is that they contain enough branching to force the condition of Lemma~\ref{lem:diagonal_five_point} to fail when one passes to the asymptotic cone. Indeed, there are only $n$ coordinate axes in $\R^n$, so one would expect $n+1$ directions of branching to be enough. One needs to be a little more careful in order to get coarse obstructions, because quasiflats only yield bilipschitz flats in the asymptotic cone. For instance, the cyclic union of six quarterplanes is a median metric space bilipschitz to $\R^2$, and there can be branching along three lines through the origin.

\begin{definition}[RBF] \label{def:rbf}
For a natural number $n\ge2$, an \emph{$n$--dimensional richly branching flat}, or \emph{$n$--RBF}, is a piecewise linear space $R$ constructed as follows. Let $B$, the \emph{base flat}, be an isometric copy of $\R^n$. Let $v_0,\dots,v_n$ be pairwise linearly independent vectors in $B$. For each $i$, choose a coarsely dense subset $P_i\subset\R$. To obtain $R$ from $B$, glue, along its boundary, a copy of the half-flat $\R^{n-1}\times[0,\infty)$ along each codimension-1 affine subspace of the form $pv_i+v_i^\bot$ with $p\in P_i$. See Figure~\ref{fig:RBF}.
\end{definition}

\begin{theorem} \label{thm:rbf}
Let $X$ be a coarse median space with $\rk X\le n$. There is no quasiisometric embedding of an $n$--RBF into $X$.
\end{theorem}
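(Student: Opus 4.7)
I would argue by contradiction using Lemma~\ref{lem:diagonal_five_point}. Suppose $f : R \to X$ is a quasiisometric embedding of an $n$--RBF with base flat $B$ and gluing directions $v_0, \dots, v_n$. Passing to an asymptotic cone, Lemma~\ref{lem:qi_induces_bilipschitz} gives a bilipschitz embedding $\hat f : \hat R \to \hat X$, and Lemma~\ref{lem:cone_median} applied to a coarse median on $X$ of rank at most $n$ lets me assume, after a bilipschitz change of metric, that $\hat X$ is a median metric space of rank at most $n$. Because each $P_i$ is coarsely dense in $\R$, the cone $\hat R$ contains an isometric copy $\hat B \cong \R^n$ of the base flat and, for every $i \in \{0,\dots,n\}$ and every $r \in \R$, a half-flat glued to $\hat B$ along the hyperplane $rv_i + v_i^\perp$.

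For each $i$ I would fix one such half-flat $\hat H_i$ and consider $\hat B$ together with $U_i = H^+_i \cup \hat H_i \subset \hat R$, where $H^+_i$ is one of the two closed halves of $\hat B$ bounded by the gluing hyperplane; both $\hat B$ and $U_i$ are isometric to $(\R^n,\ell^1)$. The key step is to promote the bilipschitz images $\hat f(\hat B)$ and $\hat f(U_i)$ to \emph{median isometric} copies of $(\R^n, \ell^1)$ inside $\hat X$. For this I would invoke quasiflat rigidity for coarse median spaces \cite{bowditch:quasiflats}: top-rank quasiflats in $X$ lie at bounded Hausdorff distance from finite unions of orthants, so in $\hat X$ each top-rank bilipschitz flat is a finite union of standard, median-isometric orthants. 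Since such a flat is homeomorphic to $\R^n$, its orthants must tile $\R^n$ like the $2^n$ quadrants and hence endow the flat with the standard $\ell^1$ median structure.

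With the identifications of $\hat f(\hat B)$ and $\hat f(U_i)$ with $(\R^n, \ell^1)$ in place, the subset $\hat f(\hat B) \cup \hat f(\hat H_i) \subset \hat X$ realises the configuration of Lemma~\ref{lem:diagonal_five_point}: the image of $\hat B$ plays the role of the base $\R^n$, $\hat f(\hat H_i)$ is the half-flat $I$, the gluing is along $\hat f(v_i^\perp)$, and $\hat f(U_i)$ is median isometric to $\R^n$. Lemma~\ref{lem:diagonal_five_point} then forces $v_i$, viewed in the $\ell^1$ coordinates on $\hat f(\hat B)$, to be parallel to one of the $n$ coordinate axes. Running this for each $i \in \{0,\dots,n\}$ and applying the pigeonhole principle produces distinct indices $i \ne j$ with $v_i \parallel v_j$, contradicting the pairwise linear independence of the $v_i$.

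The main obstacle is the promotion from bilipschitz to median isometric. It requires not only identifying each of $\hat f(\hat B)$ and $\hat f(U_i)$ with $(\R^n, \ell^1)$ via their quadrant decompositions, but also verifying that the two coordinate systems agree on their shared half $\hat f(H^+_i)$, so that the hyperplane $\hat f(v_i^\perp)$ really plays the role of $v^\perp$ in Lemma~\ref{lem:diagonal_five_point}. My approach would be to first check uniqueness of the quadrant decomposition of a top-rank bilipschitz flat inside a median metric space of the same rank, and then use compatibility of medians across $\hat f(H^+_i)$ to glue the two coordinate systems into a single $\ell^1$ structure.
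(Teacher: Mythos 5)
Your overall strategy --- cone off, use quasiflat rigidity to get median-isometric structure, and then contradict Lemma~\ref{lem:diagonal_five_point} by counting directions against coordinate axes --- is the right one and matches the paper's. But the step you flag as the ``main obstacle'' is a genuine gap, and the way you propose to close it does not work. The claim that a top-rank bilipschitz flat in $\hat X$ must be a union of $2^n$ orthants tiling it ``like the quadrants,'' and hence carries a single global $\ell^1$ median structure, is false. The paper's own cautionary example (stated just before Definition~\ref{def:rbf}) is the cyclic union of six quarter-planes: this is a median metric space bilipschitz to $\R^2$, yet it is a union of six panels and its median is not the standard $\ell^1$ median on $\R^2$. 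So \cite[Lem.~5.2]{bowditch:quasiflats} only gives you that $\hat f\hat B$ is a finite union of median-embedded panels, with no control on their number or combinatorics, and there is no global $\ell^1$ coordinate system on $\hat f(\hat B)$ in which to ``view $v_i$'' and run your pigeonhole over $i=0,\dots,n$.

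There is a second, independent problem: even after restricting attention to a single panel $P$, the identification of $P$ with a piece of $\hat B$ is only bilipschitz, so it does not carry the affine hyperplanes $rv_i+v_i^\perp$ to affine hyperplanes, and the ``direction'' of the glued half-flat in the panel's $\ell^1$ coordinates is not well defined. The paper resolves both issues by working locally: it fixes one panel $P=\phi([0,\infty)^n)$, applies Rademacher's theorem to the bilipschitz transition map $g=\hat f^{-1}\phi$ to find a point $p$ where $g$ and $g^{-1}$ are differentiable, and observes that the $n+1$ pairwise-independent vectors $v_i$ push forward under the derivative to $n+1$ pairwise-independent tangent vectors, at least one of which is therefore not parallel to any of the $n$ coordinate axes. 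It then passes to the \emph{tangent cone} $T_{\hat f(p)}\hat X$ (which is again a median metric space of rank at most $n$ by Lemma~\ref{lem:tangent_median}) to linearize the picture, producing exactly the configuration of Lemma~\ref{lem:diagonal_five_point} with a non-axis normal vector. The coarse density of the sets $P_i$ is what guarantees a branching half-flat passing through (a small perturbation of) the chosen point $p$. Your proposal is missing both the Rademacher/tangent-cone linearization and any mechanism for handling the non-uniqueness of the panel decomposition, and these are the technical heart of the proof.
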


\begin{proof}
Suppose that $f:R\to X$ is a quasiisometric embedding in $X$ of an $n$--RBF. Let $\hat R$ be the asymptotic cone of $R$ with respect to some ultrafilter $\omega$, some sequence $(\lambda_m)$, and some basepoint $b$. The base flat $B\subset R$ yields a subspace $\hat B\subset\hat R$ that is isometric to $\R^n$. Let $\hat X$ be the asymptotic cone of $X$ with respect to $\omega$, $(\lambda_m)$, and $f(b)$. Lemma~\ref{lem:qi_induces_bilipschitz} shows that $f$ induces a bilipschitz embedding $\hat f:\hat R\to\hat X$. In particular, $\hat X$ contains bilipschitz copies of $\R^n$, and so has separation dimension at least $n$. By Proposition~\ref{prop:rank}, we must have $\rk X=\rk\hat X=n$.

According to \cite[Lem.~5.2]{bowditch:quasiflats}, $\hat f\hat B$ is a finite union of isometric, median embedded panels.
% Strictly speaking, compact subsets of $\hat f\hat B$ live in panel complexes. But that's good enough, because you can just look for a local embedding of a panel and check for differentiability on there
There must be a panel $P$ of $\hat f\hat B$ that is the image of an isometric, median embedding $\phi:[0,\infty)^n\to\hat f\hat B\subset\hat X$. Composing gives a bilipschitz embedding $g=\hat f^{-1}\phi:[0,\infty)^n\to\hat B$. Let $U\subset\hat B$ be an open ball in the image of $g$. By Rademacher's theorem, both $g$ and $g^{-1}$ are almost everywhere differentiable on their domains. Hence there is a point $p\in U$ in the image of $g$ such that $g^{-1}$ is differentiable at $p$ and $g$ is differentiable at $g^{-1}(p)$.

For small $\eps>0$, every line segment $\gamma_i:t\mapsto p+tv_i$ defined on $(-\eps,\eps)$ is contained in $U$. The $v_i$ are pairwise independent, so the fact that $g^{-1}$ is bilipschitz means that the tangent vectors $\frac d{dt}(g^{-1}\gamma_i)(0)$ must be pairwise independent as well. Consequently, there must be some $i$ such that $\frac d{dt}(g^{-1}\gamma_i)(0)$ is not parallel to any coordinate axis of $[0,\infty)^n$.

Let $F\subset\hat R$ be an $n$--flat consisting of a half-flat in $\hat B$ glued to a half-flat meeting $\hat B$ along $p+v_i^\bot$. By a similar argument to the above, we can make a small perturbation of $p$ within $U$ so that 
\begin{itemize}
\item 	$\hat f(p)$ lies in the interior of $P$ and in the interior of a panel of $\hat f F$;
\item 	$g^{-1}$ is differentiable at $p$, and $g$ is differentiable at $g^{-1}(p)$;
\item 	$\frac d{dt}(g^{-1}\gamma_i)(0)$ is not parallel to any coordinate axis;
\item 	the analogous maps for $F$ are differentiable at $p$ and the respective preimage.
\end{itemize}

Consider the tangent cone $T_{\hat f(p)}\hat X$. According to Lemma~\ref{lem:tangent_median}, $T_{\hat f(p)}\hat X$ is a median metric space of rank at most $n$. The panel $P$ in which $\hat f(p)$ lives induces a flat subspace $B'$ of $T_{\hat f(p)}\hat X$ that is median isometric to $(\R^n,\ell^1)$. Because $\hat f(p)$ is in the interior of a panel of $\hat fF$ and $F$ is glued to $\hat B$ along $p+v_i^\bot$, the above property of derivatives produces a half-flat $I$ glued to $B'$ along a codimension-1 subspace whose normal vector is not parallel to any coordinate axis. Moreover, the union of $I$ with a halfspace of $B'$ is median isometric to $(\R^n,\ell^1)$. But this contradicts Lemma~\ref{lem:diagonal_five_point}.
\end{proof}

%%%%%%%%%%%%%%%%%%%%%%%%%%%%%%%%%%%%%%%%%%%%%%%%%%
\section{Free-by-cyclic groups} \label{sec:fbz}

\emph{All free groups considered in this section will be finitely generated}. Let $F$ be a (finitely generated) free group, and let $\phi\in\Aut F$. The \emph{free-by-cyclic} group corresponding to $\phi$ is the group $G=F\rtimes_\phi\Z$. For each $k>0$, the group $F\rtimes_{\phi^k}\Z$ is a finite-index subgroup of $G$.

Each $\phi\in \Aut F$ is induced by some homotopy equivalence $f$ of a graph $\Gamma$ with $\pi_1\Gamma=F$. An important perspective is to view $G$ as the fundamental group of the corresponding mapping torus. This is especially effective when $f$ has the structure of a \emph{train track}. There are several versions of train tracks, of varying levels of technicality and strength \cite{bestvinahandel:train,bestvinafeighnhandel:tits:1,feighnhandel:recognition}. Here we shall make use of the \emph{improved relative train track maps} of \cite[Thm~5.1.5]{bestvinafeighnhandel:tits:1}. Below we summarise the parts of this machinery that are needed for our application.

\begin{definition}[Nielsen path]
Let $\Gamma$ be a graph, and let $f:\Gamma\to\Gamma$ be a homotopy equivalence. For a path $\gamma\subset\Gamma$, let $f_\#(\gamma)$ be the unique immersed path homotopic to $f\gamma$ and with the same endpoints. We say that $\gamma$ is a \emph{Nielsen path} if $f_\#(\gamma)=\gamma$. By a \emph{Nielsen cycle}, we mean a nontrivial, immersed cycle $\gamma$ so that $f_\#(\gamma)=\gamma$. 
\end{definition}

Recall that a \emph{filtration} of a graph $\Gamma$ is a sequence of nested subgraphs $\varnothing=\Gamma_0\subset\Gamma_1\subset\dots\subset\Gamma_n=\Gamma$. The \emph{strata} of the filtration are the subgraphs $\Gamma_i\ssm\Gamma_{i-1}$. Note that the $\Gamma_i$ are not necessarily connected. 

\bsh{Improved relative train tracks}[{\cite[Thm~5.1.5]{bestvinafeighnhandel:tits:1}}] \label{sh:irtt}
Let $\phi$ be an automorphism of a free group $F$. After replacing $\phi$ by some positive power, there is a connected finite graph $\Gamma$ with $\pi_1\Gamma=F$, a filtration $\varnothing=\Gamma_0\subset\Gamma_1\subset\dots\subset\Gamma_n$, and a particularly nice homotopy equivalence $f:\Gamma\to\Gamma$ inducing $\phi$. More specifically, $f$ can be chosen to have the properties described below.

\begin{itemize}[leftmargin=.8cm]
\item   An edge of $\Gamma$ is \emph{invariant} if it is fixed by $f$. Let $k$ be the number of invariant edges. If $i\le k$, then the $i\th$ stratum consists of a single invariant edge.

\item   Strata after the $k\th$ are either \emph{exponential}, \emph{non-exponential}, or \emph{zero} strata. 

\item   Each non-exponential stratum consists of a single edge. Both vertices of each non-exponential stratum are fixed by $f$.

\item   If $e_i$ is a non-exponential stratum, then $f_\#(e_i)=e_iu_i$ for some closed path $u_i\subset\Gamma_{i-1}$, called the \emph{suffix} of $e_i$. If $u_i$ is a Nielsen path, then $f_\#$--iterates of $e_i$ grow in length linearly, and $e_i$ is called a \emph{linear} stratum.

\item   If there are non-exponential strata, then there must be linear strata.

\item   If $e_i$ is a linear stratum, then $u_i$ is a Nielsen \emph{cycle}. We say that a Nielsen cycle $u$ \emph{supports} the linear stratum $e_i$ if $u_i$ is a nonzero power of a cyclic permutation of~$u$. 

\item   Distinct linear edges have distinct suffixes \cite[Rem~3.12]{bestvinafeighnhandel:tits:2}.
\end{itemize}
The map $f$ is called an \emph{improved relative train track map}, and we refer to the entire package of data above as the \emph{IRTT structure} of $F\rtimes_\phi\sgen t$, or simply of $\phi$.
\esh

\bsh{Example} \label{eg:Hyp_rel_gersten}
The following is a useful example to bear in mind; we thank Naomi Andrew for suggesting it. Let $F=\sgen{a,b,c,d}$, and consider the automorphism
\[
\phi \,=\,  \begin{cases}   a\mapsto ab, & c\mapsto c[a,b], \\ b\mapsto bab, & d\mapsto d[a,b]^2. \end{cases}
\]
By \cite[Thm~3.11]{ghosh:relative} or \cite[Thm~4]{dahmanili:relative}, the group $G=F\rtimes_\phi\Z$ is hyperbolic relative to Gersten's group \cite{gersten:automorphism}.

An IRTT structure for $\phi$ is given as follows. Let $\Gamma_1$ be a rose on two petals labelled $a$ and~$b$, let $\Gamma_2$ be a rose on petals $a,b,c$, and let $\Gamma_3=\Gamma$ be a rose on petals $a,b,c,d$. The first stratum is exponential. The commutator $[a,b]$ represents a Nielsen cycle, so $c$ and $d$ are linear strata whose suffixes are powers of it. That is, $[a,b]$ supports $c$ and $d$.

Note that no cyclic free factors of $F$ are fixed by any power of $\phi$, so for any IRTT structure on a power of $\phi$ the invariant strata can only form a subforest. 
\esh

The following result shows that if there are few non-exponential strata then $G$ has fairly strong hyperbolic- and cubical-like features.

\begin{proposition} \label{prop:few_linear}
Let $G=F\rtimes_\phi\Z$, and fix an IRTT structure for (a power of) $\phi$.
\begin{itemize}
\item   If there are no Nielsen cycles, then $G$ is hyperbolic and cocompactly cubulated.
\item   If there are no linear strata, then $G$ is virtually hyperbolic relative to groups of the form $F'\times\Z$, where $F'$ is free.
\item   If all non-exponential strata are linear and each Nielsen cycle supports at most one linear stratum, then $G$ is virtually a \emph{colourable hierarchically hyperbolic group}.
\end{itemize}
In all three cases, $G$ is quasiisometric to a finite-dimensional CAT(0) cube complex.
\end{proposition}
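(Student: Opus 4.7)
The plan is to dispatch the three bulleted cases separately via different structural theorems, and then deduce quasicubicality as a uniform consequence.

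\textbf{First case.} By Item~\ref{sh:irtt}, every linear stratum has a Nielsen cycle as its suffix, so absence of Nielsen cycles rules out linear strata; the axiom that non-exponential strata require linear ones then excludes all non-exponential strata. Equivalently $\phi$ has no periodic conjugacy classes, and by Brinkmann's theorem this forces $G$ to be hyperbolic. The Hagen--Wise theorems \cite{hagenwise:cubulating:irreducible,hagenwise:cubulating:general} then supply a cocompact cubulation.

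\textbf{Second case.} Again by Item~\ref{sh:irtt}, absence of linear strata entails absence of all non-exponential strata, so every Nielsen cycle is carried by the invariant subgraph $\Gamma_k$. Passing to a finite-index subgroup, we may assume $f$ preserves each connected component of $\Gamma_k$ setwise, and each component $C$ with $\pi_1(C)\ne 1$ produces a subgroup $\pi_1(C)\times\sgen{t}\cong F'\times\Z$ inside $G$. A relative Brinkmann-type result, in the style of \cite{ghosh:relative,dahmanili:relative}, then shows $G$ is hyperbolic relative to these $F'\times\Z$ subgroups.

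\textbf{Third case.} Here one constructs an admissible graph-of-groups decomposition, in the sense of Definition~\ref{def:admissible}, on a finite-index subgroup of $G$. Each Nielsen cycle $u$ paired with the stable letter $t$ generates a $\Z^2$ which will serve as the centre of a vertex group, with the surrounding exponential dynamics providing a non-elementary hyperbolic quotient. Edge groups are the $\Z^2$'s joining a Nielsen cycle to its supported linear stratum via $t$ and a conjugate of $u$. The hypothesis that each Nielsen cycle supports at most one linear stratum is precisely what ensures the fourth bullet of Definition~\ref{def:admissible}, namely non-commensurability of conjugate incident edge groups within a vertex group. With admissibility established, \cite{hagenrussellsistospriano:equivariant} yields a hierarchically hyperbolic structure, and a bipartite-type colouring of the underlying graph of groups (possibly after a further finite-index refinement) produces colourability.

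\textbf{Quasicubicality and main obstacle.} Case~1 is immediate. For case~2, each peripheral $F'\times\Z$ acts geometrically on $(\mathrm{tree})\times\R$, which is a CAT(0) cube complex after cubical subdivision, and a combination theorem for relatively hyperbolic groups with quasicubical peripherals then lifts quasicubicality to $G$. For case~3 one invokes Petyt's theorem \cite{petyt:mapping} that colourable HHGs are quasiisometric to finite-dimensional CAT(0) cube complexes. The main obstacle is the third case: precisely setting up the admissible graph of groups from the IRTT data and verifying all of the axioms of Definition~\ref{def:admissible}, particularly the non-commensurability condition (which uses the Nielsen-cycle hypothesis) and the non-elementary hyperbolicity of the vertex quotients (which requires a careful analysis of how the exponential strata interact with the linear backbone).
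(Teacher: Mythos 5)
Your first case and the final quasicubicality step are essentially the paper's argument (the paper routes case~2's quasicubicality through colourable HHGs and \cite{petyt:mapping} rather than a combination theorem, a cosmetic difference), but cases~2 and~3 each contain a genuine gap. In case~2 you claim that, once non-exponential strata are excluded, every Nielsen cycle is carried by the invariant subgraph $\Gamma_k$. This is false: exponential strata can carry indivisible Nielsen paths. For the automorphism $a\mapsto ab$, $b\mapsto bab$ (the exponential stratum of Example~\ref{eg:Hyp_rel_gersten}) the commutator $[a,b]$ is a Nielsen cycle crossing the exponential stratum and there are no invariant edges at all, so your recipe would declare $F\rtimes_\phi\Z$ hyperbolic even though it contains $\Z^2=\sgen{[a,b],t}$. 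The peripherals supplied by \cite{dahmanili:relative,ghosh:relative} are the maximal \emph{polynomially-growing} sub-mapping-tori, and the real content of this case is proving that these are trivial bundles. The paper does this by splitting an iterate of any polynomially growing path, via Levitt's lemma \cite{levitt:counting}, into invariant edges, Nielsen paths, and exceptional paths with no cancellation between iterates; with no linear strata the exceptional paths disappear and the path is forced to be Nielsen. Your argument has no substitute for this step.

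In case~3 there are two problems. First, the hypothesis still permits exponential strata, and an admissible decomposition cannot be imposed on all of $G$ while they are present; the paper first invokes the closure of colourable HHGs under relative hyperbolicity \cite{behrstockhagensisto:hierarchically:2} to reduce to linearly growing $\phi$, and only then splits. Your appeal to ``exponential dynamics providing a non-elementary hyperbolic quotient'' is misplaced: after the reduction there are no exponential strata, the vertex groups are $F'\times\Z$ with $F'$ free of rank at least two, and the non-elementary hyperbolic quotient is simply $F'$. Second, your description of the decomposition is garbled: the $\Z^2$ generated by a Nielsen cycle and $t$ is an \emph{edge} group, not the centre of a vertex group (Definition~\ref{def:admissible} requires $Z_v\cong\Z$). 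The splitting actually comes from the parabolic orbits theorem \cite{cohenlustig:very}, as in \cite{andrewmartino:free-by-cyclic}: a canonical $\phi$-invariant splitting of $F$ over the Nielsen cycles supporting linear strata, whose mapping torus is the desired graph of groups, with the one-linear-stratum-per-cycle hypothesis then giving the non-commensurability axiom as you anticipated.
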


\begin{proof}
There being no Nielsen cycles is equivalent to $\phi$ being \emph{atoroidal}, so Brinkmann's theorem states that $G$ is hyperbolic in this case \cite{brinkmann:hyperbolic}. It was proved by Hagen--Wise that hyperbolic free-by-cyclic groups are cocompactly cubulated \cite{hagenwise:cubulating:irreducible,hagenwise:cubulating:general}.

\mk

More generally, $G$ is (virtually) hyperbolic relative to its maximal polynomially-growing sub--mapping-tori \cite[Thm~4]{dahmanili:relative}, \cite[Thm~3.11]{ghosh:relative}. We claim that if there are no linear strata, then all such mapping tori are trivial bundles. Let $\gamma\subset\Gamma$ be an edge path such that $f_\#$--iterates of $\gamma$ grow in length polynomially. By a repeated application of \cite[Lem.~6.5]{levitt:counting}, we can split some iterate $f^m_\#\gamma$ as $f^m_\#\gamma=\gamma_1\dots\gamma_n$ in such a way that there is no cancellation between $f^r_\#\gamma_i$ and $f^r_\#\gamma_{i+1}$ for any $i$ or $r$, and such that each $\gamma_i$ is one of: an edge of $\Gamma$; a Nielsen path; or an \emph{exceptional path}. An exceptional path is a path of the form $e_1p^ke_2^{-1}$, where $e_1$ and $e_2$ are (possibly equal) linear strata whose suffixes are both powers of $p$.

Since $f_\#$--iterates of $\gamma$ grow polynomially, no $\gamma_i$ can have exponentially growing iterates. Since there are no linear strata, every $\gamma_i$ that is an edge of $\Gamma$ is an invariant edge. Moreover, there can be no exceptional paths in $\Gamma$. Thus each $\gamma_i$ is either an invariant edge or a Nielsen path. Since there is no cancellation between iterates of the $\gamma_i$, this shows that $\gamma$ is a Nielsen path. Hence all polynomially-growing sub--mapping-tori of $G$ are trivial bundles, so $G$ is hyperbolic relative to groups of the form $F'\times\Z$ where $F'$ is free, as claimed. We observe for later that \cite[Thm~9.1]{behrstockhagensisto:hierarchically:2} implies $G$ is a colourable hierarchically hyperbolic group.

\mk

Finally, suppose that all non-exponential strata are linear, but that each Nielsen cycle supports at most one linear stratum. Since the property of being a colourable hierarchically hyperbolic group is preserved by relative hyperbolicity \cite[Thm~9.1]{behrstockhagensisto:hierarchically:2}, we can use \cite[Thm~4]{dahmanili:relative} or \cite[Thm~3.11]{ghosh:relative} to assume that $\phi$ is linearly growing. There is necessarily at least one linear stratum. 

We start by describing a graph-of-groups decomposition of $G$, following the discussion of \cite[Prop.~5.2.2]{andrewmartino:free-by-cyclic}. The parabolic orbits theorem \cite[Thm~13.2]{cohenlustig:very} (see also \cite[Thm~2.4.9]{andrewmartino:free-by-cyclic}) states that there is a unique simplicial $F$--tree preserved by (a power of) $\phi$. This gives a splitting of $F$. The vertex groups in this splitting are free of rank at least two. The edge groups are exactly the cyclic subgroups generated by Nielsen cycles supporting linear strata (see \cite[Lem.~11.5]{cohenlustig:very}, \cite[Def.~4.36]{bestvinafeighnhandel:tits:2}), which are maximal cyclic subgroups. (Strictly speaking, in order to correctly respect basepoints, they are generated by conjugates of such Nielsen cycles by paths in a fixed spanning tree of $\Gamma$.) The edge inclusions carry the information of the suffixes of these linear strata. By the assumption that no Nielsen cycle supports more than one linear stratum, no two adjacent edge groups can be equal.

Taking the $\phi$--mapping-torus of this splitting of $F$ yields a splitting of (a finite-index subgroup of) $G$. The fact that there is at least one linear stratum implies that there is at least one edge in this splitting. Each edge group is a maximal $\Z^2$ subgroup, namely the mapping torus of a Nielsen cycle in the splitting of $F$, with the inclusion maps corresponding to pulling its basepoint across the edge. The vertex groups are maximal $F'\times\Z$ subgroups; see \cite{andrewmartino:free-by-cyclic}, and also relevant discussion in \cite[\S2.4]{dahmanitouikan:unipotent}. % Also see Proposition~\ref{prop:split-over-tori} in the GGH paper

We show that this graph of groups $\cal G$ is admissible (Definition~\ref{def:admissible}). The fact that each Nielsen cycle supports at most one linear stratum directly implies that the inclusions of distinct edge groups are not commensurable: they represent independent geodesics in the vertex group of the splitting of $F$. For similar reasons, the maximality of the edge groups ensures that the image of an edge group is not commensurable with any of its conjugates. Lastly, let $E$ be an edge group. The centres of the incident vertex groups correspond to the fibres of the respective mapping tori. Since $E$ arises from a Nielsen cycle supporting a linear stratum, the edge inclusions identify the fibre direction of the edge group with the fibre direction of exactly one of the incident vertex groups. Thus the preimages in $E$ of the centres of the two incident edge groups generate a non-cyclic, hence finite-index, subgroup. We have shown that $\cal G$ is admissible.

Having found that $G$ virtually splits as an admissible graph of groups, \cite[Thm~4]{hagenrussellsistospriano:equivariant} shows that $G$ is virtually a colourable hierarchically hyperbolic group. Though it is not explicitly stated there, the colourability can be directly seen from the construction of the hierarchy in \cite[\S5]{hagenrussellsistospriano:equivariant}, with essentially the same reasoning as in \cite[Thm~6.15]{hagenmartinsisto:extra}. 

\mk

In all three cases, $G$ is a colourable hierarchically hyperbolic group. According to \cite[Thm~B]{petyt:mapping}, every colourable hierarchically hyperbolic group is quasiisometric to a finite-dimensional CAT(0) cube complex. This concludes the proof.
\end{proof}

It is already known that all graph manifold groups are colourable hierarchically hyperbolic groups \cite{hagenrussellsistospriano:equivariant}, and also that they are quasicubical \cite{kapovichleeb:3manifold,hagenprzytycki:cocompactly}. In-keeping with this, we observe that all free-by-cyclic graph manifold groups fit into the third case of Proposition~\ref{prop:few_linear}. Indeed, by \cite[Thm~11.1]{behrstockdrutumosher:thick}, graph manifolds are \emph{thick of order 1} in the sense defined in that paper. According to \cite[Cor.~7.9]{behrstockdrutumosher:thick} and either \cite[Thm~4]{dahmanili:relative} or \cite[Thm~3.11]{ghosh:relative}, this means that free-by-cyclic graph manifold groups are polynomially growing, and hence linearly growing by \cite[Thm~1.2]{hagen:remark}. Since we are considering a manifold, no Nielsen cycle can support two linear strata.

%   NOTE
% For a graph manifold to be free-by-cyclic it has to have boundary, because otherwise it'll have cohomological dimension 3.
% The pieces of the JSJ have to be mapping tori of periodic mapping classes: pseuoAnosovs would give you hyperbolic pieces by Thurston, and they aren't Seifert. So after passing to a power, your surface automorphism is trivial outside a collection of disjoint simle closed curves, each of which is being Dehn twisted along. You get nontrivial gluings, because if you take a fibre based on one side of the twist and then push it through, you end up with something that is twisted at one end but not the other.

It is natural to ask whether more of the groups considered in Proposition~\ref{prop:few_linear} are cocompactly cubulated. Results outside the hyperbolic setting seem to be fairly limited. Hagen--Przytycki characterised which graph manifolds are cocompactly cubulated \cite{hagenprzytycki:cocompactly}. Button established which \emph{tubular} groups (see Section~\ref{sec:tubular}) are free-by-cyclic \cite[Prop.~2.1]{button:tubular}, and cubulation of tubular groups is well understood \cite{wise:cubular,woodhouse:classifying:virtually}. However, it is unknown whether all toral relatively hyperbolic free-by-cyclic groups are cocompactly cubulated for example.

\mk

We now turn to the case where $\phi$ has more linear strata. Our goal will be to show that the equivalent of Proposition~\ref{prop:few_linear} fails in a strong way for many such $\phi$, by finding RBFs. We shall need the following result, which seems to be known to experts. We are grateful to Monika Kudlinska for informing us of it, and to Jean Pierre Mutanguha for sharing his preprint \cite{mutanguha:onpolynomial} with us. Recall that all free groups considered here are finitely generated.

\begin{proposition}[{\cite[Lem.~4.1,~4.2]{mutanguha:onpolynomial}}] \label{prop:undistorted}
Free-by-cyclic subgroups of free-by-cyclic groups are quasiisometrically embedded.
\end{proposition}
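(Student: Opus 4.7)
The plan is to use the semidirect product structures of both $G$ and $H$ to compare word metrics, reducing the statement to undistortion of subgroups of free groups, modulo showing that $H\cap F$ is finitely generated. First I would set up notation and dispatch the trivial case. Let $\pi\colon G\to G/F=\mathbf{Z}$ be the natural projection. If $\pi|_H$ is trivial, then $H\le F$ is a finitely generated subgroup of a free group, hence free; since the only free group that is finitely-generated-free-by-cyclic is infinite cyclic (any $F_n$ with $n\ge 2$ has no finitely generated normal subgroup with $\mathbf{Z}$ quotient), $H$ is cyclic, and cyclic subgroups of free-by-cyclic groups are undistorted by Button's theorem on abelian subgroups. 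Otherwise, after replacing $G$ by the finite-index subgroup $F\rtimes_{\phi^k}\mathbf{Z}$ accommodating the image of $\pi|_H$, I may assume $\pi|_H$ is surjective. Choosing $s\in H$ with $\pi(s)=1$ and writing $s=f_0 t$ in $G$, replacing the cyclic generator $t$ of $G$ by $s$ rewrites $G$ as $F\rtimes_{\phi'}\langle s\rangle$ with $\phi'=\mathrm{conj}(f_0)\circ\phi\in\mathrm{Aut}(F)$. Letting $K:=H\cap F=\ker\pi|_H$, the fact that $K$ is normal in $H=K\rtimes\langle s\rangle$ means $\phi'$ preserves $K$.

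Granting that $K$ is finitely generated (the main obstacle, discussed below), I would then compare word metrics directly. Being a finitely generated subgroup of the free group $F$, $K$ is quasi-isometrically embedded in $F$, so $|k|_K\asymp|k|_F$ for all $k\in K$. Every $h\in H$ has a normal form $h=ks^n$ with $k\in K$, $n\in\mathbf{Z}$, and standard semidirect-product word-length estimates yield
\[
|h|_H\;\asymp\;|n|+\min_{j\in\mathbf{Z}}\bigl(|j|+|s^{-j}ks^{j}|_K\bigr),\qquad |h|_G\;\asymp\;|n|+\min_{j\in\mathbf{Z}}\bigl(|j|+|s^{-j}ks^{j}|_F\bigr).
\]
The conjugates $s^{-j}ks^{j}$ are literally the same elements in both expressions, and they lie in $K$ by $\phi'$-invariance, so applying $|\cdot|_K\asymp|\cdot|_F$ on $K$ shows the two minima agree up to multiplicative and additive constants. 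Hence $|h|_H\asymp|h|_G$, which is exactly the required quasi-isometric embedding.

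The hard part will be showing that $K$ is finitely generated. This is not automatic: a finitely generated group can have surjections to $\mathbf{Z}$ whose kernels are infinitely generated (this is controlled by the BNS invariant $\Sigma^1$), and a priori $\pi|_H$ could fail to lie in $\Sigma^1(H)$. While the hypothesis that $H$ is free-by-cyclic provides some fibration of $H$ with finitely generated free kernel, this fibration need not be proportional to $\pi|_H$, so the finite generation of $K$ does not follow formally. I expect that Mutanguha's Lemmas~4.1 and~4.2 supply the needed control by exploiting the train-track dynamics of $\phi$ on $F$: the ambient structure of $G$ constrains how an abstractly free-by-cyclic $H$ can embed, forcing $\pi|_H$ to be commensurable with the canonical fibration of $H$ and thereby making $K$ a finitely generated free subgroup of $F$.
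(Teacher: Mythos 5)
The paper offers no proof of this proposition --- it is imported directly from Mutanguha's preprint --- so there is no in-house argument to compare yours with; I evaluate the proposal on its own terms. It contains a fatal gap beyond the one you flag.

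The unflagged gap is the ``standard semidirect-product word-length estimate''
\[
|h|_G\;\asymp\;|n|+\min_{j}\bigl(|j|+|s^{-j}ks^{j}|_F\bigr),
\]
which is false. Take $F=\langle a,b,c\rangle$ with $\phi(a)=ab$, $\phi(b)=a$, $\phi(c)=c$, and set $k_m=\phi^{m}(aca^{-1})\,\phi^{-m}(aca^{-1})\in F$. Writing $k_m=t^{m}(aca^{-1})t^{-2m}(aca^{-1})t^{m}$ shows $|k_m|_G\le 4m+6$. On the other hand, $\phi^{j}(k_m)=u\,c\,u^{-1}\,v\,c\,v^{-1}$ with $u=\phi^{j+m}(a)$ and $v=\phi^{j-m}(a)$ lying in $\langle a,b\rangle$; since free reduction cannot pass the letters $c$, the reduced length of $\phi^{j}(k_m)$ is at least $|u|_F+|v|_F$, and because both $\phi|_{\langle a,b\rangle}$ and its inverse grow exponentially while $\max(|j+m|,|j-m|)\ge m$, this is at least $\lambda^{m}$ for some $\lambda>1$, for \emph{every} $j$. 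So the right-hand side of your formula is exponential in $m$ while the left-hand side is linear. The correct description of word length in $F\rtimes_\phi\mathbf{Z}$ must allow $k$ to be cut into several pieces, each conjugated by a \emph{different} power of $t$ --- and this is exactly where the proposition stops being routine: for $k\in K$ the optimal such decomposition in $G$ may use pieces lying in $F\smallsetminus K$, so it does not transfer to a decomposition computable inside $H$. Showing that it can nevertheless be replaced by a comparable decomposition intrinsic to $H$ is the actual content of the cited lemmas, not a normal-form manipulation.

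Combined with the finite generation of $K=H\cap F$, which you rightly flag as unproven (the character $\pi|_H$ has no a priori reason to lie in $\Sigma^1(H)$, and the given fibration of $H$ need not be proportional to it), the proposal therefore reduces the statement to its two genuinely hard points and resolves neither. The reduction to the case $\pi|_H$ surjective and the treatment of the degenerate case $H\le F$ are fine.
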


This includes the degenerate cases of cyclic groups and cyclic-by-cyclic groups. 

\begin{definition}[Linear $\Gamma$--path]
We refer to a path $\gamma\subset\Gamma$ as a \emph{linear $\Gamma$--path} if $\gamma$ can be decomposed as $\gamma=\gamma_1\dots\gamma_n$,  where each $\gamma_i$ is either a Nielsen path or a linear stratum.
\end{definition}

If $\gamma$ is a linear $\Gamma$--path, then $f_\#$--iterates of $\gamma$ have length that grows at most linearly.

Next we describe the mapping tori inside $G$ that arise from invariant and linear strata. For an (oriented) edge $e$ of a graph, we shall write $e^-$ for its initial vertex and $e^+$ for its terminal vertex.

\bsh{Nielsen-cycle quasiflats} \label{sh:nielsen_tori}
Let $M$ be the mapping torus of $f$, with universal cover $\tilde M$. If $p$ is a Nielsen cycle, then the mapping torus of $p$ is a $\Z^2$ subgroup of $G$, giving a quasiflat $Q\subset\tilde M$ by Proposition~\ref{prop:undistorted}. More precisely, let $\tilde p\subset\tilde M$ be a quasiline covering $p$. Pushing $\tilde p$ along fibres one step yields a path that covers $f(p)$. By tightening each lift of $f(p)$ to a lift of $f_\#(p)=p$, we obtain another quasiline covering $p$. Iterating this yields the quasiflat $Q$, which can be naturally thought of as having two axes: the ``$p$--axis'' and the ``fibre axis''. See Figure~\ref{fig:Q}.
\esh

\begin{figure}[ht]
\includegraphics[width=52mm, trim = 0 3mm 0 6mm]{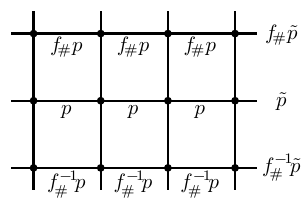}
\caption{A quasiflat $Q$ coming from the mapping torus of a Nielsen cycle $p$.} \label{fig:Q} 
\end{figure}

\bsh{Linear gluings} \label{sh:linear_gluings}
Suppose that $e$ is a linear stratum with suffix $p^n$. Let $e'$ be a subsegment of $e$ containing $e^+$ but not $e^-$. The universal cover of the mapping torus of $e'\cup p$ consists of the quasiflat $Q$ with a parallel family of strips attached. Since $f_\#(e)=ep^n$, the strips are glued to $Q$ along lines of slope $\frac1n$ with respect to the axes of Item~\ref{sh:nielsen_tori} (in the sense that moving one step in the fibre direction moves $n$ steps in the $p$ direction), as illustrated in Figure~\ref{fig:linear_gluings}. To see this when $e$ is a loop, write the fundamental group of the mapping torus of $e\cup p$ as $\sgen{p,e,t\,|\,tpt^{-1}=p,tet^{-1}=ep^n}$ to find that $e^{-1}te=p^nt$. In this case, $e$ is the stable letter of an HNN extension of $\sgen{p,t}\cong\Z^2$.

On the other hand, suppose that $e$ is a linear stratum with $e^-\in p$, a Nielsen cycle, and with suffix $p'$. In the universal cover of the mapping torus of $e\cup p\cup p'$, the quasiflat $Q$ again has a parallel family of strips attached, this time along the $f_\#$--iterates of the vertex $e^-$ of $p$. Since $e$ is a non-exponential stratum, $f(e^-)=e^-$, so these strips are glued to $Q$ along fibre lines. 
\esh

\begin{figure}[ht]
\includegraphics[width=38mm, trim = 0 3mm 0 4mm]{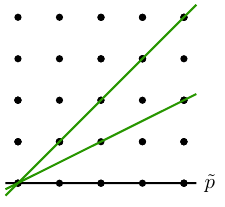}
\caption{Gluing lines in $Q$ from linear strata with suffixes $p$ and $p^2$.} \label{fig:linear_gluings} 
\end{figure}

\bsh{Branching from linear strata} \label{sh:branching}
Let $p$ be a Nielsen cycle, with corresponding quasiflat $Q\subset\tilde M$ as constructed in Item~\ref{sh:nielsen_tori}. We describe how a linear stratum $e$ with an endpoint in $p$ gives rise to half-flats branching off $Q$. Proposition~\ref{prop:undistorted} shows that these configurations are undistorted.

First suppose that $e^-\in p$, and let $p'$ be the suffix of $e$. As described in Item~\ref{sh:linear_gluings}, there are strips glued to $Q$ along the fibre lines of lifts of $e^-$. Let $E$ be such a strip. The boundary line $E^+$ not contained in $Q$ is part of a quasiflat $Q'$ corresponding to $p'$. Indeed, it is glued diagonally to $Q'$, as described in Item~\ref{sh:linear_gluings}. Taking a half-quasiflat in $Q'$ whose boundary is $E^+$ gives a half-quasiflat branching off $Q$ along the gluing line of $E$. See Figure~\ref{fig:out_flat}.

\begin{figure}[ht]
\includegraphics[width=45mm, trim = 0 5mm 0 3mm]{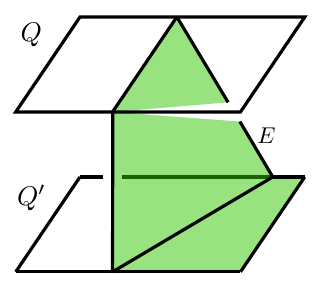}
\caption{A half-quasiflat coming from a linear stratum $e$ with $e^-\in p$.} \label{fig:out_flat} 
\end{figure}

Now suppose that $p$ supports $e$. If there is a Nielsen cycle $p'$ such that $e^-\in p'$, then the situation is exactly the reverse of the one just described, and we can take a half-quasiflat in $Q'$ whose boundary is $E^-$ to obtain a half-quasiflat branching off $Q$ along the gluing line of $E$. 

More generally, suppose that there is a linear $\Gamma$--path $\gamma$ from $e^-$ to a vertex of some Nielsen cycle $p'$. (There need not be more than one linear stratum in $\gamma$.) Since the lengths of the $f_\#$--iterates of $\gamma$ grow in length linearly, we can take a half-quasiflat $H'\subset Q'$ coming from $p'$ that is separated from $E$ by a subspace quasiisometric to the graph under the real function $x\mapsto|x|$. See Figure~\ref{fig:in_flat}. The union of these pieces is a half-quasiflat branching off $Q$ along the gluing line of $E$. Observe that, by a symmetric argument, there is also a half-quasiflat glued to $Q'$ along the terminal part of $\gamma$.

\begin{figure}[ht]
\includegraphics[width=45mm, trim = 0 4mm 0 3mm]{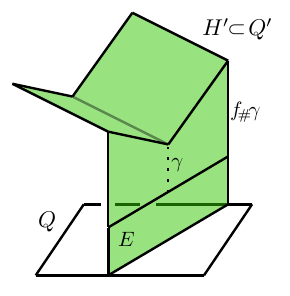}
\caption{A half-quasiflat coming from a linear stratum $e$, supported on $p$, with a linear $\Gamma$--path $\gamma$ from $e^-$ to some Nielsen cycle $p'$.} \label{fig:in_flat} 
\end{figure}

If there is no linear $\Gamma$--path from $e^-$ to a Nielsen cycle, then there is no way to extend $E$ to a half-quasiflat glued to $Q$. For example, let $F=\sgen{a,b,cdc^{-1}}$ and consider the free-by-cyclic group corresponding to the automorphism 
\[
\phi \,=\, \begin{cases} a\mapsto b, & c\mapsto cd \\ b\mapsto ba, & d\mapsto d.\end{cases}
\]
The restriction $\phi|_{\sgen{a,b}}$ gives a hyperbolic free-by-cyclic group $G_{a,b}$ by Brinkmann's theorem \cite{brinkmann:hyperbolic}. Edge-strips in $F\rtimes_\phi\Z$ corresponding to the linear stratum $c$ therefore join quasiflats coming from the Nielsen path $d$ to hyperbolic spaces corresponding to $G_{a,b}$.
\esh

In view of the above discussion, we introduce the following terminology.

\begin{definition}[Internal, source] \label{def:internal}
A linear stratum $e$ is \emph{internal} if there is a linear $\Gamma$--path from $e^-$ to a vertex of some Nielsen cycle. A vertex $v\in\Gamma$ is a \emph{source} if there is a linear stratum $e$ with $e^-=v$. We say that a Nielsen cycle \emph{has a nearby source} if there is a Nielsen path from one of its vertices to some source of $\Gamma$.
\end{definition}

By Item~\ref{sh:branching}, being internal is exactly what is needed to be able to ``extend backwards'' to a half-quasiflat, and a Nielsen cycle having a nearby source exactly means that there are half-quasiflats branching off its fibre lines. It can happen that a Nielsen cycle $p$ is the suffix of a linear stratum $e$ witnessing that a vertex of $p$ is a source, for instance if $e^-=e^+\in p$. 

\begin{definition}[Rich linearity] \label{def:rich}
We say that a free-by-cyclic group $G$ has \emph{rich linearity} if it can virtually be written as $G=F\rtimes_\phi\Z$, such that some IRTT structure of $\phi$ has a Nielsen cycle $p$ that either
\begin{itemize}
\item   supports three internal linear strata, or 
\item   supports two internal linear strata and has a nearby source.
\end{itemize}
\end{definition}

We are now in a position to prove the main result of this section. 

\begin{theorem} \label{thm:two_linear}
Free-by-cyclic groups with rich linearity do not admit coarse medians.
\end{theorem}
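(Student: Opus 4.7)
The plan is to combine the dimension bound of Corollary~\ref{thm:dimension} with the RBF obstruction of Theorem~\ref{thm:rbf}. A free-by-cyclic group $G = F \rtimes_\phi \Z$ has a two-dimensional $K(G,1)$, namely the mapping torus of a finite graph realising $\phi$, so $\vcd G \le 2$. Corollary~\ref{thm:dimension} therefore gives $\rk G \le 2$ whenever $G$ admits a coarse median, and I will derive a contradiction by exhibiting a quasi-isometrically embedded 2-RBF in $G$, which is forbidden at this rank by Theorem~\ref{thm:rbf}.

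After passing to a finite-index subgroup, assume $\phi$ admits an IRTT structure witnessing rich linearity, with distinguished Nielsen cycle $p$. The quasiflat $Q \subset \tilde M$ from Item~\ref{sh:nielsen_tori}, together with its $p$-axis and fibre axis, will serve as the base flat of the 2-RBF. For each internal linear stratum $e$ supported on $p$ with suffix $p^{n_e}$, Items~\ref{sh:linear_gluings} and~\ref{sh:branching} provide a coarsely dense family of half-quasiflats glued to $Q$ along parallel lines of slope $1/n_e$ with respect to these axes. Since distinct linear strata have distinct suffixes (Item~\ref{sh:irtt}), distinct internal strata supported on $p$ contribute pairwise non-parallel and non-fibre gluing directions. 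In the nearby-source alternative of Definition~\ref{def:rich}, the linear stratum $e$ whose initial vertex is joined to $p$ by a Nielsen path is automatically internal; treating the Nielsen path as a bounded-width corridor in the argument of Items~\ref{sh:linear_gluings} and~\ref{sh:branching} gives a coarsely dense family of half-quasiflats glued to $Q$ along fibre lines. Since the fibre direction is linearly independent of every slope $1/n$, both alternatives of Definition~\ref{def:rich} supply three pairwise linearly independent families of half-quasiflats densely attached to $Q$.

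Now let $\Gamma' \subset \Gamma$ be the subgraph formed by the edges, Nielsen cycles and Nielsen paths used in this construction. After replacing $\phi$ by a suitable iterate, the mapping torus of $f^k|_{\Gamma'}$ is a sub-mapping-torus of $M$; its fundamental group is a free-by-cyclic subgroup $H \le G$ and is quasi-isometrically embedded in $G$ by Proposition~\ref{prop:undistorted}. Inside the universal cover of this sub-mapping-torus, the union of $Q$ with the three families of half-quasiflats just constructed is, up to uniform quasi-isometry, a 2-RBF in the sense of Definition~\ref{def:rbf}. This produces a quasi-isometrically embedded 2-RBF in $G$, contradicting Theorem~\ref{thm:rbf} and showing that $G$ admits no coarse median.

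The step I expect to demand the most care is verifying that the assembled configuration really is quasi-isometric to a 2-RBF. In particular, one must check that half-quasiflats from different families intersect $Q$ only along their prescribed boundary lines, that the gluing lines form a coarsely dense set in $\R$ along each of the three directions, and that the bounded-corridor extension used in the nearby-source case does not corrupt the half-flat structure. These should all follow from the explicit gluing pictures in Items~\ref{sh:linear_gluings} and~\ref{sh:branching} together with the tree-graded decomposition of the sub-mapping-torus, but carefully tracking the quasi-isometry constants is the main bookkeeping obstacle.
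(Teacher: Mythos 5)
Your proposal follows essentially the same route as the paper's proof: the rank bound $\rk G\le 2$ from Corollary~\ref{thm:dimension} via the two-dimensional mapping torus, the construction of the base quasiflat $Q$ from the Nielsen cycle $p$ via Item~\ref{sh:nielsen_tori}, the three pairwise non-parallel branching directions from internal linear strata (using distinct suffixes, Items~\ref{sh:linear_gluings} and~\ref{sh:branching}) or a nearby source, undistortedness via Proposition~\ref{prop:undistorted}, and the contradiction with Theorem~\ref{thm:rbf}. The argument is correct and matches the paper's.
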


\begin{proof}
As $G$ has geometric dimension two, Corollary~\ref{thm:dimension} shows that if $G$ admits a coarse median, then it admits one of rank at most two. Our goal is to find a 2--RBF in $G$, for then Theorem~\ref{thm:rbf} will show that $G$ can admit no such coarse median. There is no loss in replacing $\phi$ by a positive power, because this replaces $G$ by a finite-index subgroup. We can therefore let $G=F\rtimes_\phi\sgen t$ witness rich linearity.

% There is a sub--mapping-torus $M'$, contained in the mapping torus of $\Gamma$, that comes from a linearly-growing restriction of $\phi$ and supports all the data of the assumptions of the theorem; and by Proposition~\ref{prop:undistorted} we are free to replace $G$ by $\pi_1M'$. Let us be more explicit. Let $\Lambda_0$ consist of $p$ together with all linear strata incident to $p$. (Strictly speaking, $\Lambda_0$ has a loop labelled by $p$, and an immersion into $\Gamma$.) For each internal linear stratum $e$ supported on $p$, let $\gamma_e$ be a linear $\Gamma$--path from $e^-$ to some Nielsen cycle $p_e$. Let $\Lambda_1$ be the union of $\Lambda_0$ with all such $\gamma_e\cup p_e$. If $p$ has a nearby source $v=d^-$, where $d$ is a linear stratum with suffix $p_d$, then let $\gamma_d$ be a Nielsen path from $p$ to $v$. Let $\Lambda$ be the union of $\Lambda_1$ with $\gamma_d\cup d\cup p_d$. By construction, $\phi$ is a linearly growing autmorphism of $\Lambda$, and $\pi_1\Lambda\rtimes_\phi\Z$ is a free-by-cyclic subgroup of $G$.

As described in Item~\ref{sh:nielsen_tori}, the Nielsen cycle $p$ gives rise to a quasiflat $Q$ in the universal cover of the mapping torus of $\Gamma$. Item~\ref{sh:branching} shows that each internal linear stratum supported on $p$ yields a half-quasiflat branching off $Q$ along a line not parallel to the fibre direction. Moreover, since distinct linear strata have distinct suffixes, Item~\ref{sh:linear_gluings} shows that no two of these branching lines are parallel. Item~\ref{sh:branching} also shows that if $p$ has a linear source then there is a half-quasiflat branching off $Q$ along a fibre-line. 

Thus, in either of the two cases of the statement, there are three non-parallel directions in $Q$ from which half-quasiflats branch. Since $Q$ is cocompact, this shows that $G$ contains a 2--RBF. As described above, this shows that $G$ cannot have a coarse median.
\end{proof}

% \begin{figure}[ht]
% \vspace{-8mm}
% \begin{center} \makebox[0pt]{\begin{minipage}{1.35\textwidth}
% \begin{subfigure}{0.3\textwidth}
%     \includegraphics[width=69.35mm]{Figures/flat.pdf}
%     \caption*{\hspace{10mm} Part of the quasiflat $Q$. \\ { }\quad { }}
% \end{subfigure}
% \hspace{15mm}
% \begin{subfigure}{0.3\textwidth}
%     \includegraphics[width=49.4mm]{Figures/in-gluing.pdf}
%     \caption*{\hspace{-20mm} Gluing lines in $Q$ from strata sup- \\ \hspace{-20mm} ported on $p$, with $n_1=1$, $n_2=2$.}
% \end{subfigure}
% \hspace{-7mm}
% \begin{subfigure}{0.3\textwidth}
%     \includegraphics[width=66.5mm]{Figures/out-gluing.pdf}
%     \caption*{Gluing line in $Q$ from a nearby source~$v$, illustrated with $v\in p$.}
% \end{subfigure}
% \end{minipage}}\end{center}
% \caption{Parts of the construction of the 2--RBF in Theorem~\ref{thm:two_linear}. The base flat $Q$ has half-quasiflats glued along the green lines.} 
% \label{fig:fbz_rbf}
% \end{figure}

\bsh{Example} \label{eg:more_than_gersten}
The following is a simple example that is not covered by Gersten's argument from \cite{gersten:automorphism} but that does have rich linearity. Let $F=\sgen{a,cbc^{-1},cd,ce}$, and consider the free-by-cyclic group corresponding to the automorphism
\[
\phi \,=\, 
\begin{cases} 
a\mapsto a,\quad c\mapsto cb, & d\mapsto da, \\
b\mapsto b, & e\mapsto ea^2.
\end{cases}
\]
\esh

We finish this section by discussing an interesting possible relation between the existence of richly branching flats and a converse to Theorem~\ref{thm:two_linear}. 

\bsh{Remark} \label{sh:converse}
Theorem~\ref{mthm:fbz_no_median} was proved by constructing RBFs, and it seems plausible that our construction is essentially the only way to do so. Indeed, if there is an RBF with base flat $B$, then the necessary quantity of branching from $B$ should only be possible in the presence of linear strata. That these linear strata branch off $B$ should then restrict $B$ to being built from Nielsen cycles, and then the richness of the branching should force the existence of a Nielsen cycle witnessing rich linearity.
\begin{itemize}
\item   Does the existence of an RBF in a free-by-cyclic group imply rich linearity?
\end{itemize}

Next, one can ask what possible obstructions there are to being able to find a coarse median on a free-by-cyclic group. To the authors' knowledge, it could be the case that the existence of RBFs is the only obstruction. Suggestively, the linear part of the geometry of a free-by-cyclic group with no quadratic strata bears similarity with that of \emph{undistorted tubular groups}, considered in Section~\ref{sec:tubular}, for which RBFs are indeed the only obstruction (Theorem~\ref{thm:tubular_special}).
\begin{itemize}
\item   If a free-by-cyclic group does not admit a coarse median, must it have an RBF?
\end{itemize}
Positive answers to the above two questions would imply the converse of Theorem~\ref{thm:two_linear}. Note that the existence of a coarse median is an invariant of the group: it does not depend on a choice of fibration.

All quasicubical metric spaces have coarse medians, but the converse is not true in general. However, in the setting of finitely generated groups, examples that distinguish the classes are sorely lacking. In many cases, a coarse median can actually be promoted to quasicubicality \cite{hagenpetyt:projection,petyt:mapping}. We expect that free-by-cyclic groups do not distinguish the classes, which explains the phrasing of Question~\ref{qn:fbz_qq}.
\begin{itemize}
\item   If a free-by-cyclic group admits a coarse median, is it necessarily quasicubical?
\end{itemize}
Assuming positive answers to all three questions, we would have a dichotomy: either $G$ has rich linearity, in which case $G$ has no coarse median; or it does not, in which case $G$ is quasicubical.
\esh

%%%%%%%%%%%%%%%%%%%%%%%%%%%%%%%%%%%%%%%%%%%%%%%%%%
\section{Tubular groups} \label{sec:tubular}

In this section we consider \emph{tubular groups}. Our goal will be to understand which tubular groups admit coarse medians.

\begin{definition}[Tubular group]
A \emph{tubular group} is the fundamental group of a graph of groups with $\mathbf Z^2$ vertices and $\mathbf Z$ edges. 
\end{definition}

Each tubular group $G$ has an associated graph of spaces $\bar X$, with torus vertices and circle edges, such that $G=\pi_1\bar X$. We enumerate the vertex tori $\bar F_1,\dots,\bar F_n$, and the edge circles $\bar E_1,\dots,\bar E_m$. The universal cover $X$ of $\bar X$ is a tree of spaces with $2$--flat vertex spaces and line edge spaces. Given a vertex flat $F\subset X$, we shall also write $\bar F$ for the vertex torus in $\{\bar F_1,\dots,\bar F_n\}$ covered by $F$.  

\begin{definition}[Excursion decomposition]   \label{def:decomposition}
Let $F$ be a vertex flat of $X$. A path $\delta:I\to X$ is an \emph{$\bar E_j$-excursion on $F$} if it has initial and terminal segments both traversing an edge $E$ covering $\bar E_j$, and only the endpoints of $\delta$ meet $F$. A path $\gamma$ in $X$ with endpoints in some vertex flat $F$ has a unique \emph{excursion decomposition} $\gamma=\sigma_1\delta_1\sigma_2\cdots \delta_n\sigma_{n+1}$, where $\sigma_i\subset F$ and $\delta_i$ is an excursion.
\end{definition}

\begin{lemma}
\label{lem:cutPaste}
Let $\gamma$ be a geodesic in $X$ joining points in a vertex flat $F$. For each $\bar E_j$ incident to $\bar F$, there is at most one $\bar E_j$--excursion in the excursion decomposition of $\gamma$.
\end{lemma}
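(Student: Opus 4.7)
I would prove this by contradiction: assume $\gamma$ contains two $\bar E_j$-excursions $\delta_1$ and $\delta_2$, and construct a strictly shorter path with the same endpoints.

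The key structural observation is that each excursion uses only one edge space. In the Bass--Serre tree, the projection of an excursion is a closed walk at $v_F$ whose interior avoids $v_F$ (otherwise the excursion's interior would touch $F$); in a tree, such a walk must use the same edge to leave and return. Hence $\delta_i$ uses a single edge space $E_i$ over $\bar E_j$, meeting $F$ in a line $L_i$ parallel to the axis $L\subset F$ of the cyclic subgroup $\pi_1\bar E_j\hookrightarrow\pi_1\bar F\cong\mathbf Z^2$. Write $F^{(i)}$ for the opposite vertex flat of $E_i$ and $L_i^*=E_i\cap F^{(i)}$; the strip structure $E_i\cong\mathbf R\times[0,1]$ identifies $L_i$ with $L_i^*$ isometrically along the $\mathbf R$-direction.

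Case (i), $E_1=E_2=:E$. All four excursion endpoints $p_1,q_1,p_2,q_2$ then lie on $L_1$, and the intermediate segment $\sigma\subset F$ of $\gamma$ from $q_1$ to $p_2$ is a straight segment along $L_1$. I would replace $\delta_1\sigma\delta_2$ with a single combined excursion: cross $E$ from $p_1$ to $p_1^*$, perform the interior of $\delta_1$ in the subtree at $v_{F^{(1)}}$, travel along $L_1^*$ directly from $q_1^*$ to $p_2^*$ in $F^{(1)}$, perform the interior of $\delta_2$, and exit via $E$ to $q_2$. Since $L_1$ and $L_1^*$ are isometrically identified we have $d_{F^{(1)}}(q_1^*,p_2^*)=d_F(q_1,p_2)$, so the new path is strictly shorter than $\gamma$ by exactly $|q_1^* q_1|_E + |p_2 p_2^*|_E>0$ --- the two strip-thickness crossings that are skipped --- contradicting that $\gamma$ is a geodesic.

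Case (ii), $E_1\neq E_2$. Now $L_1,L_2$ are distinct parallel lines in $F$. Since $\pi_1\bar F/\pi_1\bar E_j$ acts transitively on lines over $\bar E_j$, there is $s\in\pi_1\bar F$ with $sL_1=L_2$; because each line in $F$ bounds a unique edge space over $\bar E_j$, this $s$ satisfies $sE_1=E_2$, so $s\delta_1$ is an excursion through $E_2$ of the same length as $\delta_1$. Replacing $\delta_1$ by $s\delta_1$ in $\gamma$ (and adjusting the flanking $F$-segments) yields a path whose two $\bar E_j$-excursions both use $E_2$, to which case (i) applies. The main obstacle is verifying that the translation cost --- the differences $d(\gamma(0),sp_1)-d(\gamma(0),p_1)$ and $d(sq_1,p_2)-d(q_1,p_2)$ in the flanking $F$-segments --- is outweighed by the case (i) savings of $|q_1^* q_1|_{E_2}+|p_2 p_2^*|_{E_2}$. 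This balance can be arranged because the representative $s$ is free modulo $\pi_1\bar E_j$, which translates along $L$ and thus allows one to minimise the $L$-direction adjustment, while the perpendicular offset $d_\perp(L_1,L_2)>0$ that originally contributed to $|\sigma_2|$ is absorbed after aligning both excursion endpoints onto the single line $L_2$.
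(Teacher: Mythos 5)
Your reduction of an excursion to a single edge strip via the Bass--Serre tree is correct, and your Case (i) shortening (rerouting through the opposite flat to save two strip crossings) is a valid argument \emph{when the two $\bar E_j$--excursions are consecutive in the decomposition and use the same edge}. But there are two genuine gaps. First, the path between the two $\bar E_j$--excursions need not be a segment $\sigma\subset F$: the excursion decomposition reads $\sigma_1\delta_1\sigma_2\cdots\delta_n\sigma_{n+1}$, and between your two $\bar E_j$--excursions there may sit further excursions over \emph{other} edges $\bar E_{j'}$. Your Case (i) rerouting through $F^{(1)}$ then has nothing to reroute, and your Case (ii) bookkeeping of ``the flanking $F$--segments'' does not apply. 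The paper's proof avoids this by writing $\gamma=\alpha\delta_1\beta\delta_2\eps$ with $\beta$ an \emph{arbitrary} intermediate subpath and translating the whole block $\delta_2$, $\beta$ by elements of $\stab_GF$ (which act by isometries on all of $X$), rather than rerouting anything.

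Second, Case (ii) is not proved: you explicitly defer the key step, namely that the cost of moving $\delta_1$ by $s$ (which perturbs the endpoints of the flanking segments by the full vector $s$, whose component perpendicular to $L$ equals the distance between $L_1$ and $L_2$ and can be arbitrarily large) is dominated by a saving of two strip widths. That inequality is not automatic, and the heuristic offered does not establish it. The paper's route sidesteps any quantitative comparison: the rearrangement $\gamma'=\alpha\delta_1\delta_2'\beta'\eps$ is \emph{length-preserving} ($|\gamma'|=|\gamma|$), and the contradiction comes from a purely local fact --- a concatenation of two $\bar E_j$--excursions at a common point of $F$ is never geodesic (after choosing the translate so both cross the same strip, the path backtracks across it) --- so $\gamma'$, and hence $\gamma$, is not geodesic. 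To repair your proposal you would need to (a) translate the entire intermediate chunk rather than a segment, and (b) replace the ``savings versus cost'' estimate of Case (ii) by such an exact exchange.
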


\begin{proof}
Let $\gamma$ be a path with two $\bar E_j$--excursions in its excursion decomposition of $\gamma$. Write $\gamma=\alpha\delta_1\beta\delta_2\eps$, where the $\delta_i$ are $\bar E_j$--excursions and $\alpha,\beta,\eps$ are subpaths of $\gamma$. There is a path $\gamma'=\alpha\delta_1\delta'_2\beta'\eps$ with the same endpoints as $\gamma$, where $\delta_2'$ and $\beta'$ are $\stab_GF$--translates of $\delta_2$ and $\beta$, respectively. We have $|\gamma'|=|\gamma|$. As $\delta_1\delta_2$ is a concatenation of $\bar E_j$--excursions, $\gamma'$ is not a geodesic, and hence nor is $\gamma$.
\end{proof}

%%%%%%%%%%%%%%%%%%%%%%%%%%%%%%
\subsection{Distortion and isoperimetry} \label{subsec:disorted}

Recall that a subspace $Y$ of a metric space $X$ is \emph{distorted} if the inclusion map $Y\to X$ is not a quasiisometric embedding when $Y$ is given the (discrete) path metric. Let us say that a tubular group is \emph{distorted} if one of its vertex groups is distorted. We shall prove that distorted tubular groups have super-quadratic Dehn functions. 

Let $\bar E\in\{\bar E_1,\dots,\bar E_m\}$, and let $F$ cover a vertex torus to which $\bar E$ is incident. An \emph{$\bar E$--line} in $F$ is the image in $F$ of an incident edge space $E$ covering $\bar E$. An $\bar E$--line is a biinfinite $F$--geodesic, but may be distorted in $X$. Note that, for fixed $\bar E$, any two $\bar E$--lines are isometric in $X$. An \emph{$\bar E$--segment} is a segment in an $\bar E$--line.

For a path $p$ in $X$, write $\i p$ and $\t p$ for the initial and terminal vertices of $p$, respectively. 

\begin{lemma}
\label{lem:distortedLine}
If $G$ is a distorted tubular group, then there exists a distorted $\bar E$--line in $X$.
\end{lemma}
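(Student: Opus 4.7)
The plan is to localise the distortion of a vertex flat $F$ into one of the excursions of an $X$-geodesic, and then to exploit the tree structure of the underlying graph of groups to force that excursion to witness distortion of an actual $\bar E$-line.

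Since $G$ is distorted, some vertex flat $F$ is distorted in $X$, so there exist $x_n, y_n \in F$ with $d_F(x_n, y_n)/d_X(x_n, y_n) \to \infty$. I would fix $X$-geodesics $\gamma_n$ from $x_n$ to $y_n$ and appeal to Lemma~\ref{lem:cutPaste} to obtain excursion decompositions $\gamma_n = \sigma_1^n \delta_1^n \cdots \delta_{k_n}^n \sigma_{k_n+1}^n$ with $k_n$ bounded by the number of edges of $\bar X$ incident to $\bar F$. After passing to a subsequence, I may assume $k_n = k$ is constant and all edge types of the excursions are fixed.

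The critical observation, and the main obstacle of the plan, is that the two endpoints of each excursion $\delta_j^n$ lie on a \emph{common} $\bar E$-line of $F$. The reason is that $\delta_j^n$ projects to a closed walk in the Bass--Serre tree of $\bar X$, based at the vertex $v_F$ corresponding to $F$, that meets $v_F$ only at its endpoints by definition of an excursion. In a tree, any such walk must enter and leave $v_F$ along the same edge, so the initial and terminal edge segments of $\delta_j^n$ traverse a single edge line $E_j^n \subset X$, placing $\i\delta_j^n$ and $\t\delta_j^n$ on the $\bar E$-line $L_j^n := \iota(E_j^n) \subset F$.

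Since each $\sigma_i^n \subset F$ satisfies $|\sigma_i^n|_F = |\sigma_i^n|_X$, this gives
\[
d_F(x_n,y_n) \le \sum_i |\sigma_i^n|_X + \sum_j d_{L_j^n}(\i\delta_j^n,\t\delta_j^n), \qquad d_X(x_n,y_n) = \sum_i |\sigma_i^n|_X + \sum_j |\delta_j^n|.
\]
The divergence of $d_F/d_X$ would then force $\sum_j d_{L_j^n}(\i\delta_j^n,\t\delta_j^n)$ to dominate $\sum_j |\delta_j^n|$, and pigeonholing on the bounded number of excursions would produce a fixed index $j$ and a subsequence along which the excursion $\delta^n := \delta_j^n$, with endpoints $p^n,q^n$ on a common $\bar E$-line $L^n$, satisfies $d_{L^n}(p^n,q^n)/|\delta^n| \to \infty$. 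As $d_X(p^n,q^n) \le |\delta^n|$, the line $L^n$ would be distorted in $X$. Translating by $\stab_G F$, which acts by isometries of $X$ and transitively on the $\bar E$-lines of $F$ of a fixed type, I would conclude that a fixed $\bar E$-line is distorted. The bulk of the argument is the tree-walk observation; the rest is routine bookkeeping.
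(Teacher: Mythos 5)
Your proof is correct and follows essentially the same route as the paper's: take geodesics witnessing the distortion of a vertex flat, apply the excursion decomposition and Lemma~\ref{lem:cutPaste} to bound the number of excursions, and pigeonhole to localise the distortion into a single excursion whose endpoints lie on a common $\bar E$--line. The only difference is presentational — you spell out the Bass--Serre tree argument for why an excursion enters and leaves $F$ along the same edge space (which the paper packages into its definition of an $\bar E_j$--excursion), and you arrange the pigeonholing slightly differently from the paper's explicit $\frac{1}{2m}$ estimate.
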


\begin{proof}
Let $F\subset X$ be a distorted flat. There is a sequence $(\gamma_i)$ of $X$--geodesics with endpoints in $F$ such that $\frac{|\gamma_i|_X}{\dist_F(\i\gamma_i,\t\gamma_i)}\to 0$. For each $i$, let $\gamma_i=\sigma^i_1\delta^i_1\dots \delta^i_{n_i}\sigma^i_{n_i+1}$ be the excursion decomposition of $\gamma_i$. By Lemma~\ref{lem:cutPaste}, we have $n_i\le m$ for all $i$. 

Since subpaths $\gamma_i$ are geodesics, we have $|\gamma_i|_X=\sum_j|\sigma^i_j|_F+\sum_j|\delta^i_j|_X$. In particular, the above convergence implies that $\sum_j|\sigma^i_j|_F<\frac12\dist_F(\i\gamma_i,\t\gamma_i)$ for all sufficiently large $i$. By the triangle inequality, $\dist_F(\i\gamma_i,\t\gamma_i)\le\sum_j|\sigma^i_j|_F+\sum_j\dist_F(\i\delta^i_j,\t\delta^i_j)$. Thus, for each sufficiently large $i$ there is some $k_i$ such that $\dist_F(\i\delta^i_{k_i},\t\delta^i_{k_i})\ge\frac1{2m}(\sum_j|\sigma^i_j|_F+\sum_j\dist_F(\i\delta^i_j,\t\delta^i_j))$. After passing to a subsequence and relabelling the $\bar E_j$, we may assume that $k_i=k$ and $\delta^i_k$ is an $\bar E_k$--excursion for all $i$. But now we compute
\[
\frac{|\delta^i_k|_X}{\dist_F(\i\delta^i_k,\t\delta^i_k)} 
    \,\le\, \frac{|\gamma_i|_X}{\dist_F(\i\delta^i_k,\t\delta^i_k)}
    \,\le\, \frac{2m|\gamma_i|_X}{\sum_j|\sigma^i_j|_F+\sum_j\dist_F(\i\delta^i_J,\t\delta^i_j)}
    \,\le\, \frac{2m|\gamma_i|_X}{\dist_F(\i\gamma_i,\t\gamma_i)} \,\to\, 0,
\]
and we conclude that $\bar E_k$--lines are distorted.
% 
% \hp{Zach proof:}
% We have $|\gamma_i|_X=\sum_j(|\sigma_{j,i}|_F+|\delta_{j,i}|_X)$ and $\dist_F(\i\gamma_i,\t\gamma_i)\leq \sum_j (|\sigma_{j,i}|_F+\dist_F(\i\delta_{j,i}, \t\delta_{j,i}))$. Since $\bar X$ is compact\hp{Or just by knowing we started with a tubular group}, there are only finitely many orbits of edge spaces incident at $F$. Combined with Lemma~\ref{lem:cutPaste}, this implies a uniform bound on the number of excursions in the excursion decomposition of any $\gamma_i$ \hp{the valence}. Consequently, there exists some $\bar E$ incident to $\bar F$ and $\epsilon>0$ so that $\delta_{j',i'}$ is the unique $\bar E$-excursion of $\gamma_{i'}$, and we have $\dist_F(\i\delta_{j',i'},\t\delta_{j',i'})/\sum_j (|\sigma_{j,i'}|_F+\dist_F(\i\delta_{j',i'},\t\delta_{j',i'}))>\epsilon$. We must have $|\delta_{j',i'}|_X/\dist_F(\i\delta_{j',i'},\t\delta_{j',i'})\to 0$ since $\sum_j(|\sigma_{j,i}|_F+|\delta_{j,i}|_X)/\sum_j (|\sigma_{j,i}|_F+\dist_F(\i\delta_{j,i}, \t\delta_{j,i}))\to 0$. Since $\dist_F(\i\delta_{j',i'},\t\delta_{j',i'})\to \infty$, the $\bar E$-lines in $F$ are distorted.
\end{proof}

We now define a labelled, directed graph that encodes the distortion caused by edge spaces. For each $i$, fix a basis for the vertex group $\stab_G\bar F_i$, and let $|\cdot|_i$ be the corresponding word norm.

\begin{definition}[Distortion graph]
The vertex set of $\Delta$ is the set of flats $\bar F_i$. There is a directed edge $e$ from $\bar F_{i_1}$ to $\bar F_{i_2}$ if and only if there is some distorted edge $\bar E_j$ whose endpoints are $\bar F_{i_1}$ and $\bar F_{i_2}$. This directed edge $e$ is given a label $\ell_e$ as follows. There exist $w_k\in\stab_G\bar F_{i_k}$ such that the edge $\bar E_j$ identifies $w_1$ with $w_2$. Set $\ell_e=\frac{|w_2|_{i_2}}{|w_1|_{i_1}}$.
\end{definition}

Note that all labels are rational. Also, if $e=uv$ is an edge of $\Delta$, then there is another edge $e'=vu$, with $\ell_{e'}=\frac1{\ell_e}$. We say that a directed cycle in $\Delta$ is \emph{balanced} if the product of the labels of its edges is 1.

\begin{lemma} \label{lem:balanced_undistortion}
If $G$ is distorted, then $\Delta$ has an unbalanced cycle.
\end{lemma}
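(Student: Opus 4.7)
The plan is to prove the contrapositive: assuming every directed cycle of $\Delta$ is balanced, show that every vertex flat is undistorted in $X$. The balanced-cycle hypothesis lets us define a function $\ell:V(\Delta)\to\R_{>0}$ by fixing a base vertex $v_0$ with $\ell(v_0)=1$ in each connected component of $\Delta$ and setting $\ell(v)$ to be the product of labels along any directed path from $v_0$ to $v$; balanced cycles ensure this is well-defined. Extend $\ell$ by $1$ to the remaining vertices of the underlying graph of groups, and rescale the word norm on each vertex group $\stab_G\bar F_i$ by the factor $\ell(i)^{-1}$. Since $\ell$ takes only finitely many values, the resulting metric $d'$ on the tree of spaces $X$ is bilipschitz equivalent to $d$, so it suffices to prove that $(X,d')$ has undistorted vertex flats.

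The defining feature of this rescaling is that for every distorted edge $\bar E_j$ between $\bar F_{i_1}$ and $\bar F_{i_2}$, the identified elements $w_1,w_2$ have matching rescaled norms, $|w_1|'_{i_1}=|w_2|'_{i_2}$, because $\ell_e=\ell(i_2)/\ell(i_1)$. Thus after rescaling, every distorted-edge inclusion is isometric on the identified cyclic subgroups.

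Suppose for contradiction that some vertex flat $F$ were $d'$-distorted. Applying Lemma~\ref{lem:distortedLine} in $(X,d')$ gives a $d'$-distorted $\bar E$-line in $F$, with arbitrarily long segments of length $L_n$ whose $d'$-geodesics $\gamma_n$ have length $o(L_n)$. By Lemma~\ref{lem:cutPaste}, the excursion decomposition of $\gamma_n$ has at most $m$ terms, so one extracts an excursion $\delta_n$ with $d'_F(\i\delta_n,\t\delta_n)\ge L_n/(2m)$ and $|\delta_n|_{d'}=o(L_n)$. A key observation is that each excursion must exit and re-enter $F$ through the \emph{same} edge of the Bass--Serre tree (since $F$ disconnects this tree and $\delta_n$ meets $F$ only at its endpoints). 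Hence $\i\delta_n$ and $\t\delta_n$ lie on a single $\bar E$-line in $F$, and the displacement between them is a pure shift by a power $w_*^{N_n}$, with $|N_n|\to\infty$.

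Projecting $\delta_n$ to a closed walk $\omega_n$ at $\bar F$ in the underlying graph of groups yields an element $g_n\in\stab_G\bar F$ that translates $F$ by $w_*^{N_n}$. Tracking how the cyclic subgroup $\sgen{w_*}$ is conjugated along $\omega_n$, each distorted-edge crossing multiplies the scale of the $\bar E$-line by the corresponding $\Delta$-label, while after rescaling each undistorted-edge crossing leaves the scale unchanged. Since $|w_*^{N_n}|'_F$ grows linearly in $|N_n|$ but $|\delta_n|_{d'}=o(|N_n|)$, the cumulative product of labels around $\omega_n$ cannot equal $1$. Decomposing $\omega_n$ into simple cycles and passing to $\Delta$, at least one simple cycle has label product $\ne 1$, contradicting our assumption. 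The main obstacle is this last combinatorial step: rigorously showing that distorted edges contribute their label while undistorted edges (after rescaling) contribute trivially to the multiplicative ``gear'' of the walk. This comes down to careful bookkeeping of the cyclic stabilizers along the Bass--Serre tree walk, for which the matched-norm property established above is the key ingredient.
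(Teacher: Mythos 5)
Your first half coincides with the paper's proof: pass to the contrapositive and use the balanced-cycle hypothesis to propagate a consistent scale over the vertices of $\Delta$ (your $\ell(v)$ is the paper's $N_i$, obtained from a spanning tree) so that every edge of $\Delta$ becomes ``matched''. From that point on you abandon the paper's route, and the replacement argument has a genuine gap --- one you yourself flag as ``the main obstacle''. The paper simply builds the rescaled path-metric space $X'$ (each flat covering $\bar F_i$ rescaled by $N_i$, edge-strips of thickness $1$), observes that vertex flats are convex, hence undistorted, in $X'$, and that $X$--geodesics are coarsely Lipschitz as paths in $X'$; chaining $\dist_F\lesssim\dist'\lesssim\dist$ then gives undistortedness in $X$ directly, with no need to re-run the excursion analysis inside the rescaled space.

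The step you defer --- that the ``gearing'' of the closed walk $\omega_n$ forces an unbalanced cycle \emph{of $\Delta$} --- does not go through as stated, for two reasons. First, $\Delta$ only has edges for the \emph{distorted} $\bar E_j$, whereas $\omega_n$ traverses arbitrary edges of the underlying graph of groups. An undistorted edge can perfectly well identify elements $w_1,w_2$ with $|w_1|_{i_1}\ne|w_2|_{i_2}$ (e.g.\ an amalgam $\Z^2\ast_{\Z}\Z^2$ gluing $a_1$ to $a_2b_2$); your rescaling does nothing to such an edge, so it does \emph{not} ``contribute trivially to the multiplicative gear'', and the sub-walk of $\omega_n$ consisting of $\Delta$--edges need not close up into cycles of $\Delta$ at all. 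Second, the scale-tracking itself does not typecheck: at an intermediate flat the incoming and outgoing edge groups are in general non-commensurable cyclic subgroups, so the displacement $w_*^{N_n}$ is not carried along $\omega_n$ as a power of successive edge generators, and there is no single quantity whose logarithm is additive along the walk. Repairing this would essentially amount to proving that any cycle of the graph of groups producing unbounded compression consists entirely of distorted edges --- a bootstrapping claim that is the real content of your final step and that the sketch does not supply. I would recommend replacing the second half with the paper's convexity-and-comparison argument, which your matched-norm observation already sets up.
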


\begin{proof}
Suppose that all cycles in $\Delta$ are balanced. By proceeding along a spanning tree of $\Delta$, one can label the vertices of $\Delta$ with positive integers $N_i$ in such a way that for each directed edge $e\subset \Delta$ from $\bar F_{i_1}$ to $\bar F_{i_2}$ we have $\ell_e=\frac{N_{i_2}}{N_{i_1}}$. 

We construct a metric space $X'$ with underlying set $X$. For points $x$ and $y$ in a flat covering $\bar F_i$, let $D(x,y)=N_i\dist_F(x,y)$. Set the thickness of the edge-strips to be 1. Let $X'=(X,\dist')$ be the path-metric space induced from the partially-defined function $D$. Note that by the choice of the $N_i$, every vertex-flat of $X'$ is convex, and in particular undistorted. On the other hand, given an $X$--geodesic between points $x$ and $y$ of a flat $F$, by viewing it as a union of segments in edge-strips and segments in vertex-flats we see that it is a coarsely Lipschitz path in $X'$. Hence $\dist'(x,y)$ is coarsely bounded above by $\dist(x,y)$. But this shows that $\dist(x,y)$ is coarsely lower-bounded by $\dist_F(x,y)$. We obtain a contradiction, as we have shown $X$ is quasiisometric to $X'$, whose flats are obviously undistorted.
\end{proof}

To facilitate understanding the Dehn function of $G$, we first consider the case where each flat of $X$ has at most one parallelism class of distorted $\bar E$--lines. In general this is not the same as there being only one $j$ for which there are distorted $\bar E_j$--lines, but the graph $\Delta$ can still be used to find that some $\bar E$--lines are highly distorted.

\begin{proposition} \label{prop:parallel_exponential}
Let $G$ be a distorted tubular group. If each flat of $X$ has at most one parallelism class of distorted lines, then $G$ has exponential Dehn function.
\end{proposition}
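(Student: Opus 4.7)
The plan is to extract a Baumslag--Solitar relation from an unbalanced cycle in the distortion graph $\Delta$, and then to exhibit loops of linear length whose fillings must use exponential area. First, I would apply Lemma~\ref{lem:balanced_undistortion} to obtain an unbalanced directed cycle $C$ in $\Delta$, with product of labels $\ell$ satisfying $|\ell|>1$ (after possibly reversing orientation). Lifting $C$ to a chain of flats $F_0,F_1,\ldots,F_s=gF_0$ in $X$ joined by distorted edge-strips, one obtains a translation element $g\in G$ associated to $C$. The parallelism hypothesis forces the distorted $\bar E$--lines in each $F_i$ to share a common direction, with primitive generator $v_i\in\stab F_i$. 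Composing the edge identifications $v_i^{p_i}=v_{i+1}^{q_i}$ along $C$ gives $v_0^P=v_s^Q$ in $G$ with $Q/P=\ell$. Since $g$ is an isometry preserving the (unique) distorted direction in each flat, one has $gv_0g^{-1}=v_s$ after passing to $g^2$ to fix signs, yielding
\[
g v_0^Q g^{-1} \;=\; v_0^P
\]
with $|Q|>|P|$. After passing to appropriate powers, this becomes a relation $hwh^{-1}=w^n$ for some $w\in\langle v_0\rangle$, some $h\in\langle g\rangle$, and some integer $n$ with $|n|>1$; in particular $w$ is exponentially distorted in $G$.

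Next, I would choose a primitive element $u\in\stab F_0$ independent of $v_0$. By the parallelism hypothesis, no $\bar E$--line in the direction of $u$ is distorted, so $u$ is undistorted in $G$. Since $[u,v_0]=1$ and $h^Nwh^{-N}$ represents $w^{n^N}$ in $G$, the words
\[
\gamma_N \;=\; u \cdot (h^N w h^{-N}) \cdot u^{-1} \cdot (h^N w^{-1} h^{-N})
\]
are null-homotopic of length $O(N)$ in a fixed finite generating set. To bound $\Area(\gamma_N)$ from below I would consider a minimum-area van Kampen diagram $D_N$ built from the natural graph-of-groups presentation of $G$ and run a $u$--corridor between its two boundary $u$--edges. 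The parallelism hypothesis guarantees that $u$ appears in no distorted-edge relation, so the 2--cells of the corridor are either vertex-group commutator cells $[u,v_0]=1$ (each moving $u$ past one $v_0$--edge) or cells from undistorted edge identifications, which contribute only a bounded multiplicative constant. The corridor must therefore absorb all $v_0$--letters across its top side, of which there are at least $|n|^N$, yielding $\Area(\gamma_N)\succeq|n|^N$. Combined with the standard upper bound $\delta_G\preceq\exp$ (by induction on word length in the graph-of-groups presentation), this gives $\delta_G\simeq\exp$.

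The main obstacle will be making the $u$--corridor argument rigorous. The element $h$ is a word in the stable letters of possibly several different edges, and $u$ need not literally commute with each such letter. One must verify that every time the corridor meets a stable letter, the only way to continue is via either a commutator relation or an undistorted edge identification, as opposed to a ``shortcut'' through a distorted identification that could collapse the $v_0$--count across the top. The parallelism hypothesis is decisive here, since it rules out exactly the configurations---distorted edge identifications involving $u$---in which such a shortcut could occur.
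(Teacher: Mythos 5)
Your first step (Lemma~\ref{lem:balanced_undistortion} plus the parallelism hypothesis to align the distorted directions $v_i$ along the unbalanced cycle, yielding $g v_0^{Q}g^{-1}=v_0^{P}$ with $|P|\ne|Q|$) matches the paper's starting point. But the pivot to a relation $hwh^{-1}=w^{n}$ is a genuine gap. From $gv_0^{Q}g^{-1}=v_0^{P}$ you cannot in general ``pass to powers'' to make one exponent equal to $1$: for $h=g^{N}$ and $w=v_0^{k}$ one only gets $g^{N}v_0^{Q^{N}m}g^{-N}=v_0^{P^{N}m}$, so the exponent ratio is always $(P/Q)^{N}$, never an integer unless $Q\mid P$. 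A concrete counterexample is the tubular group $\langle a,b,t\mid [a,b]=1,\ ta^{2}t^{-1}=a^{3}\rangle$ (one $\Z^2$ vertex, one HNN edge), where each flat has a single parallelism class of distorted lines and the relation is irreducibly of $\mathrm{BS}(2,3)$ type. In that group $\langle a\rangle$ is only \emph{polynomially} distorted (degree $\log_2 3$), so your claim that $w$ is exponentially distorted is false, and the loops $\gamma_N=u\,(h^Nwh^{-N})\,u^{-1}\,(h^Nw^{-1}h^{-N})$ do not exist as linear-length null-homotopic words, since $h^Nwh^{-N}$ is not a power of $w$. The obstacle you flag at the end (rigorising the $u$--corridor) is therefore not the real difficulty; the argument breaks one step earlier, and the corridor count of ``at least $|n|^N$ letters across the top'' has no analogue when both exponents exceed $1$.

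The paper circumvents this entirely. It uses the one-parallelism-class hypothesis not to build a Baumslag--Solitar relation but to find a single vertex flat $\bar F$ on the unbalanced cycle where the incoming and outgoing gluings are distinct powers of the same element, the outgoing one a proper power; this forces \emph{two} conjugate edge-strips to attach to a flat $F$ along the same line. Iterating the cycle then produces, for far-apart $x,y\in F$, two paths $p_{1,n},p_{2,n}$ from $x$ to $y$ through distinct branches of the Bass--Serre tree, each of length roughly $2n+\alpha^{n}\dist_F(x,y)$ with $\alpha<1$. Taking $n\sim\log\dist_F(x,y)$ gives a loop of logarithmic length bounding an embedded disc containing at least $\dist_F(x,y)$ two-cells; since $X$ is a contractible $2$--complex, any filling contains this embedded disc, giving the exponential lower bound without any corridor or distortion estimate. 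If you want to salvage your route, you would need the (true but nontrivial) fact that $\mathrm{BS}(P,Q)$ with $|P|\ne|Q|$, $|P|,|Q|>1$ has exponential Dehn function, together with an argument that the relevant fillings in $G$ restrict to fillings in that subgroup--neither of which is supplied by the proposal as written.
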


\begin{proof}
By Lemma~\ref{lem:balanced_undistortion}, there is an unbalanced cycle in $\Delta$, and hence an unbalanced embedded cycle $\gamma\subset\Delta$. Let $\alpha$ denote the product of the labels on the edges of $\gamma$. Perhaps after reversing $\gamma$, we have $\alpha<1$. Since there is only one parallelism class of distorted lines in each flat, there must be some vertex-flat $\bar F$ of $\gamma$ in which the incoming edge-gluing and the outgoing edge-gluing are (conjugates of) distinct powers of the same group element, with the outgoing one being a proper power. In $X$, this ensures that there are two conjugate edge-strips glued along the same outgoing $\bar E$--line. See Figure~\ref{fig:BScover}. After relabelling, the edges of $\gamma$ are $\bar E_1,\dots,\bar E_n$, and the initial flat is $\bar F$.

\begin{figure}[ht] 
\includegraphics[width=8cm, trim = 0 6mm 0 4mm]{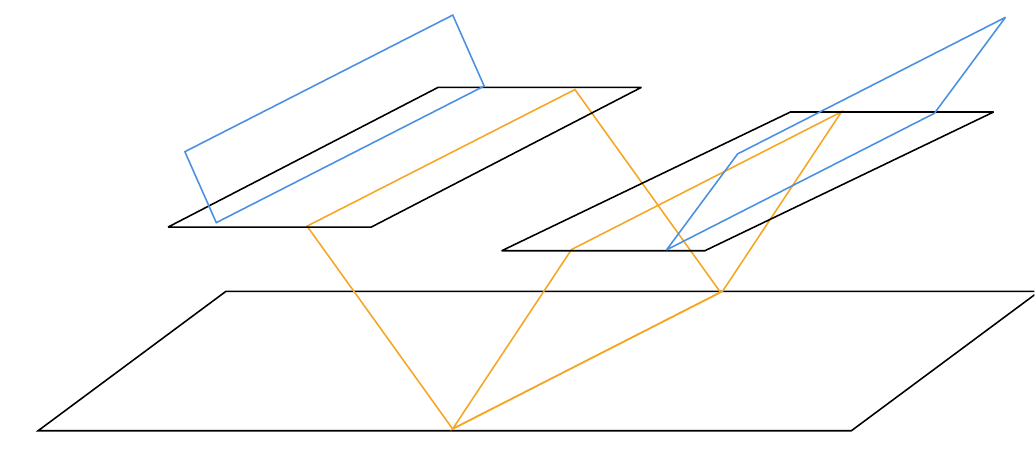} 
\caption{Two conjugate edge-strips in orange glued along an $\bar E$--line.} \label{fig:BScover} 
\end{figure}

Let $x$ and $y$ be sufficiently far-apart points lying in an $\bar E_1$--line inside a flat $F$ of $X$ covering $\bar F$. For each $n\ge0$, our choice of $\bar F$ means that $\gamma^n$ gives at least two sequences of vertex flats of $X$, starting with $F$ and ending with $F_{1,n}$ and $F_{2,n}$, respectively, which are translates of $F$. Moreover, the paths in the Bass--Serre tree of $G$ from $F$ to the $F_{i,n}$ meet only in $F$.  Among all points in $F_{i,n}$ lying in $\bar E_1$--lines, let $z_{i,n}^-$ be a closest such point to $x$. Define $z_{i,n}^+$ similarly with $y$. In particular, $z_{i,0}^-=x$ and $z_{i,0}^+=y$.

Let $p_{i,n}$ be the path from $x$ to $y$ consisting of: a geodesic $\delta_{i,n}^-$ from $x$ to $z_{i,n}^-$; the affine path in $F_{i,n}$ from $z_{i,n}^-$ to $z_{i,n}^+$; and a geodesic $\delta_{i,n}^+$ from $z_{i,n}^+$ to $y$. The length of $p_{i,n}$ is $2n+\alpha^n\dist_F(x,y)$. When $n\sim \log\dist_F(x,y)$, this is coarse-linearly equivalent to $\log\dist_F(x,y)$. 

Together, $p_{1,n}$ and $p_{2,n}$ form a loop of length coarse-linearly equivalent to $\alpha^n\dist_F(x,y)$, and they bound an embedded disc with at least $\dist_F(x,y)$ 2--cells. See Figure~\ref{fig:BSdiagram}. 

\begin{figure}[ht] 
\includegraphics[width=13cm, trim = 0cm 1cm 0cm 12mm]{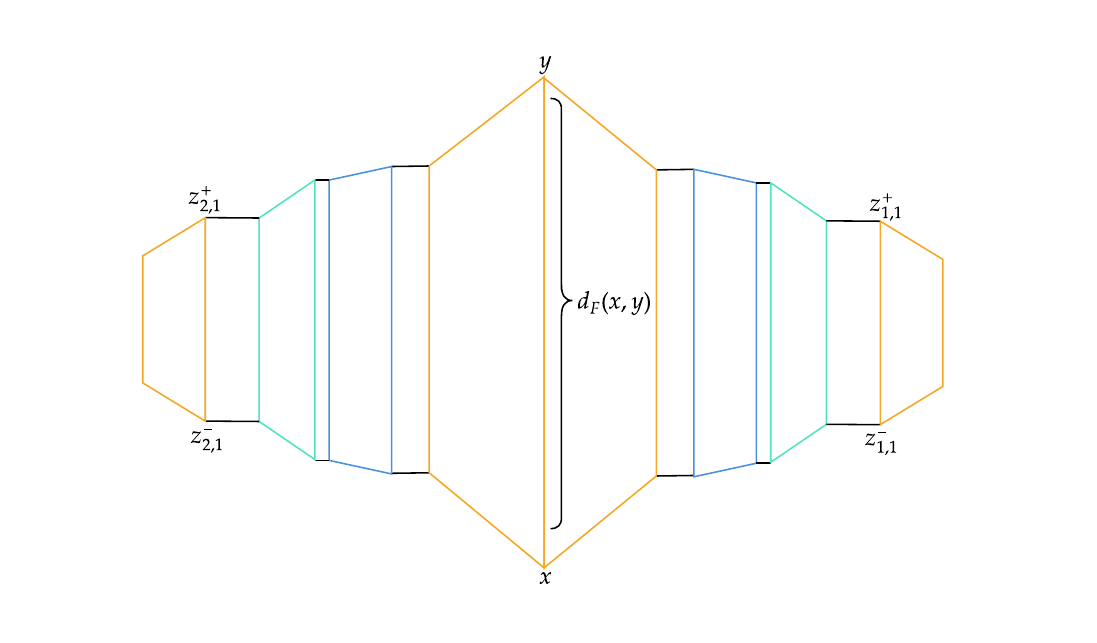} 
\caption{A example diagram where $\gamma$ is length three and $n=1$.} \label{fig:BSdiagram} 
\end{figure}
Because $X$ is a contractible 2--complex, the fact that the disc is embedded means that any disc with the same boundary must have at least as many 2--cells. In particular, by considering diagrams with $\dist_F(x,y)\to~\infty$, we find that $G$ has exponential Dehn function.
\end{proof}

\begin{theorem} \label{thm:distortion}
Distorted tubular groups have super-quadratic Dehn function.
\end{theorem}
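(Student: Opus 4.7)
The plan splits along the number of parallelism classes of distorted lines in the vertex flats of $X$. If no vertex flat contains two non-parallel distorted $\bar E$--lines, then Proposition~\ref{prop:parallel_exponential} already delivers an exponential Dehn function, which is certainly super-quadratic. Otherwise, fix a vertex flat $F\subset X$ together with distorted lines in two non-parallel directions $v_1,v_2\in F$, with $v_i$ parallel to some $\bar E_i$--line. Let $g_i:\mathbf N\to\mathbf N$ denote the distortion function, so that points on an $\bar E_i$--line at $F$--distance $n$ are at $X$--distance $g_i(n)=o(n)$.

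For each large $n$, I would form a loop $\gamma_n$ by taking the boundary of the parallelogram $P_n\subset F$ spanned by $nv_1$ and $nv_2$ and replacing each side parallel to $v_i$ by a shortcut through an adjacent $\bar E_i$--edge strip. This gives
\[
|\gamma_n|_X \;\le\; 2g_1(n)+2g_2(n)+O(1) \;=\; o(n),
\]
while $P_n$ has area $\Theta(n^2)$ inside $F$. The main step is the area lower bound $\Area(\gamma_n)\ge cn^2$. For this I would construct a Lipschitz retraction $\rho\colon X\to F$ inductively along the Bass--Serre tree: each edge strip incident to $F$ collapses onto its $\bar E$--line inside $F$ via the cyclic edge gluing, and each non-base vertex flat is then collapsed through its next-level incident strip. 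One verifies that each $2$--cell of the Cayley $2$--complex of $G$ has image of uniformly bounded area in $F$, and that $\rho(\gamma_n)$ coincides with $\partial P_n$ up to bounded error. Given any van Kampen filling $D$ of $\gamma_n$, the image $\rho(D)$ is a singular disc in $F$ with boundary near $\partial P_n$, whose $F$--area counted with multiplicity is at most $C\cdot\Area(D)$; a degree argument on interior points of $P_n$ then gives $\Area(D)\ge cn^2$.

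Combining $|\gamma_n|_X=o(n)$ with $\Area(\gamma_n)\ge cn^2$ and substituting $m=|\gamma_n|_X$ yields $\delta_G(m)\gtrsim\bigl((g_1+g_2)^{-1}(m)\bigr)^2$, which is strictly super-quadratic. The principal obstacle is controlling the retraction $\rho$: since edge strips can be glued along nested or conjugate $\bar E$--lines, the iterative collapse can fold distant portions of $X$ together and threaten to spoil the degree computation through cancellations. A workable fallback avoids the geometric retraction by passing to an abelianised coefficient system, where $P_n$ represents a nonzero class in a suitable relative $H_2$, forcing any filling to pair nontrivially with a dual $2$--cocycle supported on $F$ and delivering the same quadratic lower bound.
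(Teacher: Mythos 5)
Your overall strategy coincides with the paper's: the same case split (one parallelism class of distorted lines per flat, handled by Proposition~\ref{prop:parallel_exponential}, versus two non-parallel distorted lines in a single flat $F$), followed by a quadrilateral in $F$ whose sides are segments of distorted $\bar E_1$-- and $\bar E_2$--lines and whose boundary is shortcut through excursions, giving sublinear perimeter against quadratic area. The only real divergence is in how the area lower bound is justified, and here the paper is much more economical: since $X$ is a contractible $2$--complex, a $2$--chain with prescribed boundary is unique (any difference of two fillings is a $2$--cycle, hence zero), so every van Kampen diagram for the loop must contain all the $2$--cells of the embedded rhombus $R_i$ with multiplicity, and the bound $\Area\ge\Area(R_i)$ is immediate. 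Your ``fallback'' via a dual $2$--cocycle supported on $F$ is essentially this observation in disguise and is the argument you should lead with. Your primary route (a retraction $\rho\colon X\to F$ plus a degree count) can in fact be made to work --- for a tree of spaces one collapses each component of $X\ssm F$ onto the $\bar E$--line along which its closure meets $F$, so $2$--cells outside $F$ have image of zero area and the retracted excursions stay inside the lines extending the sides of $P_n$, which preserves the winding number about interior points off those lines --- but it is needless machinery, and your own worry about foldings is resolved precisely because the retraction is defined via the Bass--Serre tree rather than via the configuration of lines inside $F$.

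One imprecision worth fixing: distortion of an $\bar E_i$--line does not give a function $g_i$ with $g_i(n)=o(n)$ for \emph{all} $n$; it only gives a sequence of pairs of points with $\dist_X/\dist_F\to0$. The paper therefore works with sequences of excursions $(\delta^j_i)$ witnessing the distortion and builds the rhombus from the chords $\sigma^j_i$ of those particular excursions, rather than fixing side length $n$ in advance. This still suffices, since super-quadraticity of the Dehn function only requires a sequence of loops violating every quadratic bound, but as written your estimate $|\gamma_n|_X\le 2g_1(n)+2g_2(n)+O(1)=o(n)$ and the final substitution $m=|\gamma_n|_X$ implicitly assume a uniform $o(n)$ bound and monotonicity of $g_i$ that you have not established.
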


\begin{proof}
Let $G$ be a distorted tubular group. By Lemma~\ref{lem:distortedLine}, $X$ has distorted $\bar E$-lines. If each flat of $X$ has at most one parallelism class of distorted $\bar E$--lines, then Proposition~\ref{prop:parallel_exponential} shows that $G$ has exponential Dehn function. Otherwise there is a flat $F\subset X$ with non-parallel, distorted $\bar E_1$-- and $\bar E_2$--lines. 

For $j=1,2$, let $(\delta^j_i)$ be a sequence of increasingly long $X$--geodesics that are $\bar E_j$--excursions from $F$ and witness the distortion of $\bar E_j$. Let $\sigma^j_i$ be the affine path in $F$ from $\i\delta^j_i$ to $\t\delta^j_i$. For each $i$, one can form a rhombus in $R_i\subset F$ from two translated copies of $\sigma^1_i$ and two translated copies of $\sigma^2_i$. The area of $R_i$ is quadratic in its perimeter $P_i=2|\sigma^1_i|+2|\sigma^2_i|$. By the choice of the $\delta^j_i$, this is super-quadratic in $|\delta^1_i|+|\delta^2_i|$

Taking the same translations of the $\delta^j_i$ gives a loop $L_i$ in $X$ meeting $R_i$ in exactly four points. Since $X$ is a contractible 2--complex, any disc filling $L_i$ must contain the embedded disc $R_i$, and hence have area that is super-quadratic in its perimeter. Thus $G$ has no quadratic isoperimetric function.
\end{proof}

By contrast, it is easy to see that if all vertex groups of a tubular group $G$ are undistorted, then $G$ has quadratic Dehn function. If $G$ has no distorted elements, then it has no distorted vertex groups.

% By contrast, Lemma~\ref{lem:distortedLine} shows that if $G$ has no distorted elements, then it has no distorted vertex groups. Each loop $\gamma\subset X$ is contained in a finite union $X'$ of uniformly undistorted quasiflats and edge-strips, and the fact that vertex groups are undistorted means that the inclusion of $X'$ in $X$ is a uniform quasiisometric embedding. Since $X'$ is a finite union of uniformly undistorted flats and strips along uniformly undistorted lines, the loop $\gamma$ can be filled with quadratic area in $X'$, hence in $X$.
% See \cite[Prop.~3.2]{bradybridson:there} for a bound you could imitate

%%%%%%%%%%%%%%%%%%%%%%%%%%%%%%
\subsection{Coarse median tubular groups}

Here we use the results of Section~\ref{subsec:disorted} together with the construction of RBFs in undistorted tubular groups to prove Theorem~\ref{mthm:tubular_special}, characterising which tubular groups have coarse medians. 

\begin{lemma} \label{lem:three_edges}
Let $G$ be an undistorted tubular group. If some vertex group has three commensurability classes of incident edge groups, then $G$ has a quasiisometrically embedded 2--RBF.
\end{lemma}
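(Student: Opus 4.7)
The plan is to take the vertex flat $F$ covering the given vertex torus as the base flat of the desired 2-RBF, to use the three commensurability classes of edge groups to produce three transverse families of parallel edge-lines in $F$, and to attach a half-flat to $F$ along each edge-line via the associated edge strip and an adjacent vertex flat. The quasiisometric embedding of the resulting configuration into $X$ will come from the Bass--Serre tree structure together with the undistortion hypothesis.

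First I would fix representatives $\bar E_{j_0},\bar E_{j_1},\bar E_{j_2}$ of the three commensurability classes incident to $\bar F$, and let $u_i\in F$ denote the direction of a corresponding $\bar E_{j_i}$-line (after identifying $F\cong\R^2$). Non-commensurability of the edge groups translates to pairwise linear independence of the $u_i$. The translates of any $\bar E_{j_i}$-line by $\stab_G F\cong\Z^2$ form a parallel family of $\bar E_{j_i}$-lines in $F$, indexed by the cyclic coset space $\stab_G F/\langle \bar E_{j_i}\rangle$; projecting to the line through the origin orthogonal to $u_i$ yields an arithmetic progression, hence a coarsely dense set $P_i\subset\R$ of positions. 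Taking $v_i$ perpendicular to $u_i$ places this data into the form required by Definition~\ref{def:rbf}.

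Next, for each edge-line $L\subset F$ in one of these three families, let $E_L$ be the incident edge strip. Its other boundary is an edge-line in an adjacent vertex flat $F_L'$, and choosing a closed half of $F_L'$ bounded by that line and attaching it to $E_L$ gives a half-flat $H_L\subset X$ meeting $F$ precisely along $L$. Assembling these yields a natural map $f\colon R\to X$ from the abstract 2-RBF $R$ with base $F$, directions $v_0,v_1,v_2$, and position sets $P_i$: it sends the base flat isometrically onto $F$ and each abstract half-flat onto the corresponding $H_L$.

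The main obstacle is showing that $f$ is a quasiisometric embedding; the map is clearly coarsely Lipschitz. The key input is the Bass--Serre tree $T$ of $G$: any $X$-geodesic between points in two distinct attached half-flats, or between a point in one attached half-flat and a point in the base, projects to a geodesic in $T$ that must pass through the vertex corresponding to $F$. I would use this to decompose such a geodesic into at most three subpaths, each lying in a single vertex flat (or edge strip) of $X$. Undistortion of $G$ ensures that each of these pieces has length comparable to the distance between its endpoints in the ambient vertex flat, which in turn is comparable to the corresponding distance measured in $R$. Summing these estimates yields a uniform linear lower bound $d_X(f(x),f(y))\ge c\,d_R(x,y)-c'$, completing the verification.
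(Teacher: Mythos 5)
Your proposal is correct and follows essentially the same route as the paper: take the vertex flat $F$ as the base flat, use the three pairwise non-commensurable edge groups to get three pairwise non-parallel families of edge-lines, attach along each a rough half-flat consisting of the edge-strip together with a half-flat in the adjacent vertex space, and invoke undistortion for the quasiisometric embedding. The paper's proof is terser, leaving the coarse density of the gluing positions and the Bass--Serre-tree decomposition of geodesics implicit, whereas you spell these out; there is no substantive difference in method.
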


\begin{proof}
Let $X$ be the tree of spaces for $G$, and let $F$ be a flat stabilised by a vertex group as in the assumption. Let $\bar E_1,\bar E_2,\bar E_3$ be the images of three pairwise non-commensurable incident edges groups in $\bar F$. For each $i$ and each line $E_i\subset F$ covering $\bar E_i$, there exists a rough half-flat attached to $F$ along $E_i$ which is the union of an edge-strip together with a half-flat in the vertex-space on the other end of the strip. As $G$ is undistorted, this yields a $2$--RBF in $X$, and hence in $G$.
\end{proof}

\begin{theorem} \label{thm:tubular_special}
Let $G$ be a tubular group. If $G$ admits a coarse median, then $G$ is cocompactly cubulated and virtually compact special.
\end{theorem}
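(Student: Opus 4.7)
The plan is to combine the obstructions developed in the preceding sections with the positive cubulation results of Wise~\cite{wise:cubular}. Suppose $G$ admits a coarse median; the goal is to derive from this two structural consequences --- that $G$ is undistorted and that each vertex group has at most two commensurability classes of incident edge groups --- which together place $G$ squarely in Wise's cubulation regime.

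First, I would argue that $G$ has no distorted vertex subgroups. Every finitely generated coarse median group of finite rank satisfies a quadratic isoperimetric inequality, by Bowditch's foundational work on coarse median spaces. The contrapositive of Theorem~\ref{thm:distortion} then forces $G$ to be undistorted. Second, since a tubular group is the fundamental group of a $2$--dimensional aspherical graph of spaces, $\vcd G\le 2$, and Corollary~\ref{thm:dimension} yields $\rk G\le 2$. If some vertex group had three pairwise non-commensurable incident edge groups, Lemma~\ref{lem:three_edges} --- now applicable thanks to the undistortedness from the first step --- would produce a quasiisometrically embedded $2$--RBF in $G$, directly contradicting Theorem~\ref{thm:rbf}. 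Hence each vertex group has at most two commensurability classes of incident edge groups.

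Finally, I would invoke Wise's characterisation of cubular tubular groups. The combination of undistorted vertex groups together with at most two commensurability classes of incident edges at each vertex places $G$ in the class for which Wise's criterion in~\cite{wise:cubular} produces an equitable collection of walls, and hence a proper cocompact action on a CAT(0) cube complex. Wise further shows in the same paper that every cocompactly cubulated tubular group is virtually compact special, which gives both conclusions at once.

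The hardest step will be the appeal to Wise's criterion. The tension is that once the number of commensurability classes per flat drops from three to two, constructing an equitable wall system in each vertex flat that is compatible with identifications along edge strips is delicate; one must choose two transverse wall directions per flat and verify equitability globally. This work, however, is already carried out in~\cite{wise:cubular}, so once undistortedness and the ``at most two classes'' condition are established, the remainder of the argument is essentially a bookkeeping exercise assembling ingredients from Sections~\ref{sec:dimension}, \ref{sec:rbf}, and~\ref{subsec:disorted}.
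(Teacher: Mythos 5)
Your proposal is correct and follows essentially the same route as the paper: quadratic isoperimetry from Bowditch plus Theorem~\ref{thm:distortion} to get undistortedness, Corollary~\ref{thm:dimension} to get $\rk G\le 2$, Lemma~\ref{lem:three_edges} with Theorem~\ref{thm:rbf} to bound the commensurability classes, then Wise's \cite{wise:cubular}. The only cosmetic difference is that the paper phrases the input to Wise's Corollaries~5.9--5.10 via the absence of $\operatorname{BS}(m,n)$ subgroups with $m\ne\pm n$ (a consequence of undistortedness), rather than via undistortedness directly.
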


\begin{proof}
According to \cite[Cor.~8.3]{bowditch:coarse}, if $G$ admits a coarse median then it has a quadratic isoperimetric function, and so Theorem~\ref{thm:distortion} shows that $G$ must be undistorted. Moreover, $G$ has geometric dimension two since it is a graph of two dimensional groups (see \cite[Prop.~3.6]{scottwall:topologicalmethods}), so Corollary~\ref{thm:dimension} shows that $\rk G\le2$. In particular, Theorem~\ref{thm:rbf} implies that $G$ cannot have a quasiisometrically embedded 2--RBF. By Lemma~\ref{lem:three_edges}, this means that no vertex group of $G$ can have more than two commensurability classes of incident edge groups. Since $G$ is undistorted, it contains no Baumslag--Solitar subgroups $\operatorname{BS}(m,n)$ with $m\ne\pm n$. Consequently, \cite[Cor~5.10,~5.9]{wise:cubular} implies that $G$ is cocompactly cubulated and virtually compact special.
\end{proof}

% By a result of Woodhouse \cite{woodhouse:classifying:virtually}, it follows that, for tubular groups, admitting a coarse median is equivalent to the existence of a free action on a locally finite CAT(0) cube complex. 
Using Wise's characterisation \cite{wise:cubular}, there are tubular groups that are freely cubulable but do not admit coarse medians.

\begin{example}[A freely cubular, tubular, non-quasicubular group]
\label{eg:c6tubular}
Let $T$ be the $1$--skeleton of a tetrahedron, and 3--edge-colour $T$ with colours $g,p,b$. Up to an isomorphism of $T$, there is one way to do this. Consider the tubular group $G$ on $T$ where $G_{e^\pm}$ is generated by: $(1,0)$ if $e$ has colour $g$; $(0,1)$ if $e$ has colour $p$; and $(1,-1)$ if $e$ has colour $b$.

It is easy to see from Wise's characterisation that $G$ acts freely on a $\mathrm{CAT}(0)$ cube complex. From work of Woodhouse \cite{woodhouse:classifying:virtually}, it follows that $G$ is virtually special. 

Moreover, the choice of edge-inclusions makes $X$ a CAT(0) space. However, since $G$ is undistorted and contains a vertex with three incident, non-commensurable edge groups, Lemma~\ref{lem:three_edges} implies $G$ has no coarse median (and in particular is not quasicubical). Groups such as $G$ can be viewed as ``opposite'' to hyperbolic groups with property (T), which are quasicubical yet do not act freely on any $\mathrm{CAT}(0)$ cube complex.

It is additionally possible to give a $C(6)$ structure to the graph of spaces associated to $G$. Each vertex space is a torus formed from a four-by-four grid of hexagons by identifying appropriate boundary edges. We then subdivide the edges, turning each hexagon to a dodecagon. Each edge annulus is formed from four hexagons, with edges subdivided. The attaching maps at each vertex are indicated in Figure~\ref{fig:c6tubular}. It can be verified by inspection that the cell structure is $C(6)$. 
\end{example}

\begin{figure}[ht] 
\includegraphics[width=9cm, trim = 0 3mm 0 3mm]{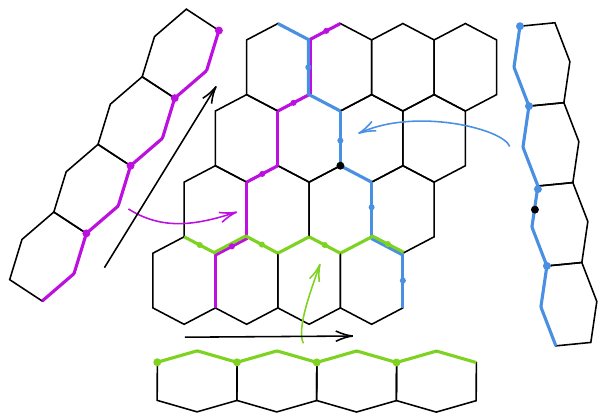} 
\caption{A $C(6)$ structure on a torus and three incident edge groups. The torus is a quotient of the hexagonal plane by translations along the black arrows, with each edge subdivided. The four-by-four hexagonal grid is a fundamental domain. The green, purple, and blue edge groups can be identified with the subgroups $\langle(1,0)\rangle$, $\langle(0,1)\rangle$, and $\langle(1,-1)\rangle$, respectively. The attaching maps send the bold vertices to bold vertices.} \label{fig:c6tubular} 
\end{figure}

There is some overlap between Theorem~\ref{thm:tubular_special} and Theorem~\ref{thm:two_linear}. By combining \cite[Cor.~4.17]{behrstockdrutu:divergence} and \cite[Thm~1.2]{macura:detour}, it can be seen that any tubular group that is free-by-cyclic is linearly growing. For such groups, one should then compare Proposition~\ref{prop:few_linear} and Theorem~\ref{thm:two_linear} with \cite[Cor.~5.10]{wise:cubular}. Wise's results and the interplay between free-by-cyclic and tubular groups have also been exploited in \cite{wuye:some} to show that certain CAT(0) free-by-cyclic groups produced by Lyman \cite{lyman:some} are virtually special but not virtually cocompactly cubulated.

\bibliographystyle{alpha}
\bibliography{bibtex}
\end{document}